\theoremstyle{plain}
\newtheorem{propn}{Proposition}[section]
\newtheorem{thm}[propn]{Theorem}
\newtheorem{lemma}[propn]{Lemma}
\newtheorem{cor}[propn]{Corollary}
\theoremstyle{definition}
\newtheorem*{defn}{Definition}
\newtheorem*{caution}{Caution}
\newtheorem*{example}{Example}
\newtheorem*{examples}{Examples}
\theoremstyle{remark}
\newtheorem*{rem}{Remark}
\newtheorem*{rems}{Remarks}
\newtheorem*{notn}{Notation}
\newcommand{\ve}{\varepsilon}
\newcommand{\noisetwo}{\noise^{\oplus 2}}
\newcommand{\Kiltwo}{\Kil^{\oplus 2}}
\newcommand{\colT}{T^{[]}}
\newcommand{\colTdagger}{T^{\dagger[]}}
\newcommand{\colTconjugate}{T^{[]\dagger \Transpose}}
\newcommand{\colF}{F^{[]}}
\newcommand{\colFdagger}{F^{\dagger[]}}
\newcommand{\colFcdot}{F_\cdot^{[]}}
\newcommand{\colFdaggercdot}{F_\cdot^{\dagger[]}}
\newcommand{\colFt}{F_t^{[]}}
\newcommand{\QSSigma}{\Lambda^{\Sigma}}
\newcommand{\IntegSigma}{\mathbb{I}_\Sigma}
\newcommand{\IntegSigmadagger}{\mathbb{I}_\Sigma^\ddagger}
\newcommand{\ProcSigma}{\Proc_\Sigma}
\newcommand{\MartSigma}{\mathbb{M}_\Sigma}
\newcommand{\ProcSigmadagger}{\Proc_\Sigma^\ddagger}
\newcommand{\MartSigmadagger}{\mathbb{M}_\Sigma^\ddagger}
\newcommand{\kAu}{(\noise,\Init,\cyclic)}
\newcommand{\Init}{\mathsf{A}}
\newcommand{\InitNoise}{\mathsf{M}}
\newcommand{\cyclic}{\upsilon}
\newcommand{\ItoSigma}{\Ito^{\Sigma}}
\newcommand{\analytic}{\mathcal{A}}
\newcommand{\core}{\mathcal{C}}
\newcommand{\subspace}{\mathfrak{X}}
\newcommand{\dense}{\mathrm{o}}
\newcommand{\Sigmao}{\Sigma^{\dense}}
\newcommand{\Sigmaoprime}{\Sigma^{\dense}{}'}
\newcommand{\Weyl}{\mathcal{W}}
\newcommand{\Noise}{\Nil}
\newcommand{\Opdagger}{\mathcal{O}^\ddagger}
\newcommand{\umatrix}{\mathcal{M}}
\newcommand{\ucol}{\mathcal{C}}
\newcommand{\urow}{\mathcal{R}}
\newcommand{\umatrixdagger}{\mathcal{M}^\ddagger}
\newcommand{\ucoldagger}{\mathcal{C}^\ddagger}
\newcommand{\urowdagger}{\mathcal{R}^\ddagger}
\newcommand{\affiliated}{\eta\,}
\newcommand{\Transpose}{\Til}
\newcommand{\transpose}{\mathsf{t}}
\newcommand{\Domloc}{\Dom_{\rm{loc}}}
\newcommand{\Ito}{\mathcal{I}}
\newcommand{\initFock}{\mathfrak{H}}
\newcommand{\initFocknoise}{\mathfrak{H}_\noise}
\newcommand{\noise}{\mathsf{k}}
\newcommand{\domain}{\mathcal{D}}
\newcommand{\vp}{\varpi}
\newcommand{\Mil}{\mathsf{M}}
\newcommand{\Nil}{\mathsf{N}}
\newcommand{\Al}{\mathsf{A}}
\newcommand{\Hil}{\mathsf{H}}
\newcommand{\hil}{\mathsf{h}}
\newcommand{\Kil}{\mathsf{K}}
\newcommand{\kil}{\mathsf{k}}
\newcommand{\init}{\mathfrak{h}}
\newcommand{\Til}{\mathsf{T}}
\newcommand{\Exps}{\mathcal{E}}
\newcommand{\Fock}{\mathcal{F}}
\newcommand{\Op}{\mathcal{O}}
\newcommand{\Skorohod}{\mathcal{S}}
\newcommand{\khat}{\wh{\kil}}
\newcommand{\Proc}{\mathbb{P}}
\newcommand{\Real}{\mathbb{R}}
\newcommand{\Rplus}{\Real_+}
\newcommand{\Comp}{\mathbb{C}}
\newcommand{\Nat}{\mathbb{N}}
\newcommand{\Torus}{\mathbb{T}}
\newcommand{\wh}{\widehat}
\newcommand{\wt}{\widetilde}
\newcommand{\ol}{\overline}
\newcommand{\ul}{\underline}
\newcommand{\ot}{\otimes}
\newcommand{\otol}{\,\ol{\otimes}\,}
\newcommand{\otul}{\,\ul{\otimes}\,}
\newcommand{\op}{\oplus}
\newcommand{\tu}{\textup}
\DeclareMathOperator{\Graph}{Graph}
\DeclareMathOperator{\Dom}{Dom}
\DeclareMathOperator{\Ran}{Ran}
\DeclareMathOperator{\Lin}{Lin}
\DeclareMathOperator{\re}{Re}
\DeclareMathOperator{\im}{Im}
 \DeclareMathOperator{\loc}{loc}
\newcommand{\ip}[2]{\langle #1, #2 \rangle}
\newcommand{\norm}[1]{\lVert #1 \rVert}
\newcommand{\bra}[1]{\langle #1 |}
\newcommand{\ket}[1]{| #1 \rangle}
\newenvironment{alist}
{

\begin{enumerate}}
{\end{enumerate}}
\newenvironment{rlist}
{

\begin{enumerate}}
{\end{enumerate}}
\numberwithin{equation}{section} \pagestyle{headings}
\begin{document}

\title
[Quasifree martingales]
 {Quasifree martingales}

\author[Martin Lindsay]{J.\ Martin Lindsay}
\author[Oliver Margetts]{Oliver T. Margetts}
\address{Department of Mathematics and Statistics,
 Fylde College,
Lancaster University, Lancaster LA1 4YF, U.K.}
\email{j.m.lindsay@lancaster.ac.uk, o.margetts@lancaster.ac.uk}

\subjclass[2010]{Primary
 81S25, 46L53; Secondary 47L60}
  \keywords{Noncommutative probability, quantum stochastic, quantum martingale,
 CCR algebra, symplectic map, Bogoliubov transformation, quasifree state, squeezed state}

 \begin{abstract}
 A noncommutative Kunita-Watanabe-type
 representation theorem is established
 for the martingales of
 quasifree states of CCR algebras.
 To this end the basic theory of quasifree stochastic
 integrals is developed
 using the abstract It\^o integral in symmetric Fock space,
 whose interaction with the
 operators of Tomita-Takesaki theory we describe.
 Our results extend earlier quasifree
 martingale representation theorems in two
 ways: the states are no longer assumed to be gauge-invariant, and
 the multiplicity space may now be infinite-dimensional.
 The former involves
 systematic exploitation of Araki's Duality Theorem.
 The latter requires the development of a
 transpose on matrices of unbounded operators,
 defying the lack of complete boundedness of the transpose operation.
 \end{abstract}
\maketitle

\section*{Introduction}
\label{section: introduction}

 In this paper we consider martingales adapted to a filtration of
 von Neumann algebras determined by a quasifree state of the CCR
 algebra over an $L^2$-space of vector valued functions on the half-line.
 The main tools of our analysis are the abstract It\^o integral in Fock space
 whose interaction with the operators of Tomita-Takesaki theory
 enables us to develop
 the basic theory of quasifree quantum stochastic integrals,
 and Araki's Duality Theorem for generating Type III factors with
 a cyclic and separating vector from the Fock representation of a
 CCR algebra.
 A transpose operation on the relevant class of integrands
 also plays a crucial role.
 The main result is a noncommutative
 Kunita-Watanabe-type representation theorem for quasifree
 martingales.

 Our results extend previous work in two ways.
 First the multiplicity space of the noise may now be
 infinite dimensional,
 and secondly,
 the class of quasifree states is much wider than hitherto
 considered;
 it is subject only to natural constraints,
 in particular we go beyond guage-invariant states.
 The importance of the former generalisation is underlined
 by the fact that the
  stochastic flows arising in the dilation of Markov
 semigroups on operator algebras typically require
 infinite-dimensional multiplicity spaces.
 A consequence of the latter is that
 (without guage invariance)
 creation and annihilation integrals need no longer be
 mutually orthogonal at the Hilbert space level.
 As with~\cite{HuL}, and its
 fermionic counterpart~\cite{Lfermionic},
 the full filtration of the quasifree noise is used here,
 rather than that generated by a fixed linear combination of
 quasifree quantum stochastic integrators, as in~\cite{BSWtwo}
 (the connection between these is elucidated in~\cite{LiW}).

 Recent developments in the use of quantum probabilistic models
 (e.g.~\cite{AtJ},~\cite{BeltonCMP}) demonstrate the need for
 quasifree stochastic analysis.
 In a sister paper (\cite{LM}) we develop a stochastic calculus for the
 quasifree integrals defined here.

 Noncommutative martingale representation theorems have been
 established in a variety of other contexts. The original one was for
 the Clifford filtration, which is the fermionic analogue of the
 Wiener filtration of canonical Brownian motion (\cite{BSWone}).
 Its free analogue was obtained in~\cite{BianeSpeicher}.
 A representation theorem for martingales with respect to the
 operator filtration of (minimal variance) quantum Brownian motion
 as Hudson-Parthasarathy quantum stochastic integrals, was obtained
 in~\cite{HLP} for the classes of essentially Hilbert-Schmidt and
 unitary martingales, in~\cite{PS1} for so-called regular martingales,
 and in~\cite{PS2} for regular martingales with respect to
 infinite dimensional quantum noise
 (see also~\cite{Meyer} and, for a recent
 coordinate-free treatment not reliant on extra set-theoretic axioms,~\cite{LQSI}).
 These results lie at a deeper level of noncommutativity than the Clifford
 and free cases, which make essential use of the finite trace
 available in those contexts.
 So far they cover only
  a class of bounded
 (as opposed to $L^2$-) martingales,
 however they do extend very satisfactorily
 to an algebra of semimartingales whose martingales
 precisely comprise the Parthasarathy-Sinha class
 (\cite{Attal}).
 White noise extensions of the latter
 form of martingale representation have been obtained
 in which explicit expression is found for the `stochastic derivatives'
 (see~\cite{JiO} and references therein).

The plan of the paper is as follows.
 In
 Section~\ref{section: affiliated operators}
 an extension of the well-known vector-operator
 correspondence for operators affiliated to a von Neumann algebra
 with cyclic and separating vector is established.
 The transpose operation that we need for defining
 quasifree stochastic integrals is identified,
 and its properties described, in
 Section~\ref{section: transpose and conjugate}.
 Commutation relations between the abstract It\^o integral in Fock
 space and operators which respect the Fock space filtration
 are proved in
 Section~\ref{section: Ito commutation relations}.
 In
 Sections~\ref{section: CCR algebras and qf states}
 and~\ref{section: qf states for stochastic analysis},
 the general context for our stochastic calculus is set,
 through a detailed discussion of relevant sufficient conditions
 for Araki's Duality Theorem to apply.
  Natural assumptions for the stochastic setting then emerge
 and these are shown to imply the sufficient conditions.
  We also describe
 classes of examples of quasifree states
 for stochastic calculus
 which are covered by our general assumptions.
 Section~\ref{section: modified Ito}
 establishes the underlying vector process theory
 by means of a modified It\^o integral and its
 commutation relations with the relevant Tomita-Takesaki
 $S$ operator,
 using results of
 Section~\ref{section: Ito commutation relations}.
 In the last section,
   quasifree stochastic integrals are defined and are shown
  to yield all the martingales of the theory, moreover
  adjointability of a martingale is shown to correspond
  precisely to the adjointability of the quasifree integrand
  process.
 Various facts that we need about the behaviour of unbounded
 operators under composition, orthogonal sum and tensor product
 are gathered in an appendix.

\noindent
\emph{Notational conventions}.
 For any vector-valued function $f: \Rplus \to V$ and subinterval $I$ of $\Rplus$,
 $f_I$ denotes the function agreeing with $f$ on $I$ and taking the value $0$ outside $I$.
  All Hilbert spaces are complex,
 with inner products linear in the second argument,
 in sinc with the following natural and very convenient
  (Dirac-inspired) notations: for
  a vector $u$ in the Hilbert space $\hil$, we write
  $\ket{u} \in \ket{\hil} := B(\Comp;\hil)$ and
 $\bra{u} \in \bra{\hil}:= B(\hil;\Comp)$ for the respective operators
 $\lambda\mapsto \lambda u$
 and $v\mapsto \ip{u}{v}$.
 We abbreviate $\hil\oplus \hil$ to $\hil^{\oplus 2}$.
 The linear span of a set of vectors $S$ is denoted $\Lin S$.
 For subspaces $U_1$ and $U_2$ of Hilbert spaces $\hil_1$ and $\hil_2$
 we write
 $U_1 \otul U_2$ for
 $\Lin \{u_1 \ot u_2: u_1\in U_1, u_2\in U_2 \}$,
 the linear tensor product of $(U_1,U_2)$
 realised in the
 Hilbert space tensor product $\hil_1 \ot \hil_2$.
 Blanks replace zero entries in matrices.

  The following notation is used for the symmetric Fock space
  over a Hilbert space $\hil$:
 $\Gamma(\hil) = \bigoplus_{n\geq 0} \hil^{\vee n}$,
 where $\hil^{\vee 0} =  \Comp$ and,
 for $n\geq 1$, $\hil^{\vee n}$ denotes the $n$-fold symmetric
 tensor power of $\hil$.
 The (normalised) exponential vectors are given by
\[
 \vp(u) := \exp(-\norm{u}^2/2)\, \ve(u) \ \text{ where } \
 \ve(u):= \big( (n!)^{-1/2}u^{\ot n}\big)_{n\geq 0}
 \qquad
 (u\in \hil),
\]
 and the Fock vacuum vector
 $\Omega_\hil$, by $\vp(0) = \ve(0) \in \Gamma(\hil)$.
 For $S \subset \hil$, we set
 $\Exps(S) := \Lin\{\ve(v): v\in S\}$.
    For $u\in\hil$, the Fock-Weyl operator $W_0(u)$
 is the unitary obtained by continuous linear extension
 of the inner-product preserving prescription
\begin{equation*}
 \vp(v) \mapsto e^{-i \im \ip{u}{v}} \vp(u+v)
 \qquad
 (v\in \hil).
\end{equation*}
  We also use the gradient operator $\nabla$ on Fock space
  (which will be freely ampliated without change of notation).
  This is the unique closed operator from
  $\Gamma(\hil)$ to $\hil \ot \Gamma(\hil)$
  with core $\Exps : = \Exps(\hil)$
  satisfying
  \[
  \nabla \ve(v) = v \ot \ve(v)
  \qquad (v\in\hil).
  \]

\section{Affiliated operators and matrix-operator correspondence}
 \label{section: affiliated operators}

 The following notations will be used for classes of unbounded operators.
 For a subspace $\domain_1$ of the Hilbert space $\Hil_1$,
 write $\Op(\domain_1;\Hil_2)$ for the linear space of operators from
 $\Hil_1$ to $\Hil_2$ with domain $\domain_1$ and,
 for dense subspaces
 $\domain_1$  of $\Hil_1$ and $\domain_2$ of $\Hil_2$, set
 \[
 \Opdagger(\domain_1,\domain_2)
 :=
 \big\{
 T \in \Op(\domain_1;\Hil_2):
 \Dom T^* \supset \domain_2
 \big\}
 \text{ and }
 T^\dagger := (T^*)_{|\domain_2}.
 \]
 Clearly the dagger operation is a conjugate-linear isomorphism
 \begin{equation}
 \label{dagger}
 \dagger: \Op^\ddagger(\domain_1,\domain_2)
 \to
 \Op^\ddagger(\domain_2,\domain_1)
 \end{equation}
 satisfying $T^{\dagger\dagger}= T$.
 In case the Hilbert spaces are the same,
 we abbreviate $\Opdagger(\domain,\domain)$ to
 $\Opdagger(\domain)$.

\begin{rem}
 By the Closed Graph Theorem,
 $\Opdagger(\Hil_1,\domain_2) = B(\Hil_1;\Hil_2)$,
 for any dense subspace $\domain_2$
 of $\Hil_2$.
\end{rem}

 \emph{For this section we fix a von Neumann algebra} $(\Mil,\Hil)$.
 There will be supplementary Hilbert spaces $\hil$, $\hil_1$ and $\hil_2$
 appearing. The following definition extends standard terminology
 (for the case where $\hil_1 = \hil_2 = \Comp$).

\begin{defn}
 A possibly unbounded operator $T$, from
 $\hil_1 \ot \Hil$ to  $\hil_2 \ot \Hil$,
 \emph{is affiliated to} $\Mil$, written
 $T \affiliated B(\hil_1;\hil_2) \otol \Mil$, if
 for all unitaries $u$ in $\Mil'$,
 $(I_2 \ot u^*) T (I_1 \ot u) = T$,
 in particular $(I_1 \ot u) \Dom T = \Dom T$.
\end{defn}
 \begin{rem}
 If $T$ is closed and densely defined then
 $T \affiliated B(\hil_1;\hil_2) \otol \Mil$
 if and only if
 $P_G \in B(\hil_1 \op \hil_2) \otol \Mil$,
 where $G = \Graph T$.
 \end{rem}
 For a subspace $\domain$ of $\Hil$, set
\[
 \Op_\Mil(\hil_1 \otul \domain; \hil_2 \ot \Hil) :=
 \big\{
 T \in \Op(\hil_1 \otul\domain; \hil_2 \ot \Hil):
  T \affiliated B(\hil_1;\hil_2)\otol \Mil
 \big\},
\]
 and if $\domain$ is dense, also set
 \[
 \Opdagger_\Mil(\hil_1 \otul \domain, \hil_2 \otul \domain) :=
 \big\{
 T \in \Opdagger(\hil_1 \otul \domain, \hil_2 \otul \domain) :
 T \affiliated B(\hil_1; \hil_2) \otol \Mil
 \big\},
 \]
 and abbreviate
 $\Opdagger_\Mil(\hil \otul \domain, \hil \otul \domain)$
  to
 $\Opdagger_\Mil(\hil \otul \domain)$.
 It is easily seen that,
 when $\domain_1 = \hil_1 \otul \domain$ and
 $\domain_2 = \hil_2 \otul \domain$,
  the
 conjugate-linear isomorphism~\eqref{dagger}
 restricts to an isomorphism
 \[
 \Opdagger_\Mil(\hil_1 \otul \domain, \hil_2 \otul \domain)
 \to
 \Opdagger_\Mil(\hil_2 \otul \domain, \hil_1 \otul \domain).
 \]

 \emph{For the rest of the section suppose that $\Mil$ has
 a cyclic and separating vector} $\xi$,
 set $\Xi = \Mil' \xi$, and let
 $S_\xi$ be the associated Tomita-Takesaki operator
 (\cite{TakesakiTwo}, Chapter VI;
 \cite{StZ}, Chapter 10).
 Define operators $E_\xi := I_\Hil \ot \ket{\xi}$ and
 $E^\xi := (E_\xi)^* = I_\Hil \ot \bra{\xi}$
 where the Hilbert space $\Hil$ is determined by context.
 Note that
\[
 \Opdagger_\Mil(\hil_1 \otul \Xi, \hil_2 \otul \Xi)
=
 \big\{ T \in \Op_\Mil(\hil_1 \otul \Xi, \hil_2 \ot \Hil):
  \hil_2 \subset \Dom T^*E_\xi
 \big\}.
\]

 The following class of operators helps us manage adjoints of
 affiliated operators through bounded operators:
 \begin{multline*}
 B^\ddagger_{\Mil,\xi}(\hil_1;\hil_2 \ot \Hil):=
 \big\{
 B \in B(\hil_1;\hil_2 \ot \Hil):
 \\
  \exists_{B_\dagger \in B(\hil_2;\hil_1\ot\Hil)} \
  \forall_{x'\in\Mil'} \
 B^* \big( I_2 \ot x'\big) E_{\xi} =
 E^\xi \big( I_1 \ot x' \big) B_{\dagger}
 \big\}.
 \end{multline*}
 When such an operator exists it is unique.
 The map
 \begin{equation*}
 B^\ddagger_{\Mil,\xi}(\hil_1;\hil_2 \otul \Hil)
 \to
 B^\ddagger_{\Mil,\xi}(\hil_2;\hil_1 \otul \Hil),
 \quad
 B \mapsto B_\dagger
 \end{equation*}
 is manifestly a conjugate-linear isomorphism
 satisfying $B_{\dagger\dagger} = B$.
 Clearly, for $B \in B(\hil_1; \hil_2 \ot \Hil)$,
 to be in
 $B^\ddagger_{\Mil, \xi}(\hil_1; \hil_2 \ot \Hil)$
 is for there to be a
 $B_\dagger \in B(\hil_2; \hil_1 \ot \Hil)$
 satisfying
 \begin{equation}
 \label{characterised}
 \ip{c_1 \ot x' \xi}{B_\dagger c_2} =
 \ip{B c_1}{c_2 \ot x'^* \xi}
 \quad
 (c_1 \in \hil_1, c_2 \in \hil_2, x' \in \Mil').
 \end{equation}
 Moreover,
 for $A \in B(\hil_1;\hil_2)$ and $\eta \in \Dom S_\xi$,
 \[
 A \ot \ket{\eta} \in B^\ddagger_{\Mil,\xi}(\hil_1; \hil_2 \ot \Hil)
 \  \text{ and }\
 \big( A \ot \ket{\eta} \big)_\dagger = A^* \ot \ket{S_\xi \eta}.
 \]
 Note also that, when
 $T \in \Opdagger_\Mil(\hil_1 \otul \Xi, \hil_2 \otul \Xi)$,
 the operator $T E_{\xi}$ is everywhere defined and closed,
 and thus bounded.
 The `matrix-operator' correspondences contained in
 the straightforward proposition below
 play a significant role in the sequel.
\begin{propn}
 \label{when D =}
 The map
\[
 \Op_\Mil(\hil_1 \otul \Xi; \hil_2 \ot \Hil)
 \to \Op(\hil_1; \hil_2 \ot \Hil),
 \quad
 T \mapsto T E_\xi := T E_{\xi}
\]
 is a linear isomorphism
 with inverse given by
 $B \mapsto B^\xi$, where
 $B^\xi$ is the linearisation of the bilinear map
 \[
 (c_1, x' \xi) \mapsto (I_2 \ot x') B c_1,
 \]
 which restricts to an isomorphism
 \[
 \Opdagger_\Mil(\hil_1 \otul \Xi, \hil_2 \otul \Xi)
 \to
 B^\ddagger_{\Mil, \xi}(\hil_1;\hil_2 \otul \Hil),
 \]
 intertwining the operations $^\dagger$ and $_\dagger$\tu{:}
\[
 \big(T E_{\xi}\big)_\dagger = T^\dagger E_{\xi}
 \ \text{ and } \
 (B^\xi)^\dagger = (B_\dagger)^\xi.
\]
\end{propn}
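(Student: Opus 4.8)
The plan is to handle the two assertions in turn, exploiting throughout that $\xi$ is separating for $\Mil'$ and that every element of $\Mil'$ is a linear combination of unitaries; together these reduce all computations on the dense domain $\hil_1\otul\Xi$ to computations at the single vector $\xi$. For the first (purely linear) statement I would begin by checking that $B\mapsto B^\xi$ is well defined: the prescription $(c_1,x'\xi)\mapsto(I_2\ot x')Bc_1$ depends on $x'$ only through $x'\xi$, since $\xi$ separating for $\Mil'$ forces $x'\xi=0\Rightarrow x'=0$; linearising then gives $B^\xi\in\Op(\hil_1\otul\Xi;\hil_2\ot\Hil)$, and its affiliation to $\Mil$ follows from $u\Mil'\xi=\Mil'\xi$ for unitary $u\in\Mil'$. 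That the two maps are mutually inverse is where affiliation of $T$ enters: expressing $x'\in\Mil'$ through unitaries and using $(I_2\ot x')T=T(I_1\ot x')$ on $\Dom T$ yields $(I_2\ot x')TE_\xi c_1=T(c_1\ot x'\xi)$, whence $(TE_\xi)^\xi=T$, while $(B^\xi)E_\xi=B$ is evaluation at $x'=I$.

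For the restriction to the dagger classes I would invoke the two facts already recorded before the statement: that $TE_\xi$ is everywhere defined, closed and hence bounded for $T$ in the dagger class, and the displayed identity characterising that class by the single condition $\hil_2\otul\{\xi\}\subset\Dom T^*$ (equivalent, via affiliation of $T^*$, to $\hil_2\otul\Xi\subset\Dom T^*$). The heart of the matter is the intertwining of $^\dagger$ with $_\dagger$. Given such a $T$, I would set $B_\dagger:=T^\dagger E_\xi$ and verify the characterisation~\eqref{characterised} directly: expanding $\ip{T(c_1\ot\xi)}{c_2\ot x'^*\xi}$, carrying $c_2\ot x'^*\xi\in\hil_2\otul\Xi\subset\Dom T^*$ over to $T^\dagger$, and using affiliation of $T^*$ to extract $T^\dagger(c_2\ot x'^*\xi)=(I_1\ot x'^*)T^\dagger(c_2\ot\xi)$, collapses the expression to $\ip{c_1\ot x'\xi}{B_\dagger c_2}$. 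This gives $TE_\xi\in B^\ddagger_{\Mil,\xi}(\hil_1;\hil_2\ot\Hil)$ together with $(TE_\xi)_\dagger=T^\dagger E_\xi$.

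Conversely, for $B\in B^\ddagger_{\Mil,\xi}(\hil_1;\hil_2\ot\Hil)$ I would prove $(B^\xi)^*\supset(B_\dagger)^\xi$ by testing on $\phi=c_1\ot y'\xi$ and $\psi=c_2\ot x'\xi$: after transferring the $\Mil'$-factors onto the $B$- and $B_\dagger$-sides respectively, both $\ip{B^\xi\phi}{\psi}$ and $\ip{\phi}{(B_\dagger)^\xi\psi}$ reduce to the two sides of~\eqref{characterised} evaluated at $z'=x'^*y'$. This simultaneously shows $\hil_2\otul\Xi\subset\Dom(B^\xi)^*$, so that $B^\xi$ lies in the dagger class, and identifies $(B^\xi)^\dagger=(B_\dagger)^\xi$. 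I expect the only real obstacle to be organisational rather than conceptual: keeping track of the several elements of $\Mil'$, their adjoints, and the two distinct inner products (on $\hil_1\ot\Hil$ and $\hil_2\ot\Hil$) so that each verification collapses to a single application of~\eqref{characterised}. Once affiliation has been used to reduce everything to the cyclic-separating vector $\xi$, no analytic difficulty remains.
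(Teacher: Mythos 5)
Your proof is correct. The paper in fact offers no proof of this proposition---it is introduced as ``the straightforward proposition below''---and your argument is precisely the intended verification, assembled from the material set out immediately beforehand: well-definedness of $B \mapsto B^\xi$ from $\xi$ being separating for $\Mil'$, mutual inversion via affiliation of $T$ (expressing elements of $\Mil'$ as linear combinations of unitaries), and the dagger intertwining via the characterisation~\eqref{characterised} together with the two facts recorded just before the statement (boundedness of $T E_\xi$ for $T$ in the dagger class, and the displayed description of $\Opdagger_\Mil(\hil_1 \otul \Xi, \hil_2 \otul \Xi)$ through the condition $\hil_2 \subset \Dom T^* E_\xi$).
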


\begin{rems}
 To illustrate on simple tensors, let
 \[
 A \in \Op(\hil_1;\hil_2), \
 B \in B(\hil_1;\hil_2), \
 R \in \Op_\Mil(\Xi; \Hil), \
 X \in \Op_\Mil(\Xi)\ \text{ and }\ Z \in \Opdagger_\Mil(\Xi).
 \]
 Then, setting $\zeta = Z \xi$,
 \begin{align*}
 &(A \otul R) E_{\xi} = A \otul \ket{R \xi}
 \ \text{ and } \
 (A \otul \ket{X \xi})^\xi = A \otul X,
 \text{ so }
 \\
 &\big((B \otul R) E_{\xi}\big)_\dagger =
 \big( B \ot \ket{R \xi} \big)_\dagger =
 B^* \ot \ket{S_\xi R \xi} =
 B^* \ot \ket{R^\dagger \xi} =
 (B \otul R)^\dagger E_{\xi},
 \text{ and }
 \\
 &\big( (B \ot \ket{\zeta})_\dagger \big)^\xi =
 \big( B^* \ot \ket{S_\xi \zeta} \big)^\xi =
 B^* \otul Z^\dagger \subset
 (B \ot Z)^* =
 \big((B \ot \ket{\zeta})^{\xi}\big)^*,
 \end{align*}
  thus
 $\big( (B \ot \ket{\zeta})_\dagger \big)^\xi =
 \big( (B \ot \ket{\zeta})^\xi \big)^\dagger$.

 When $\hil_1 = \hil_2 = \Comp$
 the above correspondences reduce to the well-known
 linear isomorphism
 \begin{equation}
 \label{eqn: op vector}
 \Op_\Mil(\Xi; \Hil) \to \Hil,
 \quad
 X \mapsto X \xi,
 \end{equation}
 and its restriction, the isomorphism
 \begin{equation}
 \label{eqn: opdagger dom S}
 \Opdagger_\Mil(\Xi) \to \Dom S_\xi,
 \end{equation}
 under which $S_\xi (X \xi) = X^\dagger \xi$
 (see, for example,~\cite{BrR}, Proposition 2.5.9).
 Specifically,
 $\Op_\Mil(\Comp; \Comp \ot \Hil) = \ket{\Hil}$ and
 \[
 B^\ddagger_{\Mil,\xi}(\Comp;\Comp \otul \Hil) =
 \big\{ \ket{\zeta}: \zeta \in \Dom S_\xi
 \big\}
 \text{ with }
 \ket{\zeta}_\dagger = \ket{S_\xi \zeta}.
 \]
 In the next section we shall see how this
 connection can be raised to the matrix level.
\end{rems}

 We end this section with another very useful elementary fact.
 \begin{lemma}
 \label{ultrastrong lemma}
 Let $\mathcal{V}$ be a subspace of $\Mil'$.
 \begin{alist}
 \item
 If $\mathcal{V}$ is dense in $\Mil'$ in the strong operator topology
 then $\mathcal{V}\xi$ is a common core for
 all operators in $\Opdagger_\Mil(\Xi)$.
 \item
 If $\mathcal{V}$ is dense in $\Mil'$ in the ultrastrong topology
 then $\hil_1 \otul \mathcal{V}\xi$ is a common core for all operators in
 $\Opdagger_\Mil(\hil_1 \otul\Xi,\hil_2\otul\Xi)$.
 \end{alist}
 \end{lemma}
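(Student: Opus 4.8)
The plan for both parts is to exploit affiliation to collapse each operator onto a single fixed vector, and then to push a net realising the density of $\mathcal{V}$ through the graph. For part~(a), fix $T \in \Opdagger_\Mil(\Xi)$; since $T^\dagger$ is densely defined, $T$ is closable, and as $\xi = I\xi \in \Xi$ we may set $\zeta := T\xi$. Writing a given $x' \in \Mil'$ as a linear combination of four unitaries and applying affiliation to each, one obtains $T(x'\xi) = x'\zeta$ for every $x' \in \Mil'$, so the action of $T$ on $\Xi$ is entirely encoded by the vector $\zeta$. Given $x' \in \Mil'$, strong density provides a net $(v'_\alpha)$ in $\mathcal{V}$ with $v'_\alpha \to x'$ strongly; then $v'_\alpha\xi \to x'\xi$ and $T(v'_\alpha\xi) = v'_\alpha\zeta \to x'\zeta = T(x'\xi)$, so the graph point $\big(x'\xi, T(x'\xi)\big)$ lies in the closure of $\Graph(T_{|\mathcal{V}\xi})$. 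As $x'$ ranges over $\Mil'$ these points exhaust $\Graph(T_{|\Xi})$, so the two graph closures coincide and $\mathcal{V}\xi$ is a core for $\bar T$.

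Part~(b) follows the same template with the ampliated action in place of the scalar one. For $T \in \Opdagger_\Mil(\hil_1 \otul\Xi, \hil_2\otul\Xi)$, affiliation (again extending from unitaries by linearity) gives $T(I_{\hil_1}\ot x') = (I_{\hil_2}\ot x')\,T$ on $\hil_1 \otul\Xi$; setting $\zeta_c := T(c\ot\xi)$, which is legitimate since $c\ot\xi \in \hil_1\otul\Xi$, one has $T(c\ot x'\xi) = (I_{\hil_2}\ot x')\zeta_c$. To approximate the spanning vectors $c\ot x'\xi$, I would again take a net $(v'_\alpha)$ in $\mathcal{V}$ converging to $x'$ and ask for $c\ot v'_\alpha\xi \to c\ot x'\xi$ in $\hil_1\ot\Hil$ together with $(I_{\hil_2}\ot v'_\alpha)\zeta_c \to (I_{\hil_2}\ot x')\zeta_c$ in $\hil_2\ot\Hil$. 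Both requirements say exactly that the ampliations $I_{\hil_i}\ot v'_\alpha$ converge strongly to $I_{\hil_i}\ot x'$.

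This is precisely where the hypothesis must be strengthened, and it is the one point requiring genuine care: when $\hil_1$ or $\hil_2$ is infinite-dimensional, strong convergence of $(v'_\alpha)$ need not force strong convergence of its ampliations, whereas ultrastrong ($\sigma$-strong) convergence does, this being the sense in which the ultrastrong topology is stable under ampliation. Granting it, both graph components converge, so each $\big(c\ot x'\xi, T(c\ot x'\xi)\big)$ lies in the closure of $\Graph(T_{|\hil_1\otul\mathcal{V}\xi})$; since that closure is a linear subspace and the vectors $c\ot x'\xi$ span $\hil_1\otul\Xi$, it contains all of $\Graph(T_{|\hil_1\otul\Xi})$, and the core property follows as before, $T$ being closable because $T^\dagger$ is densely defined. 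Everything beyond the ampliation-stability of the ultrastrong topology is the routine bookkeeping of transporting a net through a graph.
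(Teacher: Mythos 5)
Your proof is correct. The paper states this lemma without proof, as an elementary fact, and your argument --- deriving $T(x'\xi) = x'T\xi$ (and its ampliated analogue) from affiliation via the four-unitaries decomposition of $x' \in \Mil'$, then pushing an approximating net through the graph, with the ultrastrong hypothesis in part (b) supplying exactly the strong convergence of the ampliations $I_{\hil_i} \ot v'_\alpha$ that mere strong convergence would fail to give for infinite-dimensional $\hil_i$ --- is precisely the intended one.
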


 \section{Transpose and conjugate for matrices of unbounded operators}
 \label{section: transpose and conjugate}

 \emph{For this section we fix a von Neumann algebra $(\Mil,\Hil)$
 with cyclic and separating vector $\xi$}, let
 $S_\xi$ and $F_\xi$ denote the corresponding Tomita-Takesaki
 operators, and set $\Xi = \Mil' \xi$.
 Also Hilbert spaces
 $\noise$, and $\noise_i$ ($i=0, 1, ... $),
 will appear which are complexifications of real Hilbert spaces;
 we denote
 the action of their associated conjugations $k$, respectively $k_i$,
 by $c \mapsto \ol{c}$.
 We consider a transpose operation on a class of
 abstract matrix spaces over a space of unbounded operators
 affiliated to $\Mil$.
 We then detail
 its relation to the dagger
 (adjoint) operation and to $S_\xi$.
 This is needed to handle quasifree stochastic integrals for
 infinite dimensional noise; it also enables
 multiple quasifree integrals to be defined in~\cite{LM},
 where they are used for solving
 quasifree stochastic differential equations.

 For $B\in B(\noise_1;\noise_2)$, its \emph{conjugate operator}
 is defined by
 \[
 \ol{B} := k_2 B k_1 \in B(\noise_1;\noise_2), \
 c \mapsto \ol{B\ol{c}},
 \]
 and its \emph{transpose} by
 $B^\transpose := \ol{B}^* = \ol{B^*}$.
 The transpose maps $B(\noise_1;\noise_2)$ linearly and isometrically onto
 $B(\noise_2;\noise_1)$.
 Due to the lack of complete boundedness of the transpose,
 the map
 \[
 B(\noise_1;\noise_2) \otul B(\Hil_1;\Hil_2) \to
 B(\noise_2;\noise_1) \otol B(\Hil_1;\Hil_2),
 \]
 given by linearisation of the bilinear map
 $
 (B, X) \mapsto B^\transpose \ot X
 $,
 is unbounded unless $B(\noise_1;\noise_2)$ or $B(\hil_1;\hil_2)$
 is finite-dimensional (see, for example~\cite{EfR}).
 We need to overcome this obstruction whilst tranposing a class of
 \emph{unbounded} operators.

 We exploit the fact that the transpose restricts to a unitary
 operator between the Hilbert-Schmidt classes, say
 $U: HS(\noise_1;\noise_2) \to
 HS(\noise_1;\noise_2)$
 and so,
 for any Hilbert spaces $\hil_1$ and $\hil_2$,
 there is a partial transpose
 \[
 U \ot I
 : HS(\noise_1;\noise_2) \ot HS(\hil_1;\hil_2) =
 HS(\noise_1\ot\hil_1;\noise_2\ot\hil_2)
  \to
  HS(\noise_2\ot\hil_1;\noise_1\ot\hil_2)
 \]
 which we denote by $H \mapsto H_\Transpose$.
 This is characterised by
 \begin{equation}
 \label{H transpose}
 \ip{c_1 \ot v_2}{H_\Transpose(c_2 \ot v_1)} =
 \ip{\ol{c_2} \ot v_2}{H (\ol{c_1} \ot v_1)}
 \qquad
 (c_i\in\noise_i, v_i\in\hil_i, i=1,2).
 \end{equation}
  The class of unbounded operators that we need to transpose
 is defined next.
 Recall the linear isomorphisms described in
 Proposition~\ref{when D =}.

 \begin{defn}
 The $(\noise_1,\noise_2)$-\emph{matrix space associated to}
 $(\Mil,\xi)$ is the following class of operators:
 \[
 \umatrix_{\noise_1;\noise_2}(\Mil, \xi):=
 \big\{
 T \in
 \Op_\Mil(\noise_1 \otul \Xi; \noise_2\ot\Hil):
 T E_\xi \in HS(\noise_1;\noise_2\ot\Hil)
 \big\},
 \]
 and for $T \in \umatrix_{\noise_1;\noise_2}(\Mil, \xi)$,
 its (\emph{matrix}) \emph{transpose} is given by
 \[
 T^\Transpose := \big( (T E_\xi)_\Transpose \big)^\xi,
 \]
 thus, for $B \in HS(\noise_1;\noise_2\ot\Hil)$,
  $(B^\xi)^\Transpose = (B_\Transpose)^\xi$.
 The corresponding \emph{column and row spaces} are given by
 \[
 \ucol_{\noise}(\Mil, \xi) := \umatrix_{\Comp;\noise}(\Mil, \xi)
 \text{ and }
 \urow_{\noise}(\Mil, \xi) :=
 \umatrix_{\noise;\Comp}(\Mil, \xi).
 \]
  \end{defn}

\begin{rems}
 This construction evidently enjoys the following properties:
 \begin{align*}
 &\umatrix_{\noise_1;\noise_2}(\Mil,\xi) =
 \big\{
  B^\xi: B
  \in HS(\noise_1;\noise_2\ot\Hil)
 \big\};
 \\
 &
 HS(\noise_1;\noise_2) \otul \Op_\Mil(\Xi) \subset
 \umatrix_{\noise_1;\noise_2}(\Mil,\xi)),
 \text{ with equality if }
 \Mil = \Comp, \text{ or if } \noise_1=\noise_2 = \Comp;
 \\
 &(H \ot X)^\Transpose = H^\transpose \ot X
 \qquad
 (H \in HS(\noise_1;\noise_2), X\in \Op_\Mil(\Xi));
 \\
 &
 \ucol_{\noise}(\Mil,\xi) = \Op_\Mil(\Xi; \noise\ot\Hil),
 \text{ whereas}
 \\
 &
 \urow_{\noise}(\Mil,\xi) =
 \big\{
 R \in
 \Op_\Mil(\noise\otul\Xi; \Hil):
 R E_\xi \in HS(\noise;\Hil)
 \big\};
  \\
 &\big(B(\noise_2;\noise_3) \otol \Mil\big)
 \umatrix_{\noise_1;\noise_2}(\Mil,\xi)
 \big( B(\noise_0;\noise_1)\ot I_\Hil \big)
 \subset
 \umatrix_{\noise_0;\noise_3}(\Mil,\xi).
  \end{align*}
 Moreover,
 $\umatrix_{\noise_1;\noise_2}(\Mil,\xi)$
 is a left $B(\noise_2) \otol \Mil$-module
 and a right $B(\noise_1)$-module, and the
 matrix transpose is characterised by
\[
\ip{c_1 \ot x'\xi}{T^\Transpose(c_2 \ot \xi)} =
 \ip{\ol{c_2} \ot x'\xi}{T(\ol{c_1} \ot \xi)}
 \qquad
 (c_1\in\noise_1, c_2 \in \noise_2, x'\in\Mil').
\]
\end{rems}

 We now need to relate the transpose operation
\[
 \umatrix_{\noise_1; \noise_2}(\Mil, \xi) \to
 \umatrix_{\noise_2; \noise_1}(\Mil, \xi),
 \quad
 T \mapsto T^\Transpose
\]
with the adjoint operation
\[
 \Opdagger_\Mil (\noise_1 \otul \Xi, \noise_2 \otul \Xi) \to
 \Opdagger_\Mil (\noise_2 \otul \Xi, \noise_1 \otul \Xi)
  \quad
 T \mapsto T^\dagger.
\]
 Specifically, we seek the appropriate space of operators/matrices
 compatible with both operations.
 To this end we define
\begin{multline*}
 HS_{\Mil,\xi}^\ddagger(\noise_1;\noise_2 \ot \Hil) :=
 \\
 \big\{ B \in
 HS(\noise_1; \noise_2 \ot \Hil)
 \cap
 B_{\Mil,\xi}^\ddagger(\noise_1;\noise_2 \ot \Hil):
 \
 B_\dagger \in HS(\noise_2;\noise_1 \ot \Hil)
 \big\}.
\end{multline*}
The proposition below justifies our choice.
 Its corollary, Theorem~\ref{Theorem 3.2} below, is key
 for the construction of quasifree stochastic integrals in
 Section~\ref{section: qf integrals}.
 For $i=1,2$, let $k_i$ denote the conjugations on $\noise_i$.

\begin{propn}
 \label{propn: HSdagger characterised}
 Let $B\in HS(\noise_1; \noise_2 \ot \Hil)$.
 Then the following are equivalent\tu{:}
 \begin{rlist}
 \item
 $B \in HS_{\Mil,\xi}^\ddagger(\noise_1;\noise_2 \ot \Hil)$.
 \item
 $B_\Transpose \in HS_{\Mil,\xi}^\ddagger(\noise_2;\noise_1 \ot \Hil)$.
 \item
 $\Ran B \subset \Dom k_2 \ot S_\xi$ and
 $\ol{B} := (k_2 \ot S_\xi) B k_1 \in HS(\noise_1; \noise_2 \ot \Hil)$.
 \end{rlist}
 In this case,
 \begin{equation*}
 B_{\dagger \Transpose} =
 B_{\Transpose \dagger} = \ol{B}.
 \end{equation*}
 \end{propn}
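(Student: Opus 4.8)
The plan is to test all three conditions against the dense set of vectors $c\ot x'\xi$ with $x'\in\Mil'$, reducing each to an inner-product identity, and to organise the argument around the central equivalence (i)$\Leftrightarrow$(iii) carrying the identity $B_{\dagger\Transpose}=\ol B$; the equivalence (ii)$\Leftrightarrow$(iii) and the second equality $B_{\Transpose\dagger}=\ol B$ will then follow formally from the behaviour of (iii) under the transpose. Three characterisations drive the computation: the partial transpose via \eqref{H transpose}, the dagger via \eqref{characterised}, and the conjugate via the Tomita--Takesaki relation $\ip{S_\xi\eta}{\zeta}=\ip{F_\xi\zeta}{\eta}$ (for $\eta\in\Dom S_\xi$, $\zeta\in\Dom F_\xi$) together with $F_\xi(x'\xi)=x'^*\xi$. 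The guiding model is the simple-tensor case $B=A\ot\ket\eta$ with $A\in HS(\noise_1;\noise_2)$ and $\eta\in\Dom S_\xi$, where $B_\dagger=A^*\ot\ket{S_\xi\eta}$, $B_\Transpose=A^\transpose\ot\ket\eta$ and $\ol B=\ol A\ot\ket{S_\xi\eta}$, so that $B_{\dagger\Transpose}=B_{\Transpose\dagger}=\ol B$ by the elementary identities $(\ol A)^\transpose=A^*=\ol{A^\transpose}$; the task is to promote this to arbitrary Hilbert--Schmidt $B$ while keeping track of $\Dom S_\xi$.

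Assuming (iii), so that $Bc_1\in\Dom(k_2\ot S_\xi)$ for every $c_1$, I test the operator $C:=(\ol B)_\Transpose$, which lies in $HS(\noise_2;\noise_1\ot\Hil)$ because the partial transpose is unitary between Hilbert--Schmidt classes, against the dagger characterisation \eqref{characterised}. Evaluating $\ip{c_1\ot x'\xi}{C c_2}$, I use \eqref{H transpose} to strip off the transpose, then the antilinear adjoint relation $(k_2\ot S_\xi)^*=k_2\ot F_\xi$ to move $S_\xi$ across onto $x'\xi$, and finally $F_\xi(x'\xi)=x'^*\xi$; the interchange of $c_2$ with $\ol{c_2}$ produced along the way is absorbed by the antiunitarity of the conjugations. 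The result is exactly $\ip{Bc_1}{c_2\ot x'^*\xi}$, so $C$ satisfies \eqref{characterised} and $B_\dagger=(\ol B)_\Transpose\in HS(\noise_2;\noise_1\ot\Hil)$; this gives (i), and transposing (with $\Transpose$ involutive) records $B_{\dagger\Transpose}=\ol B$.

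The delicate direction is (i)$\Rightarrow$(iii), where one must establish the range condition $\Ran B\subset\Dom(k_2\ot S_\xi)$ and not merely an identity of inner products on a dense set. Here I use that $k_2\ot S_\xi$ is a closed antilinear operator with antilinear adjoint $k_2\ot F_\xi$, for which $\noise_2\otul\Xi$ is a core (as $\Xi$ is a core for $F_\xi$), and on which $(k_2\ot F_\xi)(c_2\ot x'\xi)=\ol{c_2}\ot x'^*\xi$. Given (i), the Hilbert--Schmidt vector $(B_\dagger)_\Transpose\,\ol{c_1}$ supplies, through the computation of the previous paragraph run in reverse, a bounded functional $z\mapsto\ip{(k_2\ot F_\xi)z}{Bc_1}$ on that core; closedness of $k_2\ot S_\xi$ then forces $Bc_1\in\Dom(k_2\ot S_\xi)$ with $(k_2\ot S_\xi)Bc_1=(B_\dagger)_\Transpose\,\ol{c_1}$, delivering both $\Ran B\subset\Dom(k_2\ot S_\xi)$ and $\ol B=(B_\dagger)_\Transpose\in HS(\noise_1;\noise_2\ot\Hil)$, which is (iii). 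This closes the loop (i)$\Leftrightarrow$(iii).

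It remains to note that the partial transpose commutes with the conjugate, $\ol{B_\Transpose}=(\ol B)_\Transpose$ --- immediate on simple tensors from $(\ol A)^\transpose=A^*=\ol{A^\transpose}$ and extended by linearity and Hilbert--Schmidt continuity --- so condition (iii) is unchanged under $B\mapsto B_\Transpose$. Applying (i)$\Leftrightarrow$(iii) to $B_\Transpose$ in place of $B$ then yields (ii)$\Leftrightarrow$(iii). Finally, the core identity applied to $B_\Transpose$ gives $(B_\Transpose)_{\dagger\Transpose}=\ol{B_\Transpose}=(\ol B)_\Transpose$, and cancelling the involution $\Transpose$ leaves $B_{\Transpose\dagger}=\ol B$, completing the proof that $B_{\dagger\Transpose}=B_{\Transpose\dagger}=\ol B$.
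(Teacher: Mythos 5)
Your overall architecture is sound, and your two main computations are correct; in fact they are the paper's own ingredients, permuted. The paper proves (ii)$\Leftrightarrow$(iii) by exactly the core/adjoint duality you use (testing against $\noise_2\otul\Xi$, a core for $k_2\ot F_\xi$, with $(k_2\ot F_\xi)^*=k_2\ot S_\xi$), and then gets (i)$\Rightarrow$(ii) by a purely algebraic application of \eqref{characterised} and \eqref{H transpose}, with (ii)$\Rightarrow$(i) from $B_{\Transpose\Transpose}=B$; you instead organise everything around (i)$\Leftrightarrow$(iii). However, there is one genuine flaw: your justification of $\ol{B_\Transpose}=(\ol{B})_\Transpose$ --- and hence of the invariance of condition (iii) under $B\mapsto B_\Transpose$, on which your entire derivation of (ii)$\Leftrightarrow$(iii) and of $B_{\Transpose\dagger}=\ol{B}$ rests --- ``by linearity and Hilbert--Schmidt continuity'' does not work. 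The conjugation $B\mapsto\ol{B}=(k_2\ot S_\xi)Bk_1$ is an \emph{unbounded} (closed) conjugate-linear operator on the Hilbert--Schmidt space, because $S_\xi$ is unbounded; HS-norm continuity therefore cannot carry an identity from the simple tensors $A\ot\ket{\eta}$, $\eta\in\Dom S_\xi$, to a general $B$ satisfying (iii). At minimum you would need to show that those simple tensors form a \emph{core} for the closed conjugation (true, via the identification of $HS(\noise_1;\noise_2\ot\Hil)$ with $\ol{\noise_1}\ot\noise_2\ot\Hil$, but that is a further argument, not continuity), and moreover that your pointwise formulation of (iii) coincides with membership in the domain of that closed operator.

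The gap is local and repairable by the very technique you already use twice. Assuming (iii) for $B$, test $B_\Transpose k_2 c_2$ against the core $\noise_1\otul\Xi$ of $k_1\ot F_\xi$: by \eqref{H transpose} and then the pairing of $k_2\ot S_\xi$ with $k_2\ot F_\xi$ (legitimate since (iii) places $\Ran B$ in $\Dom k_2\ot S_\xi$), one finds
\begin{equation*}
\bip{(k_1\ot F_\xi)(c_1\ot x'\xi)}{B_\Transpose k_2 c_2}
=
\bip{(\ol{B})_\Transpose c_2}{c_1\ot x'\xi},
\end{equation*}
whence closedness of $k_1\ot S_\xi$ gives (iii) for $B_\Transpose$ with $\ol{B_\Transpose}=(\ol{B})_\Transpose$, and the converse follows from $B_{\Transpose\Transpose}=B$. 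Alternatively you can bypass the invariance claim altogether by the paper's algebraic step: given (i), for $c_i\in\noise_i$ and $x\in\Mil'$,
\begin{equation*}
\ip{B_\Transpose c_2}{c_1 \ot x^* \xi}
=
\ip{B \ol{c_1}}{\ol{c_2} \ot x^* \xi}
=
\ip{\ol{c_1}\ot x \xi}{B_\dagger \ol{c_2}}
=
\ip{c_2 \ot x \xi}{B_{\dagger \Transpose} c_1},
\end{equation*}
which exhibits $B_\Transpose\in B^\ddagger_{\Mil,\xi}(\noise_2;\noise_1\ot\Hil)$ with $B_{\Transpose\dagger}=B_{\dagger\Transpose}\in HS(\noise_2;\noise_1\ot\Hil)$, with no domain issues since $B_\dagger$ is bounded. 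With either repair your proof is complete and correct.
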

\begin{proof}
 For all $c_1\in\noise_1$, $c_2\in\noise_2$ and $x'\in\Mil'$,
 \[
 \ip{B_\Transpose c_2}{c_1 \ot x'^* \xi}
 =
  \ip{B k_1 c_1}{(k_2\ot F_\xi)(c_2 \ot x' \xi)}.
 \]
 Since
 $\noise_2\otul \Xi$ is a core for $k_2\ot F_\xi$ and
 $(k_2\ot F_\xi)^* = k_2\ot S_\xi$,
 it follows
 (using the characterisation~\eqref{characterised})
 that (ii) and (iii) are equivalent,
 and also that when they hold,
 $\ol{B} = B_{\Transpose \dagger}$.

 If (i) holds
 then,
 for all $c_1\in\noise_1$, $c_2\in\noise_2$ and $x\in\Mil'$,
 \begin{align*}
 \ip{B_\Transpose c_2}{c_1 \ot x^* \xi}
 &=
 \ip{B \ol{c_1}}{\ol{c_2} \ot x^* \xi}
 \\
 &=
 \ip{\ol{c_1}\ot x \xi}{B_\dagger \ol{c_2}}
 =
 \ip{c_2 \ot x \xi}{B_{\dagger \Transpose} c_1}
 \end{align*}
 so, by the characterisation~\eqref{characterised},
 \[
 B_\Transpose \in
 B_{\Mil,\xi}^\ddagger(\noise_2;\noise_1 \ot \Hil)
 \
 \text{ and } \
  B_{\Transpose \dagger} =
 B_{\dagger \Transpose} \in
 HS(\noise_2; \noise_1 \ot \Hil).
 \]
 Thus (i) implies (ii), and since $B_{\Transpose \Transpose} = B$,
 also (ii) implies (i).
 This completes the proof.
\end{proof}

 \begin{defn}
 The
 $(\noise_1,\noise_2)$-\emph{adjointable matrix space associated to}
 $(\Mil,\xi)$ is the class of operators defined by
 \begin{multline*}
 \umatrixdagger_{\noise_1;\noise_2}(\Mil, \xi) :=
 \\
 \big\{
 T \in
 \umatrix_{\noise_1, \noise_2}(\Mil,\xi)
 \cap
 \Opdagger_\Mil(\noise_1\otul \Xi, \noise_2\otul \Xi):
 T^\dagger \in
 \umatrix_{\noise_2, \noise_1}(\Mil,\xi)
 \big\}
 \end{multline*}
 The corresponding \emph{column and row spaces} are given by
\[
 \ucoldagger_{\noise}(\Mil, \xi) := \umatrixdagger_{\Comp;\noise}(\Mil, \xi)
 \ \text{ and } \
  \urowdagger_{\noise}(\Mil, \xi) :=
 \umatrixdagger_{\noise;\Comp}(\Mil, \xi).
 \]
 \end{defn}

 We now have a matrix space of affiliated operators
 having adjoints and transposes;
 the key properties are summarised next.

 \begin{thm}
 \label{Theorem 3.2}
 The following hold
 \begin{align*}
 &\umatrixdagger_{\noise_1;\noise_2}(\Mil, \xi)=
 \big\{
 B^\xi: B \in HS^\ddagger_{\Mil,\xi}(\noise_1;\noise_2\ot\Hil)
 \big\}
 \\
 &
 \qquad\qquad\quad\
 =
 \big\{
 T\in \Opdagger_\Mil(\noise_1 \ot \Xi, \noise_2 \otul \Xi):
 T E_\xi \in HS(\noise_1; \noise_2\ot\Hil),
 T^\dagger E_\xi \in HS(\noise_2; \noise_1\ot\Hil)
 \big\}
 ;
 \\
 &
 HS(\noise_1;\noise_2) \otul \Opdagger_\Mil(\Xi) \subset
 \umatrixdagger_{\noise_1;\noise_2}(\Mil,\xi)),
 \text{ with equality if }
 \Mil = \Comp, \text{ or if } \noise_1=\noise_2 = \Comp;
 \\
 &(H \ot X)^{\Transpose \dagger} =
 H^{\transpose \dagger} \ot X^\dagger =
 \ol{H} \ot X^\dagger
 \qquad
 (H \in HS(\noise_1;\noise_2), X\in \Opdagger_\Mil(\Xi));
 \\
 &
 \ucoldagger_{\noise}(\Mil,\xi) =
 \big\{
 C \in
 \Opdagger_\Mil(\Xi, \noise\otul\Xi):
 C^\dagger E_\xi \in HS(\noise;\Hil)
 \big\}
 \text{ \tu{(}restoring symmetry with\tu{)}}
 \\
 &
 \urowdagger_{\noise}(\Mil,\xi) =
 \big\{
 R \in
 \Opdagger_\Mil(\noise\otul\Xi, \Xi):
 R E_\xi \in HS(\noise;\Hil)
 \big\}
 \text{ \tu{(}but also\tu{)}}
 \\
 &\ucoldagger_{\noise}(\Mil,\xi) =
 \big\{
 \ket{\zeta}^\xi:
  \zeta \in \Dom k \ot S_\xi
 \big\}
 =
 \big\{
 C \in \Op_\Mil(\Xi; \noise\ot\Hil): C \xi \in \Dom k \ot S_\xi
 \big\}.
\end{align*}
Moreover, for all
 $T \in \umatrixdagger_{\noise_1;\noise_2}(\Mil, \xi)$
  and $C \in \ucoldagger_\noise(\Mil, \xi)$,
\begin{align}
 &T^\dagger, T^\Transpose \in
 \umatrixdagger_{\noise_2;\noise_1}(\Mil, \xi),
 \quad
 T^{\Transpose \Transpose} = T^{\dagger \dagger} = T,
 \quad
 T^{\dagger \Transpose} = T^{\Transpose \dagger},
 \nonumber
 \\
 &T^{\dagger \Transpose} E_\xi =
 (k_2\ot S_\xi) T E_\xi k_1,
 \ \text{ and } \
  C^{\dagger \Transpose} \xi = (k \ot S_\xi) C \xi.
 \label{M c xi}
\end{align}
\end{thm}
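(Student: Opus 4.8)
The plan is to transport every assertion through the linear isomorphism $B \mapsto B^\xi$ of Proposition~\ref{when D =}, whose inverse is $T \mapsto T E_\xi$, so that each claim about the adjointable matrix space reduces to a claim about Hilbert--Schmidt operators already settled in Proposition~\ref{propn: HSdagger characterised}. Throughout I would write $B := T E_\xi$ and lean on two intertwining relations: $T^\dagger E_\xi = (T E_\xi)_\dagger = B_\dagger$, supplied by Proposition~\ref{when D =}, and $T^\Transpose E_\xi = B_\Transpose$, which is built into the definition of the transpose on $\umatrix$. With these in hand the whole theorem becomes a matter of reading the Hilbert--Schmidt-level facts back through the isomorphism.

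First I would establish the two displayed characterisations of $\umatrixdagger_{\noise_1;\noise_2}(\Mil,\xi)$. Unfolding the definition, $T$ lies in this space exactly when $T \in \Opdagger_\Mil(\noise_1\otul\Xi,\noise_2\otul\Xi)$ with $B \in HS$ and $T^\dagger E_\xi = B_\dagger \in HS$. Since membership in $\Opdagger_\Mil$ corresponds under the isomorphism to $B \in B^\ddagger_{\Mil,\xi}$, these conditions together say precisely that $B \in HS^\ddagger_{\Mil,\xi}(\noise_1;\noise_2\ot\Hil)$, which is the first description; rewriting the same conditions in terms of $T E_\xi$ and $T^\dagger E_\xi$ gives the second displayed form.

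Next I would treat the inclusion $HS(\noise_1;\noise_2)\otul\Opdagger_\Mil(\Xi) \subset \umatrixdagger_{\noise_1;\noise_2}(\Mil,\xi)$, the transpose--dagger identity, and the column/row spaces. By linearity it suffices to check a simple tensor $H\ot X$, for which the Remarks after Proposition~\ref{when D =} give $(H\ot X)E_\xi = H\ot\ket{X\xi}$; since $X\xi\in\Dom S_\xi$ by~\eqref{eqn: opdagger dom S}, criterion~(iii) of Proposition~\ref{propn: HSdagger characterised} applies and shows $H\ot\ket{X\xi}\in HS^\ddagger_{\Mil,\xi}$, giving the inclusion; the equality cases $\Mil=\Comp$ and $\noise_1=\noise_2=\Comp$ follow from the corresponding equalities already recorded for $\umatrix$, since in each the space collapses and every operator present is automatically adjointable. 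The formula then comes from $(H\ot X)^\Transpose = H^\transpose\ot X$ followed by $(H^\transpose\ot X)^\dagger = (H^\transpose)^*\ot X^\dagger = \ol{H}\ot X^\dagger$. For $\ucoldagger_\noise$ and $\urowdagger_\noise$ I would specialise $\noise_1$ or $\noise_2$ to $\Comp$, using that every operator out of $\Comp$ is automatically Hilbert--Schmidt so that one $HS$ condition becomes vacuous; the alternative description of $\ucoldagger_\noise$ follows by applying criterion~(iii) to $B=\ket{\zeta}$ with $\zeta=C\xi$, where the range condition reduces to $\zeta\in\Dom(k\ot S_\xi)$.

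Finally, for the \emph{Moreover} identities everything is read off at the level of $B$. That $T^\dagger,T^\Transpose\in\umatrixdagger_{\noise_2;\noise_1}(\Mil,\xi)$ follows from the symmetry of the defining conditions of $HS^\ddagger_{\Mil,\xi}$ under $B\mapsto B_\dagger$ together with the equivalence (i)$\Leftrightarrow$(ii); the involutivity $T^{\Transpose\Transpose}=T^{\dagger\dagger}=T$ and the commutation $T^{\dagger\Transpose}=T^{\Transpose\dagger}$ transport directly from $B_{\Transpose\Transpose}=B$, $B_{\dagger\dagger}=B$ and $B_{\dagger\Transpose}=B_{\Transpose\dagger}$. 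The key formula~\eqref{M c xi} is then immediate: $T^{\dagger\Transpose}E_\xi = B_{\dagger\Transpose} = \ol{B} = (k_2\ot S_\xi)T E_\xi k_1$, and in the column case, evaluating at $\xi$ and using $C^{\dagger\Transpose}E_\xi=\ket{C^{\dagger\Transpose}\xi}$ with $\ol{\ket{\zeta}}=\ket{(k\ot S_\xi)\zeta}$ yields $C^{\dagger\Transpose}\xi=(k\ot S_\xi)C\xi$. I expect no genuine analytic difficulty here, since domains, Hilbert--Schmidt stability, and the $S_\xi/F_\xi$ adjoint relation are all packaged into Proposition~\ref{propn: HSdagger characterised}; the only real care is bookkeeping — confirming that $T\mapsto T E_\xi$ simultaneously intertwines the transpose and the dagger, and tracking the conjugate-linearity of $k\ot S_\xi$ when extracting the vector identities.
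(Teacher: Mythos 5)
Your proposal is correct and follows precisely the route the paper intends: Theorem~\ref{Theorem 3.2} is stated there without a separate proof, being presented as an immediate corollary of Proposition~\ref{propn: HSdagger characterised} transported through the matrix--operator correspondence of Proposition~\ref{when D =}, which is exactly the reduction you carry out via the intertwining relations $T^\dagger E_\xi = (T E_\xi)_\dagger$ and $T^\Transpose E_\xi = (T E_\xi)_\Transpose$. Your simple-tensor and rank-one checks (including the automatic boundedness of $T E_\xi$ for adjointable $T$, which makes the Hilbert--Schmidt conditions vacuous in the column and row cases) correctly supply the bookkeeping the paper leaves implicit.
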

\begin{rems}
 Note further that
\[
 \big(B(\noise_2;\noise_3) \ot I_\Hil\big)
 \umatrixdagger_{\noise_1;\noise_2}(\Mil,\xi)
 \big( B(\noise_0;\noise_1)\ot I_\Hil \big)
 \subset
 \umatrixdagger_{\noise_0;\noise_3}(\Mil,\xi),
 \]
 $\umatrixdagger_{\noise_1;\noise_2}(\Mil,\xi)$
 is a left $B(\noise_2)$-module
 and a right $B(\noise_1)$-module.
\end{rems}

 The relationship between the various spaces is
 seen in the following commutative diagram, in which
 the horizontal arrows represent linear isomorphisms
 and all other arrows represent inclusions.
\[
 \xymatrix{
 \Op_\Mil(\noise_1 \otul \Xi; \noise_2 \ot \Hil)
 \ar[r] &
  \Op(\noise_1; \noise_2 \ot \Hil)
 &
 \\
   \Opdagger_\Mil(\noise_1 \otul \Xi, \noise_2 \otul \Xi)
  \ar[u]
  \ar[r] &
  B^\ddagger_{\Mil,\xi}(\noise_1; \noise_2\ot\Hil)
  \ar[u]
 &
 \\
  \umatrixdagger_{\noise_1,\noise_2}(\Mil,\xi)
 \ar[u]
 \ar[d]
 \ar[r] &
 HS^\ddagger_{\Mil,\xi}(\noise_1; \noise_2\ot\Hil)
 \ar[u]
 \ar[d]
 &
 \\
  \umatrix_{\noise_1, \noise_2}(\Mil,\xi)
  \ar@/_-4pc/[uuu]
    \ar[r] &
   HS(\noise_1; \noise_2\ot\Hil)
    \ar@/_4pc/[uuu]
   }
\]

 We end this section by introducing a transform
 between matrices and columns which is
 one of the ingredients of the construction of
 quasifree integrals in Section~\ref{section: qf integrals}.
 Denote by
 $\pi$ the sum-flips on both $\noisetwo$ and
 $\noisetwo \ot \Hil = (\noise \ot \Hil)^{\op 2}$,
 set $\khat := \Comp \op \noise$,
 \begin{align*}
 &\umatrix_{\khat}(\Mil, \xi)_0 :=
 \Big\{
 \left[\begin{smallmatrix} & R \\ C & \end{smallmatrix}\right]:
 C \in \ucol_\noise(\Mil,\xi) \text{ and }
 R \in \urow_\noise(\Mil,\xi)
 \Big\}, \text{ and }
 \\
 &\umatrixdagger_{\khat}(\Mil, \xi)_0 :=
 \umatrix_{\khat}(\Mil, \xi)_0 \cap\umatrixdagger_{\khat}(\Mil,\xi),
 \end{align*}
  and set
  $k^\pi := (k \oplus k) \circ \pi$.
  \begin{cor}
 \label{Corollary 3.3}
 The map
 \[
 \umatrix_{\khat}(\Mil, \xi)_0 \to \ucol_{\noisetwo}(\Mil,\xi) =
 \Op_\Mil(\Xi; \noisetwo \ot \Hil),
 \quad
 T =
  \begin{bmatrix} & R \\ C & \end{bmatrix}
 \mapsto
 \colT :=
 \begin{bmatrix} C \\ R^\Transpose \end{bmatrix}
 \]
 is a linear isomorphism which restricts to an isomorphism
 $\umatrixdagger_{\khat}(\Mil,\xi)_0 \to
 \ucoldagger_{\noisetwo}(\Mil,\xi)$
 satisfying
 $\colTdagger = \pi \circ \colTconjugate$, and thus
  \[
 \colTdagger \xi =
 ( k^\pi \ot S_\xi ) \colT \xi.
 \]
 \end{cor}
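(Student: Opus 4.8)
The plan is to exploit the antidiagonal block form of $T = \left[\begin{smallmatrix} & R \\ C & \end{smallmatrix}\right]$ over $\khat = \Comp\oplus\noise$ and to reduce every assertion to the transpose--dagger calculus established in Theorem~\ref{Theorem 3.2}. First I would record that the map is linear: it is built from the linear coordinate projections $T\mapsto C$, $T\mapsto R$ and the matrix transpose $R\mapsto R^\Transpose$, which is linear on $\umatrix_{\noise;\Comp}(\Mil,\xi)$. An explicit two-sided inverse is $\left[\begin{smallmatrix} C_1 \\ C_2\end{smallmatrix}\right]\mapsto\left[\begin{smallmatrix} & C_2^\Transpose \\ C_1 & \end{smallmatrix}\right]$; that these are mutually inverse is immediate from the involutivity $R^{\Transpose\Transpose}=R$.

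For the restriction to the adjointable spaces I would argue that, for the antidiagonal $T$, membership in $\umatrixdagger_{\khat}(\Mil,\xi)_0$ is equivalent to $C\in\ucoldagger_\noise(\Mil,\xi)$ together with $R\in\urowdagger_\noise(\Mil,\xi)$: adjointability of an antidiagonal matrix reduces via the direct-sum structure to that of its two entries, and then $T^\dagger = \left[\begin{smallmatrix} & C^\dagger \\ R^\dagger & \end{smallmatrix}\right]$ is again of the required shape. On the column side, the characterisation $\ucoldagger_{\noisetwo}(\Mil,\xi) = \{C':C'\xi\in\Dom((k\oplus k)\ot S_\xi)\}$ from Theorem~\ref{Theorem 3.2} shows, because $\colT\xi = \left[\begin{smallmatrix} C\xi \\ R^\Transpose\xi\end{smallmatrix}\right]$ lies in the domain of the diagonally-acting $(k\oplus k)\ot S_\xi$ precisely when each component lies in $\Dom(k\ot S_\xi)$, that $\colT\in\ucoldagger_{\noisetwo}(\Mil,\xi)$ is equivalent to $C\in\ucoldagger_\noise(\Mil,\xi)$ and $R^\Transpose\in\ucoldagger_\noise(\Mil,\xi)$. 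Finally, that the transpose preserves adjointability (Theorem~\ref{Theorem 3.2}) converts $R^\Transpose\in\ucoldagger_\noise$ into $R\in\urowdagger_\noise$, matching the two descriptions.

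The identity $\colTdagger = \pi\circ\colTconjugate$ I would verify by computing both sides in block form. Applying the map to $T^\dagger = \left[\begin{smallmatrix} & C^\dagger \\ R^\dagger & \end{smallmatrix}\right]$ gives $\colTdagger = \left[\begin{smallmatrix} R^\dagger \\ (C^\dagger)^\Transpose\end{smallmatrix}\right]$. For the other side I would compute $\colTconjugate = (\colT)^{\dagger\Transpose}$ via the commutation $(\cdot)^{\dagger\Transpose} = (\cdot)^{\Transpose\dagger}$ of Theorem~\ref{Theorem 3.2}: transposing the column $\left[\begin{smallmatrix} C \\ R^\Transpose\end{smallmatrix}\right]$ blockwise yields the row $\left[\begin{smallmatrix} C^\Transpose & R\end{smallmatrix}\right]$ (using $R^{\Transpose\Transpose}=R$), and daggering then gives the column $\left[\begin{smallmatrix} C^{\Transpose\dagger} \\ R^\dagger\end{smallmatrix}\right]$. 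The sum-flip $\pi$ swaps these entries, producing $\left[\begin{smallmatrix} R^\dagger \\ C^{\Transpose\dagger}\end{smallmatrix}\right]$, which equals $\colTdagger$ once $C^{\Transpose\dagger}=C^{\dagger\Transpose}$ is invoked once more.

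The vector identity then follows formally. Applying the column relation $C^{\dagger\Transpose}\xi = (k\ot S_\xi)C\xi$ of Theorem~\ref{Theorem 3.2} to $\colT\in\ucoldagger_{\noisetwo}(\Mil,\xi)$ gives $\colTconjugate\xi = ((k\oplus k)\ot S_\xi)\colT\xi$, whence $\colTdagger\xi = \pi\,((k\oplus k)\ot S_\xi)\colT\xi = (k^\pi\ot S_\xi)\colT\xi$, the last step because $k^\pi\ot S_\xi = \pi\circ((k\oplus k)\ot S_\xi)$ and $\pi$ commutes with the diagonally-acting $(k\oplus k)\ot S_\xi$. The main obstacle I anticipate is the second and third steps: making rigorous the blockwise behaviour of the transpose and dagger on columns and rows over the direct sum $\noisetwo = \noise\oplus\noise$ --- that $\Transpose$ acts entrywise and interchanges row/column shape, that $\dagger$ acts entrywise, and that $\pi$ permutes the summands compatibly --- all of which ultimately rest on the direct-sum decomposition of the Hilbert--Schmidt classes underpinning the partial transpose $H\mapsto H_\Transpose$ of~\eqref{H transpose}.
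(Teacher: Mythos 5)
Your proposal is correct and takes essentially the same route the paper intends: Corollary~\ref{Corollary 3.3} is stated without proof as an immediate consequence of Theorem~\ref{Theorem 3.2}, and your blockwise verification --- splitting $\khat \otul \Xi$ and $\noisetwo \ot \Hil$ as direct sums, using the involutivity $T^{\Transpose\Transpose}=T$, the commutation $T^{\dagger\Transpose}=T^{\Transpose\dagger}$, and the column identity $C^{\dagger\Transpose}\xi = (k \ot S_\xi) C \xi$ from~\eqref{M c xi} --- is precisely the routine computation the authors omit. Nothing in your argument deviates from, or goes beyond, the transpose--dagger calculus of Section~\ref{section: transpose and conjugate}.
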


\section{It\^o integral and commutation relations}
 \label{section: Ito commutation relations}

 In this section we prove a commutation relation between
 second quantisation and the abstract It\^o integral.
 First we set up notation for stochastic analysis in
 Fock space.
 \emph{Fix a Hilbert space $\init$ and
 a separable Hilbert space $\noise$}.
 For a subinterval $I$ of $\Rplus$, set
 \[
 \Kil_I = L^2(I;\noise),\
 \Fock_{\noise, I} = \Gamma(\Kil_I), \
 \initFock_{\noise, I} = \init \ot \Fock_{\noise, I},
 \text{ and }
 \Omega_{\noise, I} = (1, 0, 0, \cdots ) \in \Fock_{\noise, I},
 \]
 dropping the $I$ when it is all of $\Rplus$.
 The tensor decompositions
 \[
 \initFocknoise = \initFock_{\noise, {[0,s[}} \ot
 \Fock_{\noise, [s,t[} \ot
 \Fock_{\noise, [t,\infty[}
 \qquad
 (0\leq s \leq t \leq \infty)
 \]
 are witnessed by exponential vectors.
 Write
  \begin{equation}
 \label{pt and Pt}
 p_t \text{ for } M_{1_{[0,t[}} \text{ on } \Kil\
 \text{ and } \
 P_t \text{ for }
 I_\hil \ot \Gamma(p_t) \text{ on } \hil \ot \Fock_\noise
 \qquad
 (t\geq 0),
 \end{equation}
 where
 $M$ denotes multiplication operator and
 $\hil$ can be $\Comp$, $\init$ (or $\noise \ot \init$),
 depending on context,
 and let
 $\Kil_t$, $\Fock_{\noise, t}$ and $\initFock_{\noise, t}$ be the
 images of the respective orthogonal projections.
 Then
 $\Kil \ot \initFocknoise = L^2(\Rplus; \noise \ot \initFocknoise)$
 and, by Fubini's Theorem,
 \begin{align}
 &\big\{
 y \in \Kil \ot \initFocknoise:
 \text{ for a.a. } t\in\Rplus,
 y_t = y_{t)} \ot \Omega_{\noise, [t,\infty[}
 \text{ for some } y_{t)} \in \initFock_{\noise, {[0,t[}}
 \big\}
 \ \text{and}
 \nonumber
  \\
  &\big\{
 y \in \Kil \ot \initFocknoise:
 \forall_{t\ge 0} \,
 (p_t \ot I_{\initFocknoise}) y \in \Kil \ot \initFock_{\noise, t}
 \big\},
   \label{L2Omega characterisation}
  \end{align}
 coincide; the common subspace is called
 the \emph{$\Omega$-adapted subspace} of $\Kil \ot \initFocknoise$,
 and is denoted
 $L^2_\Omega\big(\Rplus; \noise \ot \initFocknoise\big)$.
 Let $V_\Omega$ denote the inclusion
 $L^2_\Omega\big(\Rplus; \noise \ot \initFocknoise\big)
 \to \Kil \ot \initFocknoise$,
 and $P_\Omega$ the orthogonal projection $V_\Omega V_\Omega^*$.
 Recall the gradient operator defined in the introduction
 and the convention on ampliation. The operator
 $V_\Omega^* \nabla$ is bounded and $D := \ol{V_\Omega^* \nabla}$
 is a surjective partial isometry with kernel $\initFock_{\noise, 0}$,
 which is called the \emph{adapted gradient operator} (\cite{AtL}).
 The It\^o integral is the isometry
 \[
 \Ito := D^* = \Skorohod V_\Omega:
  L^2_\Omega\big(\Rplus; \noise \ot \initFocknoise\big)
  \to \initFocknoise;
  \]
 the divergence operator $\Skorohod := \nabla^*$ being an abstract
 Hitsuda-Skorohod integral
 (\cite{LQSI}).
 We further define
 \[
 L^2_{\Omega, \loc} \big(\Rplus; \noise \ot \initFocknoise\big) :=
 \big\{
 y \in
 L^2_{\loc} \big(\Rplus; \noise \ot \initFocknoise\big):
 \forall_{t\ge 0} \
 y_{[0,t[} \in \Kil \ot \initFock_{\noise, t}
 \big\},
 \]
 and for $t\in\Rplus$ and
 $z\in L^2_{\Omega, \loc} \big(\Rplus; \noise \ot \initFocknoise\big)$,
 $\Ito_t y := \Ito y_{[0,t[}$.
 The following
 characterisation of operators affiliated to the
 von Neumann algebra
 $L^\infty(\Rplus) \otol B(\noise)$
 is useful.

 \begin{lemma}
 \label{invariance = affiliation}
 Let $T$ be a closed and densely defined operator on $\Kil$.
 Then the following are equivalent.
 \begin{rlist}
 \item
 $T$ is affiliated to $L^\infty(\Rplus) \otol B(\noise)$.
 \item
 $T$ satisfies the invariance condition
 \begin{equation}
 \label{Dom T pt invariant}
 T p_t \supset p_t T
 \qquad
 (t\geq 0).
 \end{equation}
 \item
 $T$ is `pointwise adjointable', that is
 for all $f\in \Dom T^*$ and $g\in\Dom T$,
 \[
 \ip{f(t)}{(Tg)(t)} = \ip{(T^*f)(t)}{g(t)}
 \qquad
 \text{ for a.a. } t \geq 0.
 \]
 \end{rlist}
 \end{lemma}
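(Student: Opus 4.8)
The plan is to prove the three conditions equivalent by establishing $\mathrm{(i)} \Rightarrow \mathrm{(ii)} \Rightarrow \mathrm{(iii)} \Rightarrow \mathrm{(i)}$, exploiting the fact that the commutant of $L^\infty(\Rplus) \otol B(\noise)$ is $L^\infty(\Rplus) \ot \Comp I_\noise$, whose unitaries are precisely the operators $M_\phi \ot I_\noise$ for $\phi \in L^\infty(\Rplus)$ with $|\phi| = 1$ a.e. Thus affiliation of $T$ amounts to the invariance $(M_{\ol\phi} \ot I_\noise)\, T\, (M_\phi \ot I_\noise) = T$ for all such unimodular $\phi$, together with the corresponding invariance of $\Dom T$.

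First I would treat $\mathrm{(i)} \Rightarrow \mathrm{(ii)}$. Since each $p_t = M_{1_{[0,t[}} \ot I_\noise$ lies in the commutant, I approximate $p_t$ by unitaries from that abelian von Neumann algebra---for instance via $\tfrac12(I + (2p_t - I)e^{is})$ as $s$ varies, or more directly by noting $p_t$ is a weak limit of convex combinations of the unimodular multipliers $M_\phi \ot I_\noise$. Using affiliation, $(I - p_t)$-invariance of the graph of $T$ then forces $p_t \Dom T \subset \Dom T$ and $T p_t g = p_t T g$ for $g \in \Dom T$, which is exactly the inclusion $Tp_t \supset p_t T$ of~\eqref{Dom T pt invariant}. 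The converse direction $\mathrm{(ii)} \Rightarrow \mathrm{(i)}$ reverses this: the projections $\{p_t : t \ge 0\}$ generate the maximal abelian subalgebra $L^\infty(\Rplus) \ot \Comp I_\noise$ (the commutant), so commutation with every $p_t$ upgrades, via the spectral theorem and the bicommutant, to the required intertwining with every unitary in the commutant, giving affiliation.

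For $\mathrm{(ii)} \Leftrightarrow \mathrm{(iii)}$, I would pass through the identification $\Kil = L^2(\Rplus; \noise) = \int^\oplus_{\Rplus} \noise \, dt$, under which condition~\eqref{Dom T pt invariant} says precisely that $T$ respects the filtration of subspaces $\Kil_t = L^2([0,t[; \noise)$ for both $T$ and (by taking adjoints of the inclusion) $T^*$. The pointwise-adjointability identity in~\rref{enumi} is then obtained by a differentiation-in-$t$ argument: for $f \in \Dom T^*$ and $g \in \Dom T$, the invariance gives $\ip{f}{Tp_t g} = \ip{p_t T^* f}{g}$, i.e. $\int_0^t \ip{f(s)}{(Tg)(s)}\,ds = \int_0^t \ip{(T^*f)(s)}{g(s)}\,ds$ for all $t$, whence the integrands agree for almost all $s$ by the Lebesgue differentiation theorem. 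Conversely, the pointwise identity integrated over $[0,t[$ recovers $\ip{f}{p_t Tg} = \ip{f}{Tp_t g}$ for all $f \in \Dom T^*$, which (since $T$ is densely defined and closed) yields $p_t Tg = Tp_t g$ and hence~\eqref{Dom T pt invariant}.

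The main obstacle I anticipate is the careful handling of domains in the unbounded setting, since $T$ is only densely defined: one must be scrupulous that $p_t$ maps $\Dom T$ into itself before asserting $Tp_t g = p_t Tg$, and that the weak/strong approximation of $p_t$ by commutant unitaries genuinely transfers the invariance of the \emph{graph} (not merely a formal algebraic commutation) to the invariance of the domain. The cleanest route past this is to phrase affiliation through the graph projection $P_G$, as in the Remark following the definition of affiliated operators, so that all the approximation arguments take place among bounded operators and the graph is manipulated directly; the domain invariance then falls out automatically rather than needing separate justification.
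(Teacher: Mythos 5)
Your proposal is correct, and on the two legs where it diverges from the paper the comparison is instructive. For (ii) $\Rightarrow$ (i) the paper stays inside measure theory: fixing $f \in \Dom T^*$ and $g \in \Dom T$, it observes that the set of $\varphi$ in the unit ball of $L^\infty(\Rplus)$ satisfying $\ip{T^*f}{\varphi\cdot g} = \ip{f}{\varphi\cdot Tg}$ is weak*-compact, hence weak*-closed, and contains all step functions (by (ii) and linearity); since these are weak*-dense in the unit ball, the identity holds for all contractive $\varphi$, and taking $\varphi$ unimodular yields affiliation. You instead argue von Neumann algebraically: the $p_t$ generate $L^\infty(\Rplus)\ot\Comp I_\noise$ as a von Neumann algebra, and commutation passes to the generated algebra. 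That route is sound, but the crux you leave under the slogan ``spectral theorem and bicommutant'' should be made explicit: for \emph{closed} $T$ the set $\{S \in B(\Kil): ST \subset TS\}$ is a strongly closed algebra (if $S_\alpha \to S$ strongly with $S_\alpha Tg = TS_\alpha g$, then closedness of $T$ gives $Sg \in \Dom T$ and $TSg = STg$), and since the generators $p_t$ are self-adjoint, the $*$-algebra they generate lies in this set, so von Neumann's density theorem transfers the relation to every element, in particular every unitary, of $L^\infty(\Rplus)\ot\Comp I_\noise$; then $uT\subset Tu$ together with $u^*T \subset Tu^*$ gives $u^*Tu = T$. Two slips to repair: $L^\infty(\Rplus)\ot\Comp I_\noise$ is \emph{not} maximal abelian when $\dim\noise > 1$ --- what you need, and what is true and stated in the paper, is that it is the commutant of $L^\infty(\Rplus)\otol B(\noise)$; and your family $\tfrac12\bigl(I + (2p_t - I)e^{is}\bigr)$ is not unitary --- the exact identity $p_t = \tfrac12(I + v_t)$ with $v_t := 2p_t - I$ unitary is all that your (i) $\Rightarrow$ (ii) needs, with no limiting procedure at all (this is also what the paper's one-line (i) $\Rightarrow$ (ii) implicitly uses). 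Finally, you connect (ii) $\Leftrightarrow$ (iii) directly, via the cumulative identity $\int_0^t \ip{f(s)}{(Tg)(s)}\,ds = \int_0^t \ip{(T^*f)(s)}{g(s)}\,ds$ and Lebesgue differentiation, whereas the paper connects (i) $\Leftrightarrow$ (iii) through the $L^\infty$--$L^1$ pairing; your version is the more elementary, while the paper's has the merit that the same $\varphi$-indexed family of identities simultaneously powers its (ii) $\Rightarrow$ (i) argument. (The mismatch between your announced cycle (i)$\Rightarrow$(ii)$\Rightarrow$(iii)$\Rightarrow$(i) and the implications you actually deliver, namely (i)$\Leftrightarrow$(ii) and (ii)$\Leftrightarrow$(iii), is harmless, as these suffice for the equivalence.)
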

\begin{proof}
 Since, for all $t\geq 0$,
 $p_t \in L^\infty(\Rplus) \ot I_\noise$,
 the commutant of
 $L^\infty(\Rplus) \otol B(\noise)$,
 (i) implies (ii).
 On the other hand,
 viewing $L^\infty(\Rplus)$ as the dual of $L^1(\Rplus)$,
 for $f\in \Dom T^*$ and $g\in\Dom T$ the set
 \[
 \big\{
 \varphi \in L^\infty(\Rplus): \norm{\varphi}_\infty \leq 1
 \text{ and }
 \ip{T^* f}{\varphi \cdot g} = \ip{f}{\varphi \cdot T g}
 \big\}
 \]
 is compact and metrizable in the relative weak topology,
 and step functions with $L^\infty$-bound at most one
 are dense in the unit ball of $L^\infty(\Rplus)$.
 It follows that (ii) implies (i).

 The equivalence of (i) and (iii) is
 evident from the identities
 \begin{align*}
 &\int dt \varphi(t) \ip{f(t)}{(Tg)(t)}
 =
 \ip{f}{\varphi \cdot T g},
 \text{ and }
 \\
 &\ip{T^*f}{\varphi \cdot  g}
 =
 \int dt \varphi(t) \ip{(T^* f)(t)}{(g(t)},
 \end{align*}
 for $f\in \Dom T^*$ and $g\in\Dom T$
 and $\varphi \in L^\infty(\Rplus)$.
\end{proof}

\begin{rem}
 A good reference for the identification of
 $L^\infty(\Rplus) \otol \Mil$ and $L^\infty(\Rplus; \Mil)$,
 for a von Neumann algebra $\Mil$ with separable predual,
 is Theorem 1.22.13 of~\cite{Sakai}.
\end{rem}

 \begin{lemma}
 \label{lemma: tensor Omega}
 Let $R = T \ot X$, where $T$ and $X$ are closed densely defined operators
 on $\Kil$ and $\initFocknoise$ respectively,
 satisfying
 \begin{align*}
 &T\, \affiliated L^\infty(\Rplus) \otol B(\noise),
 \ \text{ equivalently }
 T p_t \supset p_t T,
 \ \text{ and }
 \\
 &
 X\big( \initFock_{\noise, t} \cap \Dom X \big) \subset \initFock_{\noise, t},
 \ \text{ equivalently } \
 X P_t = P_t X P_t
 \qquad
 (t \geq 0).
 \end{align*}
  Then
 \begin{align*}
 &(T\ot X) \big(
  L^2_\Omega(\Rplus; \noise \ot \initFocknoise)\ \cap\ \Dom\, T \ot X
 \big)
 \subset
 L^2_\Omega(\Rplus; \noise \ot \initFocknoise),
  \ \text{equivalently }
  \\
  &(T \ot X) P^\Omega = P^\Omega (T \ot X) P^\Omega.
 \end{align*}
\end{lemma}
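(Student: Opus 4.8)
The plan is to prove the equivalent subspace inclusion, working throughout with the second description of the $\Omega$-adapted subspace supplied by~\eqref{L2Omega characterisation}: a vector $y$ lies in $\LtwoOmega\big(\Rplus;\noise\ot\initFocknoise\big)$ precisely when $(p_t\ot I_{\initFocknoise})\,y\in\Kil\ot\initFock_{\noise,t}$ for every $t\ge 0$. Writing $R=T\ot X$, it therefore suffices to show that for each $y\in\LtwoOmega\big(\Rplus;\noise\ot\initFocknoise\big)\cap\Dom R$ and each $t\ge 0$ one has $(p_t\ot I)Ry\in\Kil\ot\initFock_{\noise,t}$. The argument separates cleanly according to the two hypotheses: the affiliation of $T$ lets me commute the time-truncation $p_t\ot I$ through $R$, and the invariance hypothesis on $X$ ensures that $R$ preserves $\Kil\ot\initFock_{\noise,t}$.

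For the first part I would use that the invariance condition $Tp_t\supset p_tT$ (equivalent to affiliation by Lemma~\ref{invariance = affiliation}) forces $p_t\Dom T\subset\Dom T$ and $p_tTg=Tp_tg$ for $g\in\Dom T$. Hence $p_t\ot I$ maps the core $\Dom T\otul\Dom X$ of $R$ into itself and commutes with $R$ there; as $p_t\ot I$ is bounded and $R$ is closed, a routine approximation along the core shows that $p_t\ot I$ preserves $\Dom R$ and that $(p_t\ot I)Ry=R\big((p_t\ot I)y\big)$ for $y\in\Dom R$. Thus $(p_t\ot I)Ry=Ry_1$ with $y_1:=(p_t\ot I)y$. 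Since $y$ is $\Omega$-adapted, $(p_t\ot I)y=(p_t\ot P_t)y\in\Kil\ot\initFock_{\noise,t}$, so that $y_1\in\big(\Kil\ot\initFock_{\noise,t}\big)\cap\Dom R$ and in particular $(I\ot P_t)y_1=y_1$.

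It then remains to show that $R$ maps $\big(\Kil\ot\initFock_{\noise,t}\big)\cap\Dom R$ into $\Kil\ot\initFock_{\noise,t}$; granting this, $Ry_1=(p_t\ot I)Ry\in\Kil\ot\initFock_{\noise,t}$ for all $t$, whence $Ry\in\LtwoOmega\big(\Rplus;\noise\ot\initFocknoise\big)$ as required. This is exactly the assertion that, given the closed operator $X$ leaving the closed subspace $\initFock_{\noise,t}$ invariant (the content of $XP_t=P_tXP_t$) and an arbitrary closed $T$, the tensor product $T\ot X$ leaves $\Kil\ot\initFock_{\noise,t}$ invariant, and I would record it as one of the appendix facts on tensor products of unbounded operators. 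I expect this to be the one delicate point, precisely because $P_t$ need not preserve $\Dom X$, so the computation on simple tensors does not extend verbatim to all of $\Dom R$. The clean route is via adjoints: since $(T\ot X)^*=T^*\ot X^*$, I would pair $Ry_1$ with vectors of $\Kil\ot\initFock_{\noise,t}^\perp$ lying in $\Dom R^*$, move $R$ across, and use $(I\ot P_t)y_1=y_1$ to reduce the required vanishing to the statement that $X^*$ leaves $\initFock_{\noise,t}^\perp$ invariant. This dual invariance is the genuine content, and it is here that the behaviour of the domains must be controlled; I would isolate it, together with the resulting tensorial invariance, as an appendix lemma on unbounded operators.
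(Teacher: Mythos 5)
Your overall skeleton is the same as the paper's: reduce to showing $(p_t\ot I)Ry\in\Kil\ot\initFock_{\noise,t}$ for each $t$ via the second characterisation in~\eqref{L2Omega characterisation}, commute the truncation through $R$ using the affiliation of $T$, and then invoke invariance of $\Kil\ot\initFock_{\noise,t}$ under $R$. Your first step is sound and is in substance identical to the paper's appeal to Part (f) of Proposition~\ref{Lemma 3}: the core-approximation argument using closedness of $R$ and boundedness of $p_t\ot I$ is exactly how that inclusion $R(p_t\ot I)\supset(p_t\ot I)R$ is obtained. You are also right to identify the tensorial invariance statement as the one delicate point.

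The gap is that you defer precisely this crux to an unproved lemma, and the route you sketch for it would not close as stated. You propose to pair $Ry_1$ against vectors $\eta\in(\Kil\ot\initFock_{\noise,t})^\perp\cap\Dom R^*$ and reduce to the invariance of $\initFock_{\noise,t}^\perp$ under $X^*$. Two separate points are then needed and neither is supplied. First, the dual invariance itself: it can be rescued from the hypothesis $XP_t=P_tXP_t$ by an adjoint computation --- this identity gives $(XP_t)^*=(XP_t)^*P_t$ together with $(XP_t)^*\supset P_tX^*$, whence $P_tX^*\eta=0$ for $\eta\in\Dom X^*\cap\initFock_{\noise,t}^\perp$ --- but only when $XP_t$ is densely defined, i.e.\ when $\initFock_{\noise,t}\cap\Dom X$ is dense in $\initFock_{\noise,t}$; without that, $(XP_t)^*$ does not exist and the dual invariance can genuinely fail (invariance of $\initFock_{\noise,t}$ under $X$ is vacuous when $\Dom X$ meets it trivially, yet it then imposes nothing on $X^*$). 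Second, and more seriously, your pairing argument concludes $Ry_1\in\Kil\ot\initFock_{\noise,t}$ only if the available test vectors, essentially $\Dom T^*\otul\bigl(\Dom X^*\cap\initFock_{\noise,t}^\perp\bigr)$, are total in $\Kil\ot\initFock_{\noise,t}^\perp$; this requires $\Dom X^*\cap\initFock_{\noise,t}^\perp$ to be dense in $\initFock_{\noise,t}^\perp$, which does not follow from the hypotheses and which you never address --- orthogonality to a non-total set of test vectors proves nothing. The paper avoids orthocomplements and adjoints of $X$ altogether: Corollary~\ref{tensor invariance} restricts to the invariant subspace via the inclusion maps, using the identity $TV=T_1V_1\ot T_2V_2$ from Part (b) of Proposition~\ref{Lemma 3}, so that $T\ot XV_2$ is by construction a closed operator with range in $\Kil\ot\initFock_{\noise,t}$. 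You should replace your duality step by this restriction argument (or prove both density statements above as standing hypotheses, which would narrow the scope of the lemma).
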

\begin{proof}
 Set $I := I_{\initFocknoise}$.
 By Part (f) of Proposition~\ref{Lemma 3} and Corollary~\ref{tensor invariance},
  we have
 \begin{alist}
 \item
  $R(p_t \ot I) \supset (p_t \ot I) R$, and
 \item
  $R \big( \Kil \ot \initFock_{\noise, t} \cap \Dom R \big)
  \subset \Kil \ot \initFock_{\noise, t}$,
  for all $t\geq 0$.
 \end{alist}
 Let
 $z \in L^2_\Omega(\Rplus; \noise \ot \initFocknoise) \cap \Dom R$ and
 $t\geq 0$.
 By adaptedness and (a),
 $(p_t \ot I) z \in (\Kil \ot \initFock_{\noise, t}) \cap \Dom R$ and
 \[
 R(p_t \ot I) z = (p_t \ot I) R z
 \]
 so, by (b), $R( p_t \ot I) z \in \Kil \ot \initFock_{\noise, t}$
 and thus $(p_t \ot I) R z \in \Kil \ot \initFock_{\noise, t}$.
 Therefore, by~\eqref{L2Omega characterisation},
  $R z\in L^2_\Omega(\Rplus; \noise \ot \initFocknoise)$,
 as required.
\end{proof}

\begin{notn}
 For operators $T$ and $X$ of the above form we set
 \begin{equation}
 \label{tensor Omega}
 T \ot_\Omega X := V_\Omega^* (T \ot X) V_\Omega
 \end{equation}
 where $V_\Omega$ is the inclusion map
 $L^2_\Omega(\Rplus; \noise \ot \initFocknoise)
 \to
 \Kil \ot \initFocknoise$.
 \end{notn}
 \begin{rem}
 Operators of the form $T \ot_\Omega X$ are closed,
 as is easily verified.
 \end{rem}

 The next two results involve the (ampliated) gradient operator
 on Fock space (which is defined in the introduction),
 and the second quantised operators of
 Proposition~\ref{second quantisation}.

\begin{lemma}
\label{gradient second quantisation commutation}
 Let $A$ and $T$ be closed densely defined operators on
 $\init$ and $\Kil$ respectively.
 Then
 \[
 \nabla \big( A \otul \Gamma(T)_| \big) \subset
 \big( T \otul A \otul \Gamma(T)_| \big) \nabla.
 \]
\end{lemma}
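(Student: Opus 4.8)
The plan is to verify the stated inclusion of operators by evaluating both sides on the exponential-vector core, exploiting the fact that the left-hand composite is already defined only on an algebraic tensor product. Write $\Exps(\Dom T) = \Lin\{\ve(v): v\in\Dom T\}$ for the exponential core on which $\Gamma(T)_|$ acts by $\ve(v)\mapsto\ve(Tv)$ (Proposition~\ref{second quantisation}), so that the domain of $A\otul\Gamma(T)_|$ is $\Dom A\otul\Exps(\Dom T)$. Since $(A\otul\Gamma(T)_|)(a\ot\ve(v)) = Aa\ot\ve(Tv)$ always lands in $\init\otul\Exps(\Kil)$, which lies inside the domain of the (ampliated, closed) gradient, the domain of the left-hand operator $\nabla(A\otul\Gamma(T)_|)$ is \emph{exactly} $\Dom A\otul\Exps(\Dom T)$. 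It therefore suffices to compute on simple tensors $a\ot\ve(v)$ with $a\in\Dom A$ and $v\in\Dom T$, and extend by linearity.

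For such a simple tensor I would use the two defining relations $\Gamma(T)\ve(v)=\ve(Tv)$ and $\nabla\ve(w)=w\ot\ve(w)$. Tracking the ampliation convention — under which the gradient applied on $\init\ot\Fock_\noise$ prepends the newly created $\Kil$-factor, sending $a\ot\psi$ with $\nabla\psi=\sum_i k_i\ot\phi_i$ to $\sum_i k_i\ot a\ot\phi_i$ — the left-hand side gives
\[
\nabla(A\otul\Gamma(T)_|)(a\ot\ve(v)) = \nabla(Aa\ot\ve(Tv)) = Tv\ot Aa\ot\ve(Tv),
\]
while the right-hand side gives
\[
(T\otul A\otul\Gamma(T)_|)\nabla(a\ot\ve(v)) = (T\otul A\otul\Gamma(T)_|)(v\ot a\ot\ve(v)) = Tv\ot Aa\ot\ve(Tv).
\]
The two agree. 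It remains to observe that $v\ot a\ot\ve(v)$ genuinely lies in $\Dom(T\otul A\otul\Gamma(T)_|) = \Dom T\otul\Dom A\otul\Exps(\Dom T)$ — which holds precisely because $v\in\Dom T$ and $a\in\Dom A$ — so that $a\ot\ve(v)$ belongs to the domain of the right-hand composite. Linearity then yields $\Graph\big(\nabla(A\otul\Gamma(T)_|)\big)\subset\Graph\big((T\otul A\otul\Gamma(T)_|)\nabla\big)$, which is the asserted inclusion.

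The inclusion is generally strict rather than an equality, because the right-hand operator is built from the closed gradient $\nabla$, whose domain far exceeds $\init\otul\Exps(\Kil)$: it can be applied to any $\psi\in\Dom\nabla$ for which $\nabla\psi$ happens to fall into $\Dom T\otul\Dom A\otul\Exps(\Dom T)$, and such $\psi$ need not lie in $\Dom A\otul\Exps(\Dom T)$. I do not expect a serious obstacle here. The only point requiring genuine care is the bookkeeping of tensor-factor orders under ampliation — specifically, confirming that the $\Kil$-slot produced by $\nabla$ is placed to the left of the $\init$-slot, matching the position of the $T$-factor on the right-hand side — together with the (routine) domain verifications that make the evaluation on the core legitimate.
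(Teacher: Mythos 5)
Your proof is correct and takes essentially the same route as the paper's: both arguments evaluate the two composites on the simple tensors $a \ot \ve(v)$ ($a \in \Dom A$, $v \in \Dom T$) spanning the domain of the left-hand side, check that $\nabla(a \ot \ve(v)) = v \ot a \ot \ve(v)$ lies in $\Dom\big(T \otul A \otul \Gamma(T)_|\big)$, find the common value $Tv \ot Aa \ot \ve(Tv)$, and extend by linearity. Your extra bookkeeping (that the left-hand composite's domain is exactly $\Dom A \otul \Exps(\Dom T)$, and that the inclusion is generally strict) is accurate but goes slightly beyond what the paper records.
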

 \begin{proof}
 For $v \in \Dom A$ and $g \in \Dom T$,
 \begin{align*}
 &v \ve(g) \in \Dom \nabla,
 Av \ot \ve(Tg) \in \Dom \nabla,
 \\
 &\nabla v \ve(g) =
 g \ot v \ot \ve(g) \in
 \Dom \big( T \otul A \otul \Gamma(T)_| \big),
 \ \text{ and}
 \\
 &\big( T \ot A \ot \Gamma(T) \big) \nabla v \ve(g) =
 Tg \ot Av \ot \ve(Tg) =
 \nabla \big( Av \ot \ve(Tg) \big).
 \end{align*}
 The result follows.
 \end{proof}

With these we are able to establish
 a key commutation relation between
 the operations of second quantisation and It\^o integration.

\begin{thm}
 \label{Ito second quantisation commutation}
 Let $X = A \ot \Gamma(T)$ where
 $A$ and $T$ are closed densely defined operators on
 $\init$ and $\Kil$ respectively,
 with $T$ affiliated to $L^\infty(\Rplus) \otol B(\noise)$.
 Then
 \[
  X\, \Ito = \Ito \circ (T \ot_\Omega X)
 \]
 and, for any core
 $\mathcal{C}$ for $X$,
 $D (\mathcal{C})$ is a core for $X\Ito$.
\end{thm}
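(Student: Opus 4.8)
The plan is to reduce everything to the gradient commutation relation of Lemma~\ref{gradient second quantisation commutation} and its counterpart for the adjoint operators, exploiting that $\Ito = \Skorohod V_\Omega = \nabla^* V_\Omega$ is essentially $\nabla^*$ restricted to the $\Omega$-adapted subspace. Writing $X = A\ot\Gamma(T)$ and identifying $T\ot A\ot\Gamma(T)$ with $T\ot X$, Lemma~\ref{gradient second quantisation commutation} reads $\nabla X\subset(T\ot X)\nabla$. Since $T$ affiliated to $L^\infty(\Rplus)\otol B(\noise)$ forces $T^*$ to be affiliated too (the condition $Tp_t\supset p_t T$ of Lemma~\ref{invariance = affiliation} passes to adjoints as $T^*p_t\subset p_t T^*$), and since $\Gamma(T)^* = \Gamma(T^*)$ with $T^*$ preserving each $\Kil_t$, the same lemma applied to the pair $A^*,T^*$ gives $\nabla X^*\subset(T^*\ot X^*)\nabla$, where $X^* = A^*\ot\Gamma(T^*)$.

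Next I convert the first relation into a statement about $\Ito$. For $y$ in the adapted subspace $L^2_\Omega(\Rplus;\noise\ot\initFocknoise)$ one has $\Ito y=\nabla^* y$, this subspace lying in $\Dom\Skorohod$ because $\Ito$ is everywhere defined there. By Lemma~\ref{lemma: tensor Omega}, $T\ot X$ preserves $L^2_\Omega$, so for adapted $y\in\Dom(T\ot X)$ one gets $V_\Omega(T\ot_\Omega X)y=\POmega(T\ot X)y=(T\ot X)y$, whence $\Ito(T\ot_\Omega X)y=\nabla^*(T\ot X)y$. The first identity is therefore equivalent to $X\nabla^* y=\nabla^*(T\ot X)y$, which is the adjoint form of the gradient commutation. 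Pairing with test vectors $\psi = v\ot\ve(g)$ ($v\in\Dom A^*$, $g\in\Dom T^*$), which form a core for $X^*$, I compute
\[
\ip{X^*\psi}{\Ito y} = \ip{\nabla X^*\psi}{y} = \ip{(T^*\ot X^*)\nabla\psi}{y} = \ip{\nabla\psi}{(T\ot X)y} = \ip{\psi}{\nabla^*(T\ot X)y},
\]
using $\nabla X^*\psi=(T^*\ot X^*)\nabla\psi$, the inclusion $T^*\ot X^*\subset(T\ot X)^*$, and $(T\ot X)y\in\Dom\nabla^*$. As the $\psi$ exhaust a core for the adjoint of the closed operator $X$, this yields $\Ito y\in\Dom X$ and $X\Ito y=\Ito(T\ot_\Omega X)y$, i.e.\ the inclusion $\Ito(T\ot_\Omega X)\subseteq X\Ito$.

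To upgrade this to equality and settle the core assertion I use duality. Applying the inclusion just proved to $A^*,T^*$ gives $\Ito(T^*\ot_\Omega X^*)\subseteq X^*\Ito$; taking Hilbert-space adjoints (legitimate since $\Ito$ is bounded), and using $\Ito^* = D$, $(X^*\Ito)^*=DX$, together with the compression-adjoint identity $(T^*\ot_\Omega X^*)^* = T\ot_\Omega X$ — valid because both $T\ot X$ and $T^*\ot X^*$ preserve $L^2_\Omega$ — I obtain $DX\subseteq(T\ot_\Omega X)D$. Now if $\Ito y\in\Dom X$ then, since $D$ is a surjective partial isometry with $DD^*=I$ on $L^2_\Omega$, $y=D\Ito y\in\Dom(T\ot_\Omega X)$; this gives $\Dom(X\Ito)\subseteq\Dom(T\ot_\Omega X)$, and with the reverse inclusion and identity from the previous step, $X\Ito=\Ito(T\ot_\Omega X)$. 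For the core statement, note that since $\Gamma(T)$ preserves the chaos grading, $X$ commutes with the projection $Q=\Ito D=D^*D$ onto $\initFock_{\noise, 0}^\perp$; hence for $u$ in a core $\mathcal C$ for $X$, $Du\in\Dom(X\Ito)$ with $\Ito Du=Qu$ and $X\Ito Du=QXu$. Given $y\in\Dom(X\Ito)$, put $u=\Ito y$ (so $Du=y$) and pick $u_n\in\mathcal C$ with $u_n\to u$, $Xu_n\to Xu$ in graph norm; then $Du_n\to y$ and $X\Ito Du_n=QXu_n\to QXu=Xu=X\Ito y$, so $D(\mathcal C)$ is a core for $X\Ito$.

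The main obstacle is the passage in the second step from the gradient commutation to its adjoint form: verifying that the simple tensors $v\ot\ve(g)$ genuinely form a core for $X^*=A^*\ot\Gamma(T^*)$, controlling the tensor-product domains so that $T^*\ot X^*$ acts on $\nabla\psi$ as a restriction of $(T\ot X)^*$, and confirming $(T\ot X)y\in\Dom\Skorohod$. The subsidiary technical point is the compression-adjoint identity $(T\ot_\Omega X)^* = T^*\ot_\Omega X^*$ underpinning the duality in the third step; this is where the affiliation hypothesis on $T$, and hence on $T^*$, is used in full, through Lemmas~\ref{invariance = affiliation} and~\ref{lemma: tensor Omega}.
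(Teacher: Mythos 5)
Your opening move (the weak pairing against vectors $v\ot\ve(g)$, $v\in\Dom A^*$, $g\in\Dom T^*$, which form a core for $X^*=A^*\ot\Gamma(T^*)$) is sound and is in substance the paper's own proof of the inclusion $\Ito\circ(T\ot_\Omega X)\subseteq X\,\Ito$; likewise your graph-norm argument for the core statement, via $QX\subset XQ$ for $Q=D^*D$, is a correct direct variant of the paper's appeal to Lemma~\ref{Lemma 1}(c). The genuine gap is in your duality step, at the compression-adjoint identity $(T^*\ot_\Omega X^*)^*=T\ot_\Omega X$. You justify it by saying that both $T\ot X$ and $T^*\ot X^*$ preserve $L^2_\Omega(\Rplus;\noise\ot\initFocknoise)$. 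But invariance of a closed subspace under a closed operator $S$ and under $S^*$ gives only $S\POmega=\POmega S\POmega$ and $S^*\POmega=\POmega S^*\POmega$ (Lemma~\ref{lemma: tensor Omega}), and this does \emph{not} imply that the adjoint of the compression $V_\Omega^*SV_\Omega$ equals the compression of $S^*$; for that, Lemma~\ref{Lemma 1}(c) requires the genuinely stronger commutation $S\POmega\supset\POmega S$, i.e.\ that $\POmega$ map $\Dom S$ into $\Dom S$. This can fail under exactly your hypotheses: take $S=d/dx$ with domain $H^1(\Real)$ on $L^2(\Real)$ and the subspace $M=L^2(0,\infty)$. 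Both $S$ and $S^*=-d/dx$ map $M\cap H^1(\Real)$ into $M$ (differentiation preserves supports), yet the compression of $S$ to $M$ is $d/dx$ on $\{f\in H^1(0,\infty):f(0)=0\}$, whose adjoint is $-d/dx$ on \emph{all} of $H^1(0,\infty)$ --- strictly larger than the compression of $S^*$; and indeed truncation by the projection destroys membership in $H^1(\Real)$, which is precisely the phenomenon your justification overlooks. Without the identity, your adjoint computation yields only $DX\subseteq(T^*\ot_\Omega X^*)^*D$, and since in general one has merely $T\ot_\Omega X\subseteq(T^*\ot_\Omega X^*)^*$, you cannot conclude $y=D\Ito y\in\Dom(T\ot_\Omega X)$, which is the whole point of the step. (A small separate slip: $Tp_t\supset p_tT$ passes to adjoints as $T^*p_t\supset p_tT^*$, not $\subset$; the affiliation of $T^*$ is of course still true.)

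The missing ingredient is precisely step (e) of the paper's proof: a weak computation on simple tensors $f^1\ot u\ot\ve(f^2)$ and $g^1\ot v\ot\ve(g^2)$, using the pointwise adjointability of affiliated operators (Lemma~\ref{invariance = affiliation}(iii)) together with $Tp_tg=p_tTg$ for $g\in\Dom T$, gives $\ip{(T\ot X)^*\zeta}{\POmega\eta}=\ip{\zeta}{\POmega(T\ot X)\eta}$ and hence $(T\ot X)\POmega\supset\POmega(T\ot X)$. This is where the affiliation hypothesis earns its keep beyond mere invariance of the adapted subspace: it allows $T$ to be commuted past the pointwise cut-offs $p_t$ built into $\POmega$. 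Once this commutation is in hand, Lemma~\ref{Lemma 1}(c) delivers $(T\ot_\Omega X)^*=T^*\ot_\Omega X^*$, and your duality argument then goes through, giving a legitimate alternative to the paper's route to equality (the paper instead proves directly that $\Ito\circ(T\ot_\Omega X)$ is closed and that the core $D\big(\Dom A\otul\Exps(\Dom T)\big)$ of $X\Ito$ lies in $\Dom(T\ot_\Omega X)$, using the same step (e)). So the architecture of your proof is salvageable, but as written the decisive identity is asserted on grounds that are insufficient --- and in general false.
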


\begin{proof}
 The strategy of proof is as follows.
 We prove successively:
\begin{alist}
\item
 For all $t\geq 0$,
 $X \big( \initFock_{\noise, t} \cap \Dom X \big)
 \subset \initFock_{\noise, t}$.
 \item
   $X\, \Ito \supset \Ito \circ (T \ot_\Omega X)$.
 \item
 The operators $X \Ito$ and $\Ito \circ (T \ot_\Omega X)$
 are both closed.
\item
If $\mathcal{C}$ is a core for $X$ then $D (\mathcal{C})$ is a core
for $X\Ito$.
\item
 Setting $\mathcal{D} := \Dom T \otul \Dom A \otul \Exps(\Dom T)$,
 we have
 \[
 P^\Omega (\mathcal{D}) \subset \Dom T \ot X.
 \]
\end{alist}
 Then, setting $\mathcal{C} = \Dom A \otul \Exps(\Dom T)$,
 we have
 \[
 V_\Omega D(\mathcal{C}) \subset
 P^\Omega \mathcal{D} \subset
 \Dom T \ot X.
 \]
  Thus, by (d),
  $D (\mathcal{C})$ is a core for $X\Ito$
  contained in $\Dom (T \ot_\Omega X)$,
  which equals $\Dom \big( \Ito \circ (T \ot_\Omega X) \big)$.
 Since $\Ito \circ (T \ot_\Omega X)$ is closed,
  it follows that
  the inclusion in (b) is an equality
    and the proof will then be complete.

 (a)
 Let $t \geq 0$.
 First note that $T p_t =p_t T p_t$.
 To see this
 use Lemma~\ref{invariance = affiliation} and
 observe that, for  $f \in \Dom T p_t$,
 \[
 p_t f \in \Dom T = \Dom p_t T \subset \Dom T p_t
 \ \text{ and } \
 T p_t f = T p_t p_t f = p_t T p_t f.
\]
 Now let
 $\zeta \in \Fock_{\noise, t} \cap \Dom \Gamma(T)$.
 By Proposition~\ref{second quantisation}, we have
 \begin{align*}
 \Gamma(T) \zeta = \Gamma(T) \Gamma(p_t) \zeta
 &= \Gamma(T p_t) \zeta
 \in \Ran \Gamma(p_t T p_t) \subset \Ran \Gamma(p_t) = \Fock_t.
 \end{align*}
 Thus
 $\Gamma(T) \big( \Fock_{\noise, t} \cap \Dom \Gamma(T) \big)
 \subset \Fock_{\noise, t}$,
 and Corollary~\ref{tensor invariance} implies that
 $X \big( \initFock_{\noise, t} \cap \Dom X \big)
 \subset \initFock_{\noise, t}$, as required.

 (b)
 By (a),
 Lemma~\ref{lemma: tensor Omega} applies,
 thus
 \begin{equation}
 \label{T X V}
 (T \ot X) V_\Omega = P^\Omega (T \ot X) V_\Omega
 \end{equation}
 and we may form
 the operator $T \ot_\Omega X$.
 Let
 \[
 z \in \Dom T \ot_\Omega X
 \ \text{ and } \
 \zeta \in \Dom A^* \otul \Exps(\Dom T^*).
 \]
 Then,
 by Lemma~\ref{gradient second quantisation commutation}
 and Proposition~\ref{second quantisation},
 \begin{align*}
 \ip{\zeta}{\Ito\big( (T \ot_\Omega X) z \big)}
 &=
 \ip{\nabla \zeta}{ V_\Omega V_\Omega^* (T \ot X) V_\Omega z}
 \\
 &=
 \ip{\nabla \zeta}{\big( T \ot A \ot \Gamma(T) \big) V_\Omega z}
 \\
 &=
  \ip{\nabla \big( A^* \ot \Gamma(T)^* \big) \zeta}{ V_\Omega z}
 =
 \ip{\big( A^* \ot \Gamma(T)^* \big) \zeta}{ \Ito z}.
 \end{align*}
 Since $\Dom A^* \otul \Exps(\Dom T^*)$
 is a core for $A^* \ot \Gamma(T)^* = X^*$,
 this implies that $\Ito z \in \Dom X$ and
 $X\, \Ito z =  \Ito \big( (T \ot_\Omega X) z \big)$.
 This proves (b).

 (c)
 Being a closed operator composed with a bounded operator,
 $X \Ito$ is closed (Lemma~\ref{Lemma 1}).
 To see that $R:= \Ito \circ (T \ot_\Omega X)$ is closed too,
 let $(z_n)$ be a sequence in $\Dom R = \Dom (T\ot X)V_\Omega$
 such that
 $
 z_n \to z$ and $R z_n \to w$.
 Then
  $V_\Omega z_n \to V_\Omega z$
  and, by~\eqref{T X V},
 \begin{align*}
 (T \ot X) V_\Omega z_n
 &=
 P^\Omega (T \ot X) V_\Omega z_n
 \\
 &=
 V_\Omega D\Ito(T \ot_\Omega X) z_n
 =
 V_\Omega D R z_n \to V_\Omega D w.
 \end{align*}
 Therefore, since $T\ot X$ is closed,
 $V_\Omega z \in \Dom T \ot X$ and
 $(T\ot X) V_\Omega z = V_\Omega D w$.
 Thus,
 since $w \in \Ran \Ito$,
 $z \in \Dom (T \ot X) V_\Omega = \Dom R$ and
 \[
 R z =
  \Ito V_\Omega^* V_\Omega D w =
 \Ito D w = w.
\]
 Thus $R$ is closed too.

 (d)
 This follows from Part (c) of Lemma~\ref{Lemma 1}
 since $X$ is closed, $\Ito$ is isometric
 $\Ito D = \Ito \Ito^* = I_\init \ot \Gamma(0)$,
 and
 the evident inclusion
 \[
 \big( I_\init \otul \Gamma(0) \big) \, A^* \otul \Gamma(T^*) \subset
 A^* \otul \Gamma(T^*) \, \big( I_\init \otul \Gamma(0) \big)
 \]
 implies that $X \Ito \Ito^* \supset \Ito \Ito^* X$,
 by the adjoint-product-inclusion relation and
 Proposition~\ref{second quantisation}.

 (e)
 Let
 $\zeta = f^1 \ot u \ot \ve(f^2)$ and
 $\eta = g^1 \ot v \ot \ve(g^2)$,
 where
 $f^1, f^2 \in \Dom T^*$, $u\in \Dom A^*$,
 $g^1, g^2 \in \Dom T$ and $v\in \Dom A$.
 Then, by Lemma~\ref{invariance = affiliation},
 \begin{align*}
 &\ip{(T\ot X)^* \zeta}{P^\Omega \eta}
 \\
 &\qquad \qquad
 =
 \int dt \ip{(T^*f^1)(t)\ot A^*u \ot \ve(T^*f^2)}{g^1(t)\ot v \ot \ve(p_t g^2)}
 \\
 &\qquad \qquad
 =
 \int dt \ip{f^1(t)\ot u \ot \ve(f^2)}{(Tg^1)(t)\ot Av \ot \ve(p_t T g^2)}
 =
\ip{\zeta}{P^\Omega (T \ot X) \eta}.
 \end{align*}
 Thus $(T \ot X) P^\Omega \supset P^\Omega (T \ot X)$,
 in particular $P^\Omega (\mathcal{D}) \subset \Dom T \ot X$.
\end{proof}

 \begin{rems}
 For comparison,
 note that if $X$ is bounded
 (equivalently, if $A$ is bounded and $T$ is a contraction) then
 \[
 \ol{X \Skorohod} = \Skorohod \circ (T \ot X),
 \]
 but $X \Skorohod$ is typically not closed (e.g.\ $T=0$).

 We shall use this result with $A$ and $T$ being conjugate-linear
 operators.
 \end{rems}

 \begin{cor}
 \label{DP =MD}
  For all $t\geq 0$,
  \begin{equation}
  \label{M Omega t}
 D P_t = M_t^\Omega D
 \ \text{ where } \
 M_t^\Omega :=
 p_t \ot_\Omega I
 \ \text{ and } \
 I = I_{\noise \ot \initFocknoise}.
  \end{equation}
   \end{cor}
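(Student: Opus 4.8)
The plan is to apply Theorem~\ref{Ito second quantisation commutation} to the bounded second quantised operator $P_t$ and then transpose the resulting commutation relation through the adjoint, the map $D$ being nothing but $\Ito^*$. First I would note that, by~\eqref{pt and Pt}, $P_t = I_\init \ot \Gamma(p_t)$ has exactly the form $A \ot \Gamma(T)$ required by the theorem, with $A = I_\init$ and $T = p_t$. Since $p_t = M_{1_{[0,t[}}$ is a bounded self-adjoint projection lying in $L^\infty(\Rplus) \ot I_\noise$, it is affiliated to $L^\infty(\Rplus) \otol B(\noise)$ (immediately, or via Lemma~\ref{invariance = affiliation}), so the theorem applies and delivers the identity of bounded operators
\[
 P_t \Ito = \Ito \circ (p_t \ot_\Omega P_t).
\]
The core assertion in the theorem plays no role here, as $P_t$ is bounded.

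The key step, and the only one carrying real content, is to identify $p_t \ot_\Omega P_t$ with $M_t^\Omega = p_t \ot_\Omega I$. By the definition~\eqref{tensor Omega} of $\ot_\Omega$ this reduces to showing $(p_t \ot P_t) V_\Omega = (p_t \ot I) V_\Omega$ on $L^2_\Omega(\Rplus; \noise \ot \initFocknoise)$. Here I would invoke the second description of the $\Omega$-adapted subspace in~\eqref{L2Omega characterisation}: for $y$ in that subspace one has $(p_t \ot I) V_\Omega y \in \Kil \ot \initFock_{\noise, t} = \Ran(I \ot P_t)$. As $p_t \ot I$ and $I \ot P_t$ are commuting projections and $I \ot P_t$ restricts to the identity on $\Kil \ot \initFock_{\noise, t}$, it follows that $(p_t \ot P_t) V_\Omega y = (I \ot P_t)(p_t \ot I) V_\Omega y = (p_t \ot I) V_\Omega y$. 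Compressing by $V_\Omega^*$ then gives $p_t \ot_\Omega P_t = M_t^\Omega$, whence $P_t \Ito = \Ito M_t^\Omega$. Conceptually, the Fock-space truncation $\Gamma(p_t)$ supplied by the theorem becomes redundant once the time coordinate has been cut at $t$, precisely because adaptedness forces the fibres of $(p_t \ot I) V_\Omega y$ to lie in $\initFock_{\noise, t}$; this is the crux of the argument and the step I would expect to be the main obstacle.

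Finally I would pass to adjoints. The operator $M_t^\Omega = V_\Omega^*(p_t \ot I) V_\Omega$ is self-adjoint, being the compression of the projection $p_t \ot I$, and $P_t$ is self-adjoint; since $\Ito = D^*$, taking adjoints of the bounded-operator identity $P_t \Ito = \Ito M_t^\Omega$ yields $D P_t = M_t^\Omega D$, as required.
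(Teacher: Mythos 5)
Your proof is correct and follows essentially the same route as the paper: apply Theorem~\ref{Ito second quantisation commutation} with $A = I_\init$, $T = p_t$, use the identity $(p_t \ot P_t)V_\Omega = (p_t \ot I)V_\Omega$ to replace $p_t \ot_\Omega P_t$ by $M_t^\Omega$, and take adjoints. The only difference is that you spell out the adaptedness argument behind that identity via~\eqref{L2Omega characterisation}, which the paper states without proof; your justification is sound.
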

\begin{proof}
 Let $t\geq 0$.
 In view of the identity
 $(p_t \ot P_t) V_\Omega = (p_t \ot I) V_\Omega$,
 the theorem implies that
 $P_t \Ito =
 \Ito\big( p_t \ot_\Omega P_t \big) =
 \Ito\big( p_t \ot_\Omega I \big)
 $,
 and~\eqref{M Omega t} follows on taking adjoints.
\end{proof}

\section{CCR algebras and quasifree states}
 \label{section: CCR algebras and qf states}

 For any nondegenerate symplectic space $(V,\sigma)$ there is
 an associated simple $C^*$-algebra, denoted $CCR(V,\sigma)$; it
 is generated by elements $\{ w_v: v \in V \}$ satisfying
 the canonical commutation relations in Weyl form:
 \[
 w_u w_v = e^{-i\sigma(u,v)} w_{u+v}
 \text{ and }
 w_u^* = w_{-u}
 \qquad
 (u,v\in V).
 \]
 Every *-algebra morphism
 from $CCR_0(V,\sigma) := \Lin\{ w_v: v \in V \}$ to a
 $C^*$-algebra $\Al$, extends uniquely to a $C^*$-morphism from
 $CCR(V,\sigma)$ to $\Al$,
 and every symplectic map $R$ from $V$ into another nondegenerate
 symplectic space $V'$ induces a $C^*$-monomorphism
 $\phi_R: CCR(V,\sigma) \to CCR(V',\sigma')$ satisfying
 $\phi_R (w_v) = w_{Rv}$ ($v\in V$)
 (\cite{slawny},~\cite{Man};
 see  Theorem 5.2.8 of~\cite{BrR},
 and Chapter 2 of~\cite{petz}).
 When $(V',\sigma') = (V,\sigma)$
 and $R$ is a symplectic automorphism,
 $\phi_R$ is known as a Bogoliubov transformation.
 Typically $V$ is a real subspace of a complex Hilbert space
 and $\sigma = \im \ip{\cdot}{\cdot}$
 (in this case we write $CCR(V)$);
 when $V$ is a complex subspace,
 the \emph{guage transformations} of $CCR(V)$ are the Bogoliubov transformations
 $\phi_z$ induced by the symplectic automorphisms
 $v \mapsto z v$ ($z \in \mathbb{T}$).
 The \emph{characteristic function} of a state $\varphi$ on
 $CCR(V,\sigma)$ is the complex-valued function
 $\wh{\varphi} := \varphi \circ w$ on $V$.
 Given any nonnegative quadratic form $\mathfrak{a}$
 on $V$ satisfying
 \begin{equation*}
 \sigma(u,v)^2 \leq
 \mathfrak{a}[u] \mathfrak{a}[v]
 \qquad
 (u,v\in V),
 \end{equation*}
 there is a unique state $\varphi$ on $CCR(V,\sigma)$
 whose characteristic function is given
 by
\begin{equation}
 \label{qf characteristic function}
 \wh{\varphi}: v \mapsto e^{-\frac{1}{2}\mathfrak{a}[v]}
 \end{equation}
 (see \cite{petz}, Theorem 3.4).
 Such states are called (mean zero) \emph{quasifree} states.
 When $V$ is a complex subspace of a Hilbert space,
 a state $\varphi$ on $CCR(V)$ is called guage-invariant if
 it is invariant under the group of guage transformations.
 Thus the above quasifree state is guage invariant if
 its covariance satisfies
 $\mathfrak{a}[ z v] = \mathfrak{a}[v]$ ($v\in V$, $z\in \mathbb{T}$).
 Quasifree states are obviously \emph{regular}, that is
 $t \in \Real \mapsto \wh{\varphi}(t v) \in \Comp$ is continuous
 for all $v\in V$. As a consequence their GNS
 representations yield field operators $R_\varphi(v)$
 as Stone-generators of the unitary group
 $\big(\pi_\varphi(w_{tv})\big)_{t\in \Real}$ and thus,
 when $(V,\sigma)$ is a
 \emph{complex}
 subspace of
 $(\Hil, \im \ip{\cdot}{\cdot})$
 for a complex Hilbert space $\Hil$, also
  annihilation and creation operators
 $a_\varphi(v):=
 \frac{1}{2}\big( R_\varphi(v) + i R_\varphi(i v) \big)$,
 respectively
 $a^*_\varphi(v):=
 \frac{1}{2}\big( R_\varphi(v) - i R_\varphi(i v) \big)$
 ($v\in V$).
 The latter are fully formed closed mutually adjoint operators
 satisfying the canonical commutation relations in the form
 \[
 \norm{a^*_\varphi(v)\zeta}^2  - \norm{a_\varphi(v)\zeta}^2 =
 \norm{v}^2 \norm{\zeta}^2
 \qquad
 \big( \zeta \in \Dom a^*_\varphi(v) = \Dom a_\varphi(v) \big)
 \]
 (\cite{BrR}, Lemma 5.1.12). \emph{Warning}: We use the
 probabilists'  normalisation rather than
 that of the mathematical physicists.
 The case where
 $(V, \sigma) =
 (\Hil, \im \ip{\cdot}{\cdot})$
 and
 $\mathfrak{a} = \norm{\cdot}^2$,
 for a complex Hilbert space
 $\Hil$,
 is the Fock state.
 Its GNS representation is given by the Fock-Weyl operators
 defined in the introduction and Fock vacuum vector.
 For any nondegenerate symplectic space $(V,\sigma)$ and
 symplectic map $R: V \to \Hil$
 satisfying
 $|\sigma(u,v)| \leq \norm{R u }\norm{R v}$
 ($u, v \in V$),
 there is a
 representation $\pi_R$ of
 $CCR(V,\sigma)$ on $\Gamma(\Hil)$
 satisfying
 $\pi_R(w_v) = W_0 (R v)$
 and a quasifree state with
 characteristic function~\eqref{qf characteristic function}
 in which $\mathfrak{a}[v] = \norm{R v }^2$ ($v\in V$).
 There is an extensive literature on quasifree states;
 the notes~\cite{petz} are useful,
 and~\cite{BrR} provides their context in quantum statistical mechanics.

 \begin{rem}
 The analogue of quasifree states in free probability is
 investigated in~\cite{shl}.
 \end{rem}

  A pair $(H_1, H_2)$, consisting of closed subspaces of a real Hilbert
 space, is said to be \emph{in generic position} if
 $H_1 \cap H_2$,
 $H_1^{\perp} \cap H_2$,
 $H_1 \cap H_2^{\perp}$ and
 $H_1^{\perp} \cap H_2^{\perp}$ are all trivial (\cite{halmos}).
 Araki's Duality Theorem, which we quote next,
 is central to the understanding of von Neumann algebras
 associated with quasifree states of CCR algebras.

 \begin{thm}[{[$\text{Ar}_{1,2}$]}]
 \label{duality theorem}
 Let $H_1$ and $H_2$ be closed real subspaces
 of a complex Hilbert space $\Hil$.
 Suppose  that
 $(H_1, H_2)$ is in generic position and let
 $\pi$ be the Fock representation of $CCR(\Hil)$.
 For $i=1,2$, let $\pi_i = \pi \circ \phi_i$ where $\phi_i$
 is the natural $C^*$-monomorphism $CCR(H_i) \to CCR(\Hil)$, then
 $\pi_i$ is a faithful, irreducible representation which generates a
 Type III factor $\Noise_i$ for which the Fock vacuum
 $\Omega_{\Hil}$ is cyclic and separating
 and $\Noise_2 = (\Noise_1)'$.
 \end{thm}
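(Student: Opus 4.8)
The plan is to reduce everything to the one-particle (pre-quantization) level via functoriality of second quantization, to extract the duality from Tomita--Takesaki theory, and to read off factoriality and the type from the one-particle modular spectrum. Throughout write $\Noise(H) := \{W_0(h) : h \in H\}''$ for a closed real subspace $H \subset \Hil$, and $H^{\perp_\sigma} := \{k \in \Hil : \im\ip{h}{k} = 0 \text{ for all } h \in H\}$ for its symplectic complement; since $\im\ip{h}{k}$ is the real inner product of $ih$ with $k$, one has $H^{\perp_\sigma} = (iH)^\perp$ and hence $iH^{\perp_\sigma} = H^\perp$ (real orthocomplements). Recall the standard dictionary: $\Omega_\Hil$ is cyclic for $\Noise(H)$ exactly when $H + iH$ is dense, and separating exactly when $H \cap iH = \{0\}$; a subspace with both properties is called \emph{standard}.

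First I would unpack the generic-position hypothesis, taking $H_2 = H_1^{\perp_\sigma}$---the identification that renders the conclusion $\Noise_2 = \Noise_1'$ possible and which the duality below confirms. Using $iH_1^{\perp_\sigma} = H_1^\perp$, the triviality of $H_1 \cap H_2^\perp = H_1 \cap iH_1$ says $\Omega_\Hil$ separates $\Noise_1$, and the triviality of $H_1^\perp \cap H_2 = iH_1^{\perp_\sigma}\cap H_1^{\perp_\sigma}$ says $H_1 + iH_1$ is dense, so $\Omega_\Hil$ is cyclic for $\Noise_1$; by symmetry the same holds for $\Noise_2$. Thus each $H_i$ is standard and $\Omega_\Hil$ is cyclic and separating for both algebras. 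Faithfulness of $\pi_i$ is automatic, since $CCR(H_i,\sigma)$ is simple, so any nonzero representation is injective.

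The substance is the duality $\Noise_2 = \Noise_1'$, which I would obtain from Tomita--Takesaki. Let $s_H$ be the closed conjugate-linear involution on $\Hil$ extending $h_1 + ih_2 \mapsto h_1 - ih_2$ ($h_1, h_2 \in H$), with polar decomposition $s_H = j_H \delta_H^{1/2}$. Verifying on exponential vectors the functoriality relation $\Gamma(j_H) W_0(h) \Gamma(j_H)^* = W_0(j_H h)$ and the identification of the modular data of $(\Noise(H),\Omega_\Hil)$ as $\Delta = \Gamma(\delta_H)$ and $J = \Gamma(j_H)$ (so that the paper's operator $S_{\Omega_\Hil}$ is exactly $\Gamma(s_H)$), and granting the one-particle fact $j_{H_1} H_1 = H_1^{\perp_\sigma}$, Tomita's theorem yields
\[
\Noise_1' = J \Noise_1 J = \Gamma(j_{H_1}) \Noise(H_1) \Gamma(j_{H_1})^* = \Noise(j_{H_1} H_1) = \Noise(H_1^{\perp_\sigma}) = \Noise_2.
\]
The inclusion $\Noise_2 \subseteq \Noise_1'$ is the elementary half: the Weyl relation $W_0(h)W_0(k) = e^{-2i\,\im\ip{h}{k}} W_0(k) W_0(h)$ together with real-linearity of $H_1, H_2$ forces commutation precisely when $\im\ip{h}{k} = 0$. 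The nontrivial content is that the commutant is no larger, and this is exactly the force of $\Noise_1' = J \Noise_1 J$.

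Finally, for factoriality and the type I would pass to the one-particle relative-position operator. Generic position lets me invoke Halmos's theorem \cite{halmos} to put the pair of real projections onto $H_1$ and $iH_1$ into canonical $2\times 2$ form, governed by an angle operator $\Theta$ with both $\cos\Theta$ and $\sin\Theta$ injective (no extreme eigenvalues), diagonalizing $\Hil$ as a direct integral of elementary two-mode blocks. Triviality of $H_1 \cap H_2$ and $H_1^\perp \cap H_2^\perp$ gives $\Noise_1 \cap \Noise_2 = \Noise_1 \cap \Noise_1' = \Comp I$, so $\Noise_1$ is a factor. The type is read from the spectrum of $\delta_{H_1}$, which on each block is determined by $\Theta$: injectivity of $\cos\Theta$ and $\sin\Theta$ excludes the tracial directions ($\delta_{H_1} = I$, type II) and the decoupled directions (type I), leaving an ITPFI factor of Type III, with the $\mathrm{III}_\lambda$ refinement supplied by the Araki--Woods/Connes spectral invariant of $\delta_{H_1}$. \textbf{The main obstacle} is twofold: the reverse inclusion of the duality, which is precisely the depth of Tomita--Takesaki channeled through $S_{\Omega_\Hil} = \Gamma(s_{H_1})$ and $j_{H_1} H_1 = H_1^{\perp_\sigma}$; and the Type III determination, where generic position must be used to rule out every semifinite direction in the modular spectrum.
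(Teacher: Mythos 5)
The paper offers no proof of Theorem~\ref{duality theorem}: it is imported wholesale from \cite{araki} and \cite{Araki2}, and the modular-theoretic mechanism behind it is rehearsed in the paper's own Theorem~\ref{E-O theorem}, whose pivotal part (c) is in turn cited to Eckmann--Osterwalder \cite{osterwalder} and Halmos \cite{halmos}. Your sketch follows precisely that Eckmann--Osterwalder route, and several things are done well: you make explicit the hypothesis the statement leaves tacit (one must take $H_2 = H_1^{\sigma\perp}$ --- generic position of an arbitrary pair cannot yield $\Noise_2 = \Noise_1'$, and this is exactly how the theorem is applied via~\eqref{H1 defined}); your translation of the four generic-position conditions into cyclicity and separation is correct; faithfulness via simplicity is fine (noting that $\sigma$ is nondegenerate on $H_i$ precisely because $H_1\cap H_2=\{0\}$); and the elementary inclusion $\Noise_2\subseteq\Noise_1'$ from the Weyl relations is sound. (``Irreducible'' in the statement can only mean primary/factorial, as your reading implicitly assumes.)

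As a proof, however, the proposal is circular at its load-bearing joint, and two further steps are asserted rather than proved. First, the identification $S_{\Omega_\Hil}=\Gamma(s_{H_1})$ cannot be ``verified on exponential vectors'': the span of $\{W_0(h)\Omega_\Hil : h\in H_1\}$ is a core for $S_{\Omega_\Hil}$ (Kaplansky density), so checking there yields only the inclusion $S_{\Omega_\Hil}\subseteq\Gamma(\cdot)$; the reverse inclusion is essentially equivalent to the duality being proved. This is visible in Theorem~\ref{E-O theorem}, where the equality $S_\Omega=\Gamma(s_\Omega)$ (part (h)) is obtained only \emph{after} duality (part (e)), via $F_\Omega\subseteq\Gamma(f_\Omega)$ for the commutant and an adjoint argument, while duality itself rests on part (c) --- the Halmos-based one-particle facts $j_\Omega H_1=H_2$ and $J_\Omega W_0(G)J_\Omega = W_0(j_\Omega G)$ that you explicitly ``grant''. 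The two facts you defer are therefore jointly the theorem, not lemmas feeding it. Second, a sign error: with the paper's convention $W_0(u)\vp(v)=e^{-i\im\ip{u}{v}}\vp(u+v)$ one computes $S_{\Omega_\Hil}\vp(th)=\vp(-th)$ for $h\in H_1$, so the one-particle Tomita operator is $-1$ on $H_1$ and $+1$ on $iH_1$ (as in Theorem~\ref{E-O theorem}(b)); your $s_H$ is its negative, so $S_{\Omega_\Hil}=\Gamma(-s_{H_1})\neq\Gamma(s_{H_1})$. This is harmless for the conclusion --- $j$ flips sign and $j H_1$ is unchanged as a set, $\delta$ is unchanged --- but the identity as written is false. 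Third, factoriality: $\Noise_1\cap\Noise_1'=\Comp I$ does not follow formally from the triviality of $H_1\cap H_2$; one needs the lattice identity $\Noise(H)\cap\Noise(K)=\Noise(H\cap K)$, itself a main theorem of \cite{araki}, or a direct two-subspace argument. Finally, the type determination: a direct integral of two-mode blocks is not an ITPFI decomposition unless the angle operator has pure point spectrum, and excluding every semifinite direction from the modular spectrum of $\delta_{H_1}$ is precisely the content of \cite{Araki2}; a gesture at the Araki--Woods invariant does not substitute for it (nor does the theorem claim any $\mathrm{III}_\lambda$ refinement). In short: the roadmap is the right one and matches the sources the paper relies on, but the steps you grant are where the theorem lives.
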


 \emph{In this section} $\Hil = \Kiltwo$
 where $\Kil$ is the complexification of a real Hilbert
 space.
 Viewing $\Kil$ and $\Kiltwo:= \Kil \op \Kil$ as real vector spaces,
 they carry  the symplectic forms
 $\im \ip{\cdot}{\cdot}_\Kil$ and
 $\im \ip{\cdot}{\cdot}_{\Kiltwo}$ respectively,
 and the real inner products
 $\re \ip{\cdot}{\cdot}_\Kil$ and
 $\re \ip{\cdot}{\cdot}_{\Kiltwo}$.
 The symbol
 ${}^{\sigma \perp}$ denotes symplectic complement
 with respect to the symplectic form
 $\im \ip{\cdot}{\cdot}$,
 and ${}^{\re \perp}$ means orthogonality with respect to
 the real inner product
 $\re \ip{\cdot}{\cdot}$.
 The conjugation on both $\Kil$ and $\Kiltwo$ is denoted by $K$,
 and we employ the conjugate-linear operator
 $K^\pi:= K \circ \pi = \pi \circ K$,
 where $\pi$ is the sum-flip on $\Kiltwo$, and
 the real-linear operator
 \begin{equation}
 \label{iota defined}
 \iota := \begin{bmatrix} I \\ -K \end{bmatrix}:
 \Kil \to \Kiltwo, \quad
 f \mapsto \binom{f}{-\ol{f}}.
\end{equation}
 Let $(\Sigmao, \subspace)$ consist of a real subspace $\subspace$
 of $\Kil$ and an operator $\Sigmao$ 0n $\Kil^{\op 2}$ with domain
  $\Lin_\Comp \iota(\subspace)$, and \emph{assume that the following
  hold}:
  \begin{subequations}
  \label{I assump}
  \begin{equation}
 \label{dense assump}
 \subspace \text{ is dense in } \Kil,
  \end{equation}
\begin{equation}
 \label{closable assump}
 \Sigmao \text{ is closable, and}
  \end{equation}
\begin{equation}
 \label{symplectic assump}
 \Sigmao \circ \iota \text{ is symplectic.}
  \end{equation}
 \end{subequations}
  Set $\Sigma:=\ol{\Sigmao}$.
 Note the following, in which $R:= \Ran \iota$:
 \begin{align}
 &\subspace \cap i \subspace \text{ is dense in } \Kil;
 \nonumber
  \\
 &R \cap i R = \{0\} \text{ and }
 R + i R = \Kiltwo;
  \nonumber
 \\
 &\Dom \Sigmao
 \text{ is dense in } \Kiltwo.
  \nonumber
 \end{align}
 Recalling the Fock-Weyl operator notation
 described in the introduction,
 we define
 \begin{align}
 &\Noise_{(\Sigma,\subspace)} :=
 (\Weyl_{\Sigmao})'' \text{ where }
 \Weyl_{\Sigmao}
 := \Lin \{ W(f): f \in \subspace \}
 \text{ and }
 W := W_0 \circ \Sigmao \circ \iota;
 \nonumber
 \\
 &\Omega :=
 \Omega_{\Kiltwo},\
 \Fock := \Fock_{\noisetwo},\
  \text{ and write } V^{(1)}
  \text{ for the natural isometry }
  \Kiltwo \to \Fock;
  \nonumber
 \\
 &H_1 := \ol{\Sigmao \iota(\subspace)} \text { and }
 H_2 := H_1^{\sigma \perp} = i H_1^{\re \perp} = (i H_1)^{\re \perp}.
 \label{H1 defined}
 \end{align}
  Thus $H_1$ and $H_2$ are closed real subspaces of $\Kiltwo$
 and $V^{(1)} V^{(1) *} = P_{\Fock^{(1)}}$,
 where $\bigoplus_{n\geq 0} \Fock^{(n)}$
 is the eigendecomposition for the number operator on $\Fock$.

 The map $w_f\mapsto W(f)$ defines a representation of $CCR(\subspace)$, and
 the vacuum vector induces the quasifree state on $CCR(\subspace)$
 with characteristic function
 $\widehat{\varphi}(f)=e^{-\frac{1}{2}\norm{\Sigmao \iota(f)}^2}$.

 To the above assumptions on $(\Sigmao, \subspace)$
 we add the following:
\begin{subequations}
 \label{Sigma assumptions: basic}
 \begin{equation}
 \label{range assump}
 \Ran \Sigmao \text{ is dense in } \Kiltwo.
 \end{equation}
 \begin{equation}
 \label{generic assump}
 \text{ the pair } (H_1, H_2) \text{ is in generic position.}
 \end{equation}
 \end{subequations}
 Thus $\Omega$ is cyclic and separating for $\Noise_{(\Sigma,\subspace)}$.

 \begin{thm}
 \label{E-O theorem}
 Let $(\Sigmao, \subspace)$ be as above,
 satisfying~\eqref{I assump} and~\eqref{Sigma assumptions: basic}.
 Set
 $s_\Omega := V^{(1) *} S_\Omega V^{(1)}$ and
 $f_\Omega := V^{(1) *} F_\Omega V^{(1)}$.
 Then the following hold.
 \begin{alist}
 \item
 $S_\Omega P_{\Fock^{(1)}} \supset P_{\Fock^{(1)}} S_\Omega$,
 $V^{(1) *} S_\Omega \subset s_\Omega V^{(1) *}$, and
 $s_\Omega \Sigmao = \Sigmao K^\pi$.
 \item
 $s_\Omega$ is closed and densely defined with core $\Ran \Sigmao$.
 Moreover,
 \[
 \Dom s_\Omega^2 \supset H_1 + i H_1
 \ \text{ and } \
 s_\Omega^2 \eta = \eta
 \qquad
 (\eta \in H_1 + i H_1),
 \]
 with $s_\Omega \zeta = - \zeta$ for $\zeta \in H_1$ and
 $s_\Omega ( \zeta) = \zeta$ for $\zeta \in i H_1$.
\end{alist}
 Let $j_\Omega \delta_\Omega^{1/2}$
 be the polar decomposition of $s_\Omega$.
 \begin{itemize}
 \item[(c)]
 $
 j_\Omega H_1 = H_2$,
 $
 j_\Omega = V^{(1) *} J_\Omega V^{(1)}$,
 $\delta^{1/2}_\Omega = V^{(1)*} \Delta^{1/2}_\Omega V^{(1)}$, and
 \[
 J_\Omega W_0(G) J_\Omega = W_0(j_\Omega G)
 \qquad (G \in H_1).
 \]
\end{itemize}
  Set $\Sigmaoprime := j_\Omega \Sigmao (K \op K)$
  and note that $\Sigmaoprime$ is closable and
 \[
 \Dom \Sigmaoprime = (K \op K) \Dom \Sigmao =\Lin_\Comp \iota( K \subspace).
 \]
 Define $\Noise_{(\Sigma', K\subspace)} := (\Weyl_{\Sigmaoprime})''$ where
 $\Weyl_{\Sigmaoprime} := \Lin \{ W'(g): g \in K \subspace \}$
 and
 $W':= W_0 \circ \Sigmaoprime \circ \iota$.

 \begin{itemize}
 \item[(d)]
 $\Sigmaoprime \circ \iota$
 is a symplectic map from $K \subspace$ to $\Kiltwo$.
 \item[(e)]
 $\Noise_{(\Sigma',K\subspace)} = (\Noise_{(\Sigma,\subspace)})'$.
\item[(f)]
$F_\Omega P_{\Fock^{(1)}} \supset P_{\Fock^{(1)}} F_\Omega$,
 $V^{(1) *} F_\Omega \subset f_\Omega V^{(1) *}$
 and
 $f_\Omega \Sigmaoprime = \Sigmaoprime K^\pi$.
 \item[(g)]
 $f_\Omega$ is closed and densely defined with core $\Ran \Sigmaoprime$.
 \item[(h)]
 $f_\Omega = s_\Omega^*$,
 $\Gamma(s_\Omega) = S_\Omega$
 and
 $\Gamma(f_\Omega) = F_\Omega $.
 \end{itemize}
 \end{thm}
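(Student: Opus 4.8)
The plan is to reduce every part to a \emph{one-particle} computation, the guiding principle being that $S_\Omega$ and $F_\Omega$ are the second quantisations of conjugate-linear operators on $\Kiltwo$ whose structure is governed by the standard real subspace $H_1$ and its symplectic complement $H_2$. Everything is driven by the action of $S_\Omega$ on the total set $\{W_0(G)\Omega = \varpi(G) : G \in \Sigmao\iota(\subspace)\}$.

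For part~(a) I start from the Weyl relation $W_0(G)^* = W_0(-G)$. Since $W_0(\Sigmao\iota(f)) \in \Weyl_{\Sigmao} \subset \Noise_{(\Sigma,\subspace)}=:\Noise$ for $f \in \subspace$, the defining property of the Tomita operator gives $S_\Omega \varpi(G) = W_0(G)^*\Omega = \varpi(-G)$, hence $S_\Omega\varepsilon(G) = \varepsilon(-G)$ for $G \in \Sigmao\iota(\subspace)$. To extract a grading-preserving first-particle restriction I exploit real-analyticity: for real $t$ one has $tG \in \Sigmao\iota(\subspace)$, and comparing the $t$-power series of the $m$-particle components of $S_\Omega\varepsilon(tG)$ with those of $\varepsilon(-tG) = \sum_n (-t)^n (n!)^{-1/2} G^{\otimes n}$ forces $S_\Omega$ to respect the number grading on these vectors and to act on the first-particle level by $G \mapsto -G$. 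Polarising (replacing $G$ by $\sum_j t_j G_j$ with $G_j \in \Sigmao\iota(\subspace)$, $t_j \in \Real$) and invoking density of $\Ran\Sigmao$ yields the invariance $S_\Omega P_{\Fock^{(1)}} \supset P_{\Fock^{(1)}} S_\Omega$ and the intertwining $V^{(1)*}S_\Omega \subset s_\Omega V^{(1)*}$ that defines $s_\Omega$. Finally $s_\Omega\Sigmao\iota(f) = -\Sigmao\iota(f) = \Sigmao(-\iota(f)) = \Sigmao K^\pi\iota(f)$ gives $s_\Omega\Sigmao = \Sigmao K^\pi$.

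Part~(b) is then the standard-subspace computation: $s_\Omega$ is closed (as an isometric compression of the closed $S_\Omega$ along $V^{(1)}$, using the invariance from~(a)), has the dense set $\Ran\Sigmao$ in its domain as a core, acts as $-1$ on $H_1 = \overline{\Sigmao\iota(\subspace)}$ and, by antilinearity, as $+1$ on $iH_1$, so $H_1 + iH_1 \subset \Dom s_\Omega^2$ with $s_\Omega^2 = I$ there; taking the polar decomposition $s_\Omega = j_\Omega\delta_\Omega^{1/2}$ prepares~(c). The structural identity behind~(c) is $S_\Omega = \Gamma(s_\Omega)$: the closed operators $\Gamma(s_\Omega)$ and $S_\Omega$ agree on the total set $\{\varepsilon(G): G\in\Sigmao\iota(\subspace)\}$ since $\Gamma(s_\Omega)\varepsilon(G) = \varepsilon(s_\Omega G) = \varepsilon(-G)$, and this set is shown to be a core for both. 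Because second quantisation carries positive operators to positive operators and antiunitaries to antiunitaries, it sends the polar decomposition of $s_\Omega$ to that of $S_\Omega$, giving $J_\Omega = \Gamma(j_\Omega)$ and $\Delta_\Omega^{1/2} = \Gamma(\delta_\Omega^{1/2})$, hence the asserted $V^{(1)*}$-formulae; the covariance $J_\Omega W_0(G) J_\Omega = W_0(j_\Omega G)$ follows from $J_\Omega = \Gamma(j_\Omega)$ and the transformation of Weyl operators under second-quantised antiunitaries, and $j_\Omega H_1 = H_2$ is the usual identification of the modular conjugation of the standard subspace $H_1$ with its symplectic complement. For~(d) I compute $(K\op K)\iota(g) = \iota(Kg)$, so $\Sigmaoprime\iota(g) = j_\Omega\Sigmao\iota(Kg)$; since $j_\Omega$ is antiunitary and $g \mapsto Kg$ each reverse $\im\langle\cdot,\cdot\rangle$, the two sign changes cancel and $\Sigmaoprime\iota$ is symplectic on $K\subspace$. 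For~(e), Tomita--Takesaki gives $\Noise' = J_\Omega\Noise J_\Omega = (\{W_0(j_\Omega\Sigmao\iota(f)): f\in\subspace\})''$, and reparametrising $f = Kg$ identifies this generating set with $\Weyl_{\Sigmaoprime}$, so $(\Noise_{(\Sigma,\subspace)})' = \Noise_{(\Sigma',K\subspace)}$.

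Parts~(f) and~(g) are exactly~(a) and~(b) applied to the commutant $\Noise_{(\Sigma',K\subspace)}$, whose Tomita $S$-operator is $F_\Omega$, with the data $(\Sigmaoprime, K\subspace)$ and the subspace $H_2$ now playing the roles of $(\Sigmao,\subspace)$ and $H_1$; this yields $f_\Omega\Sigmaoprime = \Sigmaoprime K^\pi$ and core $\Ran\Sigmaoprime$. Part~(h) then follows, since $F_\Omega = S_\Omega^*$ by Tomita theory and $\Gamma(s_\Omega)^* = \Gamma(s_\Omega^*)$, giving $f_\Omega = s_\Omega^*$ together with $\Gamma(s_\Omega) = S_\Omega$ and $\Gamma(f_\Omega) = F_\Omega$. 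The main obstacle, I expect, is the rigorous passage in~(a) from the exponential-vector identity to grading-preservation and the identification $S_\Omega = \Gamma(s_\Omega)$ with full control of domains: a priori $S_\Omega$ is only the closure of an antilinear map on $\Noise\Omega$, its number grading is not manifest, and one must verify that $\{\varepsilon(G): G\in\Sigmao\iota(\subspace)\}$ is genuinely a core before the second-quantised picture can be asserted.
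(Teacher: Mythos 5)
Your parts (a), (b), (d), (e), (f), (g) essentially reproduce the paper's argument: the difference-quotient-plus-closedness computation at the one-particle level, the compression argument giving $s_\Omega$ closed with core $\Ran \Sigmao$, the cancellation of the two sign reversals showing $\Sigmaoprime \circ \iota$ is symplectic, Tomita's theorem for the commutant identification, and the bootstrap applying (a) and (b) to the primed pair $(\Sigmaoprime, K\subspace)$ are all the paper's moves. One small slip there: extending $S_\Omega P_{\Fock^{(1)}} \supset P_{\Fock^{(1)}} S_\Omega$ from Weyl vectors to all of $\Dom S_\Omega$ uses that $\Weyl_{\Sigmao}\Omega$ is a core for $S_\Omega$ together with closedness of $S_\Omega$; ``density of $\Ran \Sigmao$'' is not the relevant ingredient at that point (it enters only in (b)).

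The genuine gap is in your (c), and it propagates to your (e) and (h). You found everything on the equality $S_\Omega = \Gamma(s_\Omega)$, justified by the claim that $\{\ve(G): G \in \Sigmao\iota(\subspace)\}$ ``is shown to be a core for both''. For $S_\Omega$ this is Tomita theory plus Kaplansky density; for $\Gamma(s_\Omega)$ it is precisely the unproven point --- you flag it yourself as ``the main obstacle'' and never discharge it. Agreement of two closed operators on a core of only \emph{one} of them yields only $S_\Omega \subset \Gamma(s_\Omega)$. The appendix result (Proposition~\ref{second quantisation}(ii)) produces $\Exps(\core)$ as a core for $\Gamma(R)$ only from a complex-linear core $\core$; here $\Sigmao\iota(\subspace)$ is merely a \emph{real} subspace, and you cannot pass to its complex span $\Ran\Sigmao$, because for a complex combination $G' \in \Ran\Sigmao \setminus \Sigmao\iota(\subspace)$ the vector $\ve(G')$ is not known to lie in $\Dom S_\Omega$ at this stage. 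A repair along your lines would require an Araki-type totality lemma (exponentials over a real subspace $V$ with $V + iV$ dense are total) applied in the graph Hilbert space of $s_\Omega$ --- machinery of the same depth as what is being proved, which must be supplied, not asserted. The paper sidesteps the issue entirely: it quotes Eckmann--Osterwalder's one-particle duality for (c) (so that (d)--(g) never need $S_\Omega = \Gamma(s_\Omega)$), obtains $f_\Omega = s_\Omega^*$ from (a) via Lemma~\ref{Lemma 1}(c), proves only the one-sided inclusions $S_\Omega \subset \Gamma(s_\Omega)$ and, for the primed pair, $F_\Omega \subset \Gamma(f_\Omega)$, and closes the loop at the very end by taking adjoints: $S_\Omega = F_\Omega^* \supset \Gamma(f_\Omega)^* = \Gamma(f_\Omega^*) = \Gamma(s_\Omega)$. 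In your architecture, by contrast, the formulas $J_\Omega = \Gamma(j_\Omega)$ and $\delta_\Omega^{1/2} = V^{(1)*}\Delta_\Omega^{1/2}V^{(1)}$, the Weyl covariance you feed into (e), and the whole of (h) all rest on the unestablished equality; as written the proof does not go through, though reordering as in the paper --- cite the one-particle theorem for (c) and defer the equality to (h) via the adjoint sandwich --- repairs it.
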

\begin{proof}
 (a)
  For $f\in \subspace$, since
  $S_\Omega \ve(t \Sigmao \iota (f))
 =
 \ve(-t \Sigmao \iota (f))$,
  \begin{align*}
 t^{-1}\big( \ve(t \Sigmao \iota (f)) - \Omega  \big)
 &\to
 V^{(1)} \Sigmao \iota(f) =
 P_{\Fock^{(1)}}
 \ve(t \Sigmao \iota (f)), \text{ and }
 \\
 S_\Omega t^{-1}\big( \ve(t \Sigmao \iota (f)) - \Omega  \big)
 &\to
 - V^{(1)} \Sigmao \iota(f) =
 -  P_{\Fock^{(1)}} \ve(t \Sigmao \iota (f)).
  \end{align*}
 Thus
 $P_{\Fock^{(1)}} W(f)\Omega \in \Dom S_\Omega$ and
$S_\Omega P_{\Fock^{(1)}} W(f)\Omega  = P_{\Fock^{(1)}} S_\Omega
W(f)\Omega$.
 Since $\Weyl_\Sigmao \Omega$ is a core for $S_\Omega$, this implies the
 first inclusion. The second inclusion follows, as does the identity
 \begin{equation}
 \label{s Sigma iota}
 s_\Omega \Sigmao \circ \iota = - \Sigmao \circ \iota.
 \end{equation}
 Since $K^\pi \circ \iota = - \iota$,
 the conjugate-linear operators
 $s_\Omega \Sigmao$ and $\Sigmao K^\pi$ agree on
 $\iota(\subspace)$, and therefore coincide.

 (b)
 Since $S_\Omega$ is closed with core $\Weyl_\Sigmao \Omega$,
 (a) and the adjoint-product-inclusion relation~\eqref{adjoint product}
 imply that
  $s_\Omega$ is closed with core
  $V^{(1) *} \Weyl_\Sigmao \Omega = \Ran \Sigmao$,
  which is dense by assumption.
  Now let $\zeta \in H_1$.
  Then $\zeta = \lim \zeta_n$
  for a sequence $(\zeta_n)$ in $\Sigmao \iota (\subspace)$.
  By~\eqref{s Sigma iota},
  $s_\Omega \zeta_n = - \zeta_n \to - \zeta$.
  Since $s_\Omega$ is closed,
  this implies that
  $\zeta \in \Dom s_\Omega$ and $s_\Omega \zeta = - \zeta$.
  Also, by conjugate linearity, $s_\Omega i \zeta = i \zeta$.
  It follows that $H_1 + i H_1 \subset \Dom s_\Omega^2$ and
  $s_\Omega^2 \eta = \eta$ for $\eta \in H_1 + i H_1$.
  This proves (b).

 (c)
 This is proved in~\cite{osterwalder} using Halmos'
 two subspaces paper (\cite{halmos}); see also Chapter 7 of~\cite{petz}.

 (d)
 $\Sigmaoprime \circ \iota$ is symplectic since,
 for $f,g\in \subspace$,
 \begin{align*}
 \im \ip{\Sigmaoprime \iota(\ol{f})}{\Sigmaoprime \iota(\ol{g})}
 &=
 \im \ip{ j_\Omega \Sigmao \iota(f)}{j_\Omega \Sigmao \iota(g)}
 \\
 &=
 - \im \ip{ \Sigmao \iota(f)}{ \Sigmao \iota(g)}
 =
 - \im \ip{f}{g} =
 \im \ip{\ol{f}}{\ol{g}}.
 \end{align*}
 Since $(K \op K) \circ \iota = \iota \circ K$ and $j_\Omega$ is isometric,
 the density of $\Ran \Sigmaoprime$ follows from (c):
 \[
 \ol{\Ran} \Sigmaoprime = j_\Omega \ol{\Ran} \Sigmao =
 j_\Omega H_1 = H_2.
 \]

 (e)
 By (c),
\begin{align*}
 W'(\ol{f})
 = W_0(\Sigmaoprime \iota(\ol{f}))
 &= W_0(j_\Omega \Sigmao \iota(f))
 \\
 &= J_\Omega W_0(\Sigmao \iota(f)) J_\Omega
 = J_\Omega W(f) J_\Omega
\end{align*}
so, by Tomita's Theorem,
\[
 \Noise_{(\Sigma',K\subspace)} =
 (\Weyl_{\Sigmaoprime})'' =
 (J_\Omega \Weyl_{\Sigmao} J_\Omega)'' =
 J_\Omega (\Weyl_{\Sigmao})'' J_\Omega =
 J_\Omega \Noise_{(\Sigma,\subspace)} J_\Omega =
 (\Noise_{(\Sigma,\subspace)})'.
\]

 (f)\&(g)
 By the assumptions on $(\subspace, \Sigmao, H_1, H_2)$,
 and what has been already proved,
 the pair $(K\subspace, \Sigmaoprime)$ consists of
 a dense real subspace of $\Kil$ and
 a closable operator
 satisfying~\eqref{Sigma assumptions: basic},
 with $(H_2, H_1)$ in place of $(H_1, H_2)$.
 Since, by (e),
 the $S$-operator for $(N_{\Sigma'},\Omega)$
 is $F_\Omega$, (f) and (g) are precisely what
 results from applying (a) and (b) to the pair
 $(K\subspace, \Sigmaoprime)$.

 (h)
 The identity $s_\Omega^* = f_\Omega$
 follows from
 (a) and Part (c) of Lemma~\ref{Lemma 1}.
 For $f\in \subspace$,
 \[
 \Gamma( s_\Omega) \ve(\Sigmao \iota(f) ) =
 \ve( s_\Omega \Sigmao \iota(f) ) =
 \ve( - \Sigmao \iota(f) ) =
  S_\Omega \ve(\Sigmao \iota(f)).
 \]
 The closed operators $S-\Omega$ and $\Gamma(s_\Omega)$
 therefore agree on
 $\Exps\big(\Sigmao \iota (\subspace)\big) = \Weyl_{\Sigmao}\Omega$,
 which is a core for $S_\Omega$,
 so $S_\Omega \subset \Gamma(s_\Omega)$.
 Applying this with $(\Sigmao, S_\Omega)$ replaced by
 $(\Sigmaoprime, F_\Omega)$ gives $F_\Omega \subset \Gamma(f_\Omega)$,
 so we also have
 \[
 S_\Omega =
 F_\Omega^* \supset \Gamma(f_\Omega)^* =
 \Gamma(f_\Omega^*) =
 \Gamma(s_\Omega).
 \]
 Therefore the required equality holds, and the proof is complete.
\end{proof}

 We make two simple observations,
 as motivation for the following result.

 \begin{rems}
 If $\Sigmao$ is closed (so that $\Sigma = \Sigmao$), then
 \[
 \Ran \Sigma \subset H_1 + i H_1.
 \]
 Thus, if $\Sigmao$ is surjective (and thus also closed) then
 \begin{equation}
 \label{H = K}
 H_1 + i H_1 = \Kil^{\op 2}.
 \end{equation}
 \end{rems}

 \begin{propn}
 \label{cor to E-O}
 Let $(\Sigmao, \subspace)$ be as in Theorem~\ref{E-O theorem},
 and assume~\eqref{H = K}.
 Then the following hold\tu{:}
  \begin{alist}
 \item
 $s_\Omega^2 = I_{\Kil^{\op 2}}$,
 in particular $s_\Omega$ is bounded\tu{;} it is given by
 \[
 s_\Omega (\zeta + i \eta) = - \zeta + i \eta
 \qquad
 (\zeta, \eta \in H_1).
 \]
 \item
 If also $\Sigma$ is surjective then
 \begin{rlist}
 \item
 $s_\Omega \Sigma = \Sigma K^\pi$, so
 $s_\Omega = \Sigma K^\pi \Sigma^{-1}$.
 \item
 $(s_\Omega \ot S_\Omega)(\Sigma \ot I_\Fock)
 \subset
 s_\Omega \Sigma \ot S_\Omega
 \subset
 (\Sigma \ot I_\Fock) (K^\pi \ot S_\Omega),
 $
 moreover, the second operator is the closure of the first.
 \end{rlist}
 \item
 If $\Sigma$ is surjective and we assume further that
 there is a real subspace $\domain$ of
 $\subspace$ such that
 \begin{equation}
 \label{refinement}
 \iota( K \domain) \subset \Dom \Sigma^* \Sigma'
 \ \text{ and } \
 \Lin_\Comp \Sigma' \iota (K\domain)
 \text{ is dense in } \Kil^{\op 2},
 \end{equation}
 then the conclusion in \tu{(}b\tu{)}\tu{(}ii\tu{)} has the following refinement\tu{:}
 \[
 \Dom (s_\Omega \ot S_\Omega) ( \Sigma \ot I_\Fock) =
 \Dom s_\Omega \Sigma \ot S_\Omega \cap \Dom \Sigma \ot I_\Fock.
 \]
 \end{alist}
 \end{propn}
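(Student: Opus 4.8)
The plan is to dispatch the four assertions in turn, drawing throughout on Theorem~\ref{E-O theorem} and on the appendix facts about composites and tensor products of (possibly conjugate-linear) closed operators. For~(a) I would start from Theorem~\ref{E-O theorem}(b): under~\eqref{H = K} the inclusion $\Dom s_\Omega^2\supset H_1 + iH_1 = \Kil^{\op 2}$ forces $\Dom s_\Omega = \Kil^{\op 2}$, so the closed conjugate-linear operator $s_\Omega$ is everywhere defined and hence bounded, with $s_\Omega^2 = I_{\Kil^{\op 2}}$. To read off the explicit formula I would note that generic position gives $H_1\cap H_2^{\re\perp}=\{0\}$, and since $H_2^{\re\perp}=iH_1$ this means $H_1\cap iH_1=\{0\}$; hence every vector decomposes uniquely as $\zeta+i\eta$ with $\zeta,\eta\in H_1$, and the values $s_\Omega\zeta=-\zeta$ on $H_1$, $s_\Omega\zeta=\zeta$ on $iH_1$ from Theorem~\ref{E-O theorem}(b) give $s_\Omega(\zeta+i\eta)=-\zeta+i\eta$.

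For~(b)(i) the identity $s_\Omega\Sigmao=\Sigmao K^\pi$ of Theorem~\ref{E-O theorem}(a) must be pushed to the closure. Since $K^\pi\iota=-\iota$, the domain $\Lin_\Comp\iota(\subspace)$ of $\Sigmao$ is $K^\pi$-invariant, so $s_\Omega\Sigmao$ and $\Sigmao K^\pi$ really are the same operator there; as $s_\Omega$ (by~(a)) and $K^\pi$ are bounded bijections, closing each side yields $\overline{s_\Omega\Sigmao}=s_\Omega\Sigma$ and $\overline{\Sigmao K^\pi}=\Sigma K^\pi$, whence $s_\Omega\Sigma=\Sigma K^\pi$. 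For the rearrangement $s_\Omega=\Sigma K^\pi\Sigma^{-1}$ I would note that in the surjective case $\Sigma=\Sigmao$ and that $\Sigmao$ is injective: a kernel vector $\iota(f)+i\iota(g)$ forces $\Sigmao\iota(f)\in H_1\cap iH_1=\{0\}$, and $\Sigmao\circ\iota$ is symplectic, hence injective, so $f=g=0$; thus $\Sigma$ is bijective and may be inverted.

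For~(b)(ii) I would substitute~(b)(i) to rewrite the middle term as $\Sigma K^\pi\ot S_\Omega$. The first inclusion $(s_\Omega\ot S_\Omega)(\Sigma\ot I_\Fock)\subset s_\Omega\Sigma\ot S_\Omega$ is then the standard composite-inside-product inclusion (Proposition~\ref{Lemma 3}), valid because $s_\Omega$ is bounded. The second relation $\Sigma K^\pi\ot S_\Omega\subset(\Sigma\ot I_\Fock)(K^\pi\ot S_\Omega)$ is the reverse of the generic inclusion, so the point is that here it is actually an \emph{equality}: because $K^\pi$ is a bounded bijection, precomposing $\Sigma\ot I_\Fock$ with $K^\pi\ot S_\Omega$ does not shrink the closed tensor product. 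Finally, since $\Sigma K^\pi\ot S_\Omega$ is a tensor product of closed operators it is closed, and the elementary tensors in $\Dom\Sigma\otul\Dom S_\Omega$ lie in the innermost composite and form a core for it; hence the outermost operator is the closure of the innermost, as claimed. The only real care is the domain bookkeeping, handled via Proposition~\ref{Lemma 3}, Corollary~\ref{tensor invariance} and Lemma~\ref{Lemma 1}.

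For~(c) the inclusion $\subset$ is immediate, since the composite of~(b)(ii) is contained in both $s_\Omega\Sigma\ot S_\Omega$ and $\Sigma\ot I_\Fock$. The substance is the reverse inclusion: given $x\in\Dom(\Sigma\ot I_\Fock)\cap\Dom(s_\Omega\Sigma\ot S_\Omega)$, one must show $(\Sigma\ot I_\Fock)x\in\Dom(s_\Omega\ot S_\Omega)=\Dom(I_{\Kil^{\op 2}}\ot S_\Omega)$ (using $s_\Omega$ bounded). Here I would invoke~\eqref{refinement}: the vectors $\Sigma'\iota(Kh)$ for $h\in\domain$ lie in $\Dom\Sigma^*$, and by the stated density of $\Lin_\Comp\Sigma'\iota(K\domain)$ they span a core rich enough to apply the appendix criterion for the domain of a composite to equal the intersection of the domains. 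I expect this to be the main obstacle: manufacturing from~\eqref{refinement} a core on which $S_\Omega$ (equivalently $F_\Omega=s_\Omega^*$) is controlled and transferring it through $\Sigma\ot I_\Fock$, the density clause in~\eqref{refinement} being exactly what guarantees the core is large enough to force the $\supset$ inclusion.
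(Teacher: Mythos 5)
Part (c) is where the substance of the proposition lies, and there your proposal is a plan rather than a proof. There is no ``appendix criterion for the domain of a composite to equal the intersection of the domains'': nothing in Lemma~\ref{Lemma 1}, Lemma~\ref{Lemma 2} or Proposition~\ref{Lemma 3} says anything of the sort, and no unconditional criterion could exist --- the whole point of the extra hypothesis~\eqref{refinement} is that this domain equality is not automatic. The argument that actually works is a duality argument which your sketch never formulates: for $x \in \Dom s_\Omega \Sigma \ot S_\Omega \cap \Dom \Sigma \ot I_\Fock$, set $y := (\Sigma \ot I_\Fock)x$ and show $y \in \Dom s_\Omega \ot S_\Omega$ by using $s_\Omega \ot S_\Omega = (f_\Omega \ot F_\Omega)^*$ and $(s_\Omega \Sigma \ot S_\Omega)^* = \Sigma^* f_\Omega \ot F_\Omega$, verifying the pairing identity $\ip{(f_\Omega \ot F_\Omega)\alpha}{y} = \ip{(\Sigma^* f_\Omega \ot F_\Omega)\alpha}{x}$ for all $\alpha$ in a core for $f_\Omega \ot F_\Omega$ contained in $\Dom \Sigma^* f_\Omega \ot F_\Omega$. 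The two concrete facts that make this verifiable, both absent from your proposal, are: first, by Theorem~\ref{E-O theorem}(f) and $K^\pi \circ \iota = -\iota$, each $u = \Sigma' \iota(\ol{g})$ with $g \in \domain$ is an eigenvector, $f_\Omega u = -u$; secondly, taking $\alpha = u \ot T\Omega$ with $T \in \Noise'_{(\Sigma,\subspace)}$ gives $F_\Omega T\Omega = T^*\Omega$, so that $(f_\Omega \ot F_\Omega)\alpha = -\Sigma' \iota(\ol{g}) \ot T^*\Omega$, after which the first clause of~\eqref{refinement} legitimises moving $\Sigma^*$ across the inner product, while the density clause (together with boundedness of $f_\Omega$ and the fact that $\Noise'_{(\Sigma,\subspace)}\Omega$ is a core for $F_\Omega$) guarantees that such $\alpha$ span a core. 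Without these steps the inclusion $\supset$ in (c) is simply not established.

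There is also a definite error in your (b)(i): the assertion ``in the surjective case $\Sigma = \Sigmao$'' is false --- surjectivity of the closure $\Sigma = \ol{\Sigmao}$ does not make $\Sigmao$ closed (the remark preceding the proposition concerns the different hypothesis that $\Sigmao$ itself be surjective). Consequently your kernel computation, carried out on $\Lin_\Comp \iota(\subspace)$, yields injectivity of $\Sigmao$ only, not of $\Sigma$; the formula $s_\Omega = \Sigma K^\pi \Sigma^{-1}$ requires $\Sigma$ to be bijective, which the paper tacitly reads into the hypothesis (consistently with its use of ``$\Sigma^{-1}$ is bounded'' and with Assumption (d) of Section~\ref{section: qf states for stochastic analysis}), so your purported derivation is not a repair. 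Relatedly, in (b)(ii) the first inclusion is not a ``standard'' item of Proposition~\ref{Lemma 3} and does not follow from boundedness of $s_\Omega$: the paper obtains it from boundedness of $\Sigma^{-1}$, via the factorisation $s_\Omega \ot S_\Omega = (s_\Omega \Sigma \ot S_\Omega)(\Sigma^{-1} \ot I_\Fock)$ followed by right-composition with $\Sigma \ot I_\Fock$. Your core argument for the ``moreover'' clause is essentially the paper's ($\Dom \Sigma \otul \Dom S_\Omega$ is a core for the closed operator $s_\Omega \Sigma \ot S_\Omega$ and lies in the domain of the first operator), but note that the claim is that the \emph{second} operator is the closure of the first, not the ``outermost''; your added claim that the second inclusion is an equality, while in fact correct when $\Sigma$ is invertible (by parts (c) and (e) of Proposition~\ref{Lemma 3}), is neither asserted nor needed. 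Part (a) of your proposal is correct and agrees with the paper.
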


 \begin{proof}
 (a)
 This follows immediately from Part (b) of
 Theorem~\ref{E-O theorem}.

 (b) (i)
 We have $s_\Omega \Sigmao = \Sigmao K^\pi$ and so,
 since $s_\Omega^2 = I_{\Kil^{\op 2}}$,
 $\Sigmao = s_\Omega \Sigmao K^\pi$.
 Since also ($K^\pi)^2 = I_{\Kil^{\op 2}}$, it follows that
 $\Sigma = \ol{s_\Omega \Sigmao K^\pi} = s_\Omega \Sigma K^\pi$
 and (i) follows.

 (b) (ii)
 Since $s_\Omega \Sigma$ is closed and $\Sigma^{-1}$ is bounded
 we have
 \[
 s_\Omega \ot S_\Omega =
 s_\Omega \Sigma \Sigma^{-1} \ot S_\Omega =
 \big( s_\Omega \Sigma \ot S_\Omega \big) ( \Sigma^{-1} \ot I_\Fock)
  =
 \big( \Sigma K^\pi \ot S_\Omega \big) ( \Sigma^{-1} \ot I_\Fock )
 \]
 (by Part (d) of Proposition~\ref{Lemma 3}), therefore
 \begin{align*}
 ( s_\Omega \ot S_\Omega ) (\Sigma \ot I_\Fock)
 \subset
 s_\Omega \Sigma  \ot S_\Omega
 &=
 \Sigma K^\pi \ot S_\Omega
 \\
 &\subset
 ( \Sigma \ot I_\Fock ) ( K^\pi \ot S_\Omega ),
 \end{align*}
 by Part (e) of Proposition~\ref{Lemma 3}.
 Since
 $s_\Omega \Sigma  \ot S_\Omega$ is closed and the domain of the LHS
 of this inclusion contains $\Dom \Sigma \otul \Dom S_\Omega$
 which is a core for the middle term, (ii) follows.

 (c)
 Let $x \in
 \Dom s_\Omega \Sigma \ot S_\Omega \cap \Dom \Sigma \ot I_\Fock$.
 Since
 $(s_\Omega \Sigma \ot S_\Omega)^* =
 \Sigma^* f_\Omega \ot F_\Omega$,
 to see that
 $x \in
 \Dom (s_\Omega \ot S_\Omega) ( \Sigma \ot I_\Fock)$
 if suffices to verify that
 \begin{equation}
 \label{verify}
 \big\langle
 (f_\Omega \ot F_\Omega) \alpha,
 (\Sigma \ot I_\Fock) x
 \big\rangle
 =
 \big\langle
 (\Sigma^* f_\Omega \ot F_\Omega) \alpha,
  x
 \big\rangle
 \end{equation}
 for all vectors $\alpha$ from
 a subset of $\Dom \Sigma^* f_\Omega \ot F_\Omega$
 which is a core for $f_\Omega \ot F_\Omega$.
 Since $f_\Omega$ is bounded, it suffices to verify~\eqref{verify}
 for vectors $\alpha$
 of the form $u \ot T\Omega$ where $T\in \Noise'_{(\Sigma,\subspace)}$ and
 $u$
 is
 from a total subset of $\Kil^{\op 2}$.
 By assumption we may take $u$ from $\Sigma' \iota (K \domain)$.
 Now
 \[
 (f_\Omega \ot F_\Omega) \Sigma' \iota(\ol{g}) \ot T\Omega =
 f_\Omega \Sigma' \iota(\ol{g}) \ot T^* \Omega =
 - \Sigma' \iota(\ol{g}) \ot T^* \Omega
 \]
 for all $g\in\domain$ and $T\in\Noise'_{(\Sigma,\subspace)}$ and
 so, for such $\alpha$,
 \[
 \text{LHS of } \eqref{verify} =
 \big\langle
 - \Sigma^* \Sigma' \iota(\ol{g}) \ot T^*\Omega,
 x
 \big\rangle
 =
 \text{ RHS of } \eqref{verify},
 \]
 as required.
 \end{proof}

 The elementary observation contained in the following lemma
 is relevant to the examples below.
\begin{lemma}
\label{V plus KV}
 For any real subspace $V$ of $\Kil$,
 \[
 V \op \{ 0 \} =
 \big\{
 \iota(f) - i \iota(if) : f \in V
 \big\}
 \ \text{ and } \
 \{ 0 \} \op K V =
 \big\{
 \iota(f) + i \iota(if) : f \in V
 \big\}.
 \]
 In particular,
 if $V$ is a complex subspace of $\Kil$ then
 \[
 \Lin_\Comp \iota(V) =
 V \op K V.
 \]
\end{lemma}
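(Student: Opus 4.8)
The plan is to reduce everything to a direct computation of $\iota(f)$ and $\iota(if)$ from the definition~\eqref{iota defined}, using that $K$ is conjugate-linear so that $\overline{if} = -i\,\ol{f}$. This gives
\[
\iota(f) = \binom{f}{-\ol{f}}
\ \text{ and } \
\iota(if) = \binom{if}{\,i\ol{f}\,}
\qquad (f\in\Kil).
\]
Multiplying the second by $i$ and forming the two combinations appearing in the statement then yields the elementary identities
\[
\iota(f) - i\,\iota(if) = \binom{2f}{0}
\ \text{ and } \
\iota(f) + i\,\iota(if) = \binom{0}{-2\ol{f}}
\qquad (f\in\Kil).
\]

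Next I would deduce the two displayed set equalities. As $f$ runs over the real subspace $V$, the first combination runs over $\{\binom{2f}{0}:f\in V\}$; since $2V=V$ (a real subspace is stable under multiplication by any nonzero real scalar), this is exactly $V\op\{0\}$. Likewise the second combination runs over $\{\binom{0}{-2\ol{f}}:f\in V\}=\{0\}\op(-2KV)$, and $-2KV=KV$ because $KV$ is again a real subspace ($K$ being additive and commuting with real scalars). This establishes both identities, valid for an arbitrary real subspace $V$.

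For the final assertion I would assume $V$ is a complex subspace, so that $iV=V$ and both $V$ and $KV$ are complex subspaces of $\Kil$; consequently $V\op KV$ is a complex subspace of $\Kiltwo$. The two identities exhibit every element of $V\op\{0\}$ and of $\{0\}\op KV$ as a complex-linear combination of $\iota(f)$ and $\iota(if)$ with $f,if\in V$, whence $V\op KV=(V\op\{0\})+(\{0\}\op KV)\subset\Lin_\Comp\iota(V)$. The reverse inclusion is immediate, since $\iota(f)=\binom{f}{-\ol{f}}\in V\op KV$ for each $f\in V$ and $V\op KV$ is a complex subspace, giving $\Lin_\Comp\iota(V)\subset V\op KV$.

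There is no serious obstacle here; the only things to watch are the bookkeeping of the signs and factors of two produced by the conjugate-linearity of $K$, and the (needed) observation that $V\op KV$ is a \emph{complex} subspace—this is precisely where complexity of $V$ enters and secures the reverse inclusion in the final identity.
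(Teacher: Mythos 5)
Your proposal is correct and follows essentially the same route as the paper: the paper computes the operator identities $\iota - (J\op J)\iota J = 2\left[\begin{smallmatrix} I \\ 0 \end{smallmatrix}\right]$ and $\iota + (J\op J)\iota J = -2\left[\begin{smallmatrix} 0 \\ K \end{smallmatrix}\right]$ with $J: f\mapsto if$, which are exactly your pointwise identities $\iota(f)-i\,\iota(if)=\binom{2f}{0}$ and $\iota(f)+i\,\iota(if)=\binom{0}{-2\ol{f}}$. You merely spell out the details the paper leaves to ``the result follows'' (absorbing the factors $2$ and $-2$ into the real subspaces, and the complex-subspace argument for the last assertion), all correctly.
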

 \begin{proof}
 Let $J$ be the real-linear map $f \mapsto if$ on $\Kil$.
 Then
 \[
 \iota = \begin{bmatrix} I \\ -K \end{bmatrix}
 \ \text{ and }\
(J \op J) \iota J = - \begin{bmatrix} I \\ K \end{bmatrix},
 \]
 so
 \[
 \iota - (J \op J) \iota J = 2
 \begin{bmatrix} I \\ 0 \end{bmatrix}
  \ \text{ and }\
  \iota + (J \op J) \iota J = - 2
 \begin{bmatrix} 0 \\ K \end{bmatrix}.
 \]
 The result follows.
 \end{proof}

\begin{example}[Guage-invariant quasifree states]
 Let $\subspace$ be the complex subspace $\Dom T^{1/2}$ of $\Kil$,
 where $T$ is a nonnegative selfadjoint operator on $\Kil$,
  and let $ \Sigmao$ be the nonnegative selfadjoint operator
\[
 \Sigma_T :=
  \begin{bmatrix}
 \sqrt{I + T} &
 \\
 & K\, \sqrt{T}\, K
 \end{bmatrix}.
\]
 It follows from the functional calculus for $T$ that
 $\Sigmao \circ \iota$ is symplectic and
 $\norm{\Sigmao \iota(v)} =
 \norm{\sqrt{I + 2T} v} \geq \norm{v}$ ($v\in\subspace$),
 so there is a unique quasifree state on $CCR(\subspace)$ with
 characteristic function
 \[
 \wh{\varphi}_T : v \mapsto
 e^{-\frac{1}{2} \norm{\sqrt{I + 2T}  v}^2}.
 \]
 Moreover, since
 $\wh{\varphi}_T(z v) = \wh{\varphi}_T(v)$ ($z\in\Torus$),
 the state is guage-invariant.
 Note also that
 $H_1$ and $H_2$ are the closures of the ranges of the
 respective operators
\[
 \begin{bmatrix}
 \sqrt{I+T} \\ -K \sqrt{T}
 \end{bmatrix}
 \text{ and }
 \begin{bmatrix}
 \sqrt{T} \\ -K \sqrt{I+T}
 \end{bmatrix}.
\]
 The degenerate case where $T = 0$ is the Fock state.
 On the other hand if $T$ is injective then
 $\Sigma_T$ has dense range and
 it is straightforward to verify that
 $(H_1,H_2)$ is in generic
 position, so Theorem~\ref{E-O theorem} applies.
 The associated operators are then
 \[
 j_\Omega =
 \begin{bmatrix}
  & K
  \\
  K &
 \end{bmatrix},\
\delta_\Omega^{1/2} =
 \begin{bmatrix}
 \sqrt{I+T^{-1}}^{-1} &
 \\
 & K \sqrt{I+T^{-1}} \ K
 \end{bmatrix}\
 \text{ and }\
 \Sigmaoprime=\Sigma_T',
 \]
 where
 \[
 \Sigma'_T :=
 \begin{bmatrix}
 & \sqrt{T}
 \\
 K \sqrt{I+T} \ K &
 \end{bmatrix}.
 \]
 Thus $j_\Omega = K^\pi$.
 Note that $\Sigma_T$ and $\Sigma_T'$
 are both closed, and~\eqref{refinement} holds with
 $\domain$ equal to $\Dom T$ since
 \[
 \Sigma^*_T \Sigma_T' =
 \frac{1}{2}
 \begin{bmatrix}
 & \sqrt{T(I+T)} \\ K \sqrt{T(I+T)} \, K
 \end{bmatrix}.
 \]
 Thus,
 if $T$ is bijective
 then so is $\Sigma_T$ and
 Proposition~\ref{cor to E-O} applies.
 Note that in this case
 $T^{-1}$ is bounded so
 the boundedness of
 $\delta^{1/2}_\Omega$, and thus also of
 $s_\Omega$, is manifest.
 Moreover, setting $A = \log(I+T^{-1})$,
 we have $I+2T = \coth A$.
 The case $A = \frac{\beta \hbar}{2} I$ then corresponds to
 the temperature state of $CCR(\Kil)$
 with inverse temperature $\beta$ (\cite{BrR}).
\end{example}

\begin{example}[Squeezed states]
 The above guage-invariant quasifree states
 may be `squeezed' by composing with the
 Bogoliubov automorphism $\phi_Q$ of $CCR(\subspace)$
 induced by a symplectic automorphism $Q$ of $\subspace$.
 We use the following structure theorem from
 \cite{hon}.
 If either $\Kil$ is separable,
 or $Q$ is bounded
 (as a densely defined operator on $\Kil$,
 viewed as a real Hilbert space),
 then $Q$
 is the restriction of an operator of the form
 \[
 U( \cosh\! P - K' \sinh\! P)
 \]
 to $\subspace$, where $U$, $K'$ and $P$ are operators on $\Kil$,
 $U$ being unitary,
 $K'$ another conjugation, and
 $P$ a second nonnegative selfadjoint operator, and
 the following consistency conditions hold:
  \begin{alist}
  \item
 For
 $R \in
 \big\{
 U \cosh\! P,\ U K' \sinh\! P,\ \cosh\! P \ U^*,\ \sinh\! P \ K' U
 \big\}$,
 \[
 \subspace \subset \Dom R \text{ and }
 R(\subspace) \subset \subspace.
 \]
  \item
  $K'$ commutes with the spectral projectors of $P$.
 \item
 $\subspace$ is a core for $\sinh^2\! P$;
 \end{alist}
 moreover if $(\wt{U}, \wt{K}', \wt{P})$
 is another such parameterisation of $Q$ then
 $(\wt{U}, \wt{P}) = (U,P)$, and
 $\wt{K}'$ and $K'$ agree on $\ol{\Ran} P$.
  In terms of these,
  $\Sigma_T \circ \iota \circ Q = \Sigma_{T,Q} \circ \iota$
  where
 \[
 \Sigma_{T,Q} = \Sigma_T ( U \op K U K' ) \Gamma (I \op K' K)
 \text{ for }
 \Gamma =
 \begin{bmatrix}
 \cosh\! P & \sinh\! P
 \\
 \sinh\! P
 & \cosh\! P
 \end{bmatrix},
 \]
 the corresponding quasifree
 state on $CCR(\subspace)$ has characteristic function
 \[
 \wh{\varphi}_{T, Q}: v \mapsto
 e^{-\frac{1}{2} \norm{\sqrt{I + 2T}\ Q v}^2}.
 \]
 If $\Sigmao := \Sigma_{T,Q}$ is closable with dense range
 (for example if $P$ is bounded)
 then Theorem~\ref{E-O theorem} applies,
 $H_1$, $H_2$, $j_\Omega$ and $\delta_\Omega^{1/2}$ are as
 in the gauge-invariant case above, and
 \[
  \Sigma'_{T,Q} :=
 \Sigma'_T (K \op K) ( U \op K U K' ) \Gamma ( K \op K' ).
 \]
\end{example}

\section{Quasifree states for stochastic analysis}
 \label{section: qf states for stochastic analysis}

 We now specialise our quasifree states for stochastic analysis, and
 we identify natural conditions on a pair
 $(\Sigmao, \subspace)$ ---
 consisting of a dense real subspace $\subspace$ of $\Kil$
 and closable operator $\Sigmao$ on $\Kiltwo$ with domain
 $\Lin_\Comp \iota(\subspace)$ ---
 for
 Assumptions~\eqref{I assump} and~\eqref{Sigma assumptions: basic}
 to hold,
 so that
 Theorem~\ref{E-O theorem} applies. We then show that this entails
 a key commutation relation between It\^o integration and the
 Tomita-Takesaki operators.

 The notation is as for the previous section, but now
 $\Kil = L^2(\Rplus;\noise)$
 as in Section~\ref{section: Ito commutation relations}
 except that now
  $\noise$ is the complexification
 of a separable real Hilbert space $\noise^\Real$.
 Thus $\Kiltwo = L^2(\Rplus;\noisetwo)$ and
 $\Kil$ is the complexification of
 $L^2(\Rplus;\noise^\Real)$;
 the conjugation on $\Kil$ being that induced by the conjugation
 on $\noise$ pointwise:
 \[
 \ol{f}(t):= \ol{f(t)}
 \qquad
 (t\in\Rplus).
 \]
 \textbf{Assumptions.}
 Setting $\Sigma := \ol{\Sigmao}$ and
 $\Sigma_t := V_t^* \Sigma V_t$
 where
 $V_t$ is the inclusion map $\Kiltwo_t  \to \Kiltwo$,
 we now make the following assumptions on
 the pair $(\Sigmao, \subspace)$:
 \begin{alist}
 \item
 $\Sigmao \circ \iota$ is symplectic and, for all $t\in\Rplus$,
 \item
 $ \subspace_t := p_t(\subspace) \subset \subspace$,
 \item
 $p_t \Sigmao \subset \Sigmao p_t$,
 \item[(d)]
 $\Sigma_t$ is bijective with bounded inverse,
 \end{alist}
 and consider the further alternative assumptions:
 \begin{alist}
 \item[(e)] $V_t^*(H_1 + i H_1) = \Kil^{\op 2}_t$ and  there is a real subspace $\domain_t$ of
 $W_t^*\subspace_t$, where $W_t$ is the inclusion $\Kil_t\to\Kil$, such that \\
 ${}\hspace{0.75cm}\iota( K_t \domain_t) \subset \Dom \Sigma^*_t \Sigma'_t
   \ \text{ and } \
   \Lin_\Comp \Sigma'_t \iota (K_t\domain_t)
   \text{ is dense in } \Kil_t^{\op 2}. $
 \item[($\text{e}_+$)] $\Sigma_t$ is bounded for all $t\in\Real_+$.
 \end{alist}

\begin{rems} (i) Here are some consequences of Assumptions (a)--(d).

 ($\alpha$)
 $\Sigma\, \affiliated  L^\infty(\Rplus) \otol B(\noisetwo)$;
 this follows from Lemma~\ref{invariance = affiliation}.

 ($\beta$)
  For all $t\in\Rplus$,
  $\Sigma_t$ is closed with core $\Lin_\Comp \iota(\subspace_t)$;
  this follows from Part (c) of Lemma~\ref{Lemma 1}.

 ($\gamma$)
 $\Sigma$ is injective.

 ($\delta$)
 $\bigcup_{t\geq 0} \Lin_\Comp \iota(\subspace_t)$ is
 a core for $\Sigma$.

 ($\epsilon$)
 For all $t\in\Rplus$,
 $\Ran \Sigmao_t$ is dense in $\Kiltwo_t$,
 where $\Sigmao_t:= V_t^* \Sigmao V_t$.

 ($\zeta$)
 For all $t\in\Rplus$, $p_t\Sigma\subset \Sigma p_t$; this follows from Lemma \ref{Lemma 1} (a).

 \noindent (ii)
 Notice that (e) is a localised version of the hypotheses in Proposition \ref{cor to E-O}.
 Indeed (e) implies the local boundedness property $V_t^*s_\Omega V_t\in B(\Kiltwo_t)$
 for every ${t\in\Rplus}$ as follows.
 Setting $\subspace^t := (I - p_t) \subspace$, the
  assumptions (a)-(d) give us a decomposition
  $\Sigma=\Sigma_t\oplus\Sigma^t$ on
 $\subspace=\subspace_t\oplus\subspace^t$, and thus a pair of von Neumann algebras
 $\Noise_{(\Sigma_t,\subspace_t)}$ and $\Noise_{(\Sigma^t,\subspace^t)}$ for each $t\geq0$.
 Using Weyl operators one sees that
 $\Noise_{(\Sigma,\subspace)}=
 \Noise_{(\Sigma_t,\subspace_t)}\otol\Noise_{(\Sigma^t,\subspace^t)}$.
 By Theorem \ref{E-O theorem} this gives the decomposition
 $$
 \Gamma(s_\Omega)=S_\Omega=S_{\Omega_{t)}}\otimes S_{\Omega_{[t}}=
 \Gamma(s_{\Omega_{t)}})\otimes\Gamma(s_{\Omega_{[t}})=
 \Gamma(s_{\Omega_{t)}}\oplus s_{\Omega_{[t}}),
 $$
 so that $s_\Omega=s_{\Omega_{t)}}\oplus s_{\Omega_{[t}}$.
  It follows from Proposition \ref{cor to E-O} (a) that,
  for all $t\geq 0$,
 $V_t^*s_\Omega V_t=s_{\Omega_{t)}}\in B(\Kiltwo_t)$.

 \noindent (iii)
 Assumption ($\text{e}_+$) implies (e).
 To see this note that if ($\text{e}_+$) holds then $\Sigma_t$ is bounded and invertible,
 and $V_t^*(H_1+iH_1)$ is a closed subspace of $\Kiltwo_t$
 containing $\Ran \Sigmao_t$, and so equals $\Kiltwo_t$.
 Now we claim that $W_t^*\subspace_t$ itself satisfies the conditions required from (e).
 To see this, note that
 $$
 \Sigma'_t\supset V_t^*j_\Omega \Sigmao (K\oplus K) V_t
 $$
 is defined on all of $\iota(K_tW_t^*\subspace_t)$ and, since $\Sigma_t$ is bounded,
 $\Sigma_t^*\Sigma'_t$ is defined here too.
 The decomposition $s_\Omega=s_{\Omega_{t)}}\oplus s_{\Omega_{[t}}$ in Remark (ii)
 gives $j_\Omega=j_{\Omega_{t)}}\oplus j_{\Omega_{[t}}$. Thus
 \begin{align*}
 \Sigma'_t\iota(K_tW_t^*\subspace_t)
 &=
 V_t^*j_\Omega \Sigmao (K\oplus K) V_t\iota(K_tW_t^*\subspace_t)
 \\&=
 V_t^*j_\Omega \Sigmao V_t\iota(W_t^*\subspace_t)
 =
 j_{\Omega_{t)}}\Sigma_t\iota(W_t^*\subspace_t).
 \end{align*}
 It now follows from $(\beta)$ and $(\epsilon)$ that
 $\Sigma'_t\iota(K_t W_t^* \subspace_t)$ is dense in $\Kiltwo_t$.

\end{rems}

 Recall that
 \begin{equation*}
 H_1:= \ol{\Sigmao \iota(V)} \text{ and }
 H_2 := H_1^{\sigma \perp} = i H_1^{\re \perp} = (i H_1)^{\re
 \perp}.
 \end{equation*}

 \begin{thm}
 \label{S Omega Ito}
 Under Assumptions \tu{(a)--(d)}
 on the pair $(\Sigmao, \subspace)$,
 the following hold.
 \begin{alist}
 \item
 $(H_1, H_2)$ is in generic position,
 so Theorem~\ref{E-O theorem} applies.
 \item
 $s_\Omega$ and $f_\Omega$ are affiliated to
 $L^\infty(\Rplus)\otol B(\noisetwo)$.
 \item
 $
 S_\Omega \Ito = \Ito \circ \big( s_\Omega \ot_\Omega S_\Omega \big)
 $.
 \end{alist}
 \end{thm}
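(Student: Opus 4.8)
The plan is to prove the three parts in turn, with the bulk of the effort going into part~(a); parts~(b) and~(c) will then follow quickly from Theorem~\ref{E-O theorem} and the machinery of Section~\ref{section: Ito commutation relations}. Throughout I shall use the consequences ($\alpha$)--($\zeta$) of Assumptions (a)--(d) recorded above, in particular that $\Sigma$ is injective, that $p_t\Sigma\subset\Sigma p_t$, and that each $\Sigma_t$ is closed.

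For part~(a) the idea is to localise in time and exploit the bijectivity hypothesis~(d). Since $p_t\Sigma\subset\Sigma p_t$, and since $\iota$ and the operation of real-orthogonal complementation both respect the decomposition $\Kiltwo = \Kiltwo_t\op\Kiltwo^t$, both $H_1$ and $H_2$ split as $H_i = H_i^{(t)}\op H_i^{[t}$ with $H_1^{(t)} = \ol{\Sigma_t\iota(\subspace_t)}$ and $H_2^{(t)} = (H_1^{(t)})^{\sigma\perp}$ inside $\Kiltwo_t$. Each of the four intersections defining generic position then splits as a direct sum over the two summands; projecting onto $\Kiltwo_t$ and letting $t\to\infty$ (the projections converging strongly to $I$) reduces the problem to showing that the local pair $(H_1^{(t)}, H_2^{(t)})$ is in generic position, for every $t$. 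Since $\Sigma_t$ is bijective with bounded inverse by Assumption~(d), I would verify the four local conditions by transporting them through the bounded injective operator $\Sigma_t^{-1}$ and invoking the symplectic relation $\im\ip{\Sigma_t\iota f}{\Sigma_t\iota g} = \im\ip{f}{g}$ together with the elementary fact $R_t\cap iR_t = \{0\}$, where $R_t = \iota(\Kil_t)$. For example, $\Sigma_t^{-1}$ carries $H_1^{(t)}\cap iH_1^{(t)}$ into $R_t\cap iR_t = \{0\}$, giving $H_1^{(t)}\cap iH_1^{(t)} = \{0\}$; surjectivity of $\Sigma_t$ gives density of $H_1^{(t)} + iH_1^{(t)}$; and $H_1^{(t)}\cap H_2^{(t)} = \{0\}$ follows by writing such a vector as $\Sigma_t\iota(f)$ with $\iota(f)\in\Dom\Sigma_t$, extending the symplectic relation to $f$ by closedness, and then testing against $g = if$. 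The remaining intersection reduces to one of these after multiplication by $i$. Marshalling these four conditions, with due care for the domains and closures of the possibly unbounded $\Sigma_t$, is the main obstacle.

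With part~(a) in hand, Theorem~\ref{E-O theorem} applies and part~(b) becomes a short verification. By Lemma~\ref{invariance = affiliation} it suffices to check the invariance condition $s_\Omega p_t\supset p_t s_\Omega$, and since $\Ran\Sigmao$ is a core for $s_\Omega$ it is enough to check it there. For $\zeta = \Sigmao\eta$ with $\eta\in\Lin_\Comp\iota(\subspace)$, the relation $s_\Omega\Sigmao = \Sigmao K^\pi$ from Theorem~\ref{E-O theorem}(a), together with the commutation of $p_t$ with both $K^\pi$ and $\Sigmao$ (the latter from Assumptions (b) and (c)), gives $s_\Omega p_t\zeta = \Sigmao K^\pi p_t\eta = p_t s_\Omega\zeta$; a routine closure argument upgrades this to the operator inclusion, so $s_\Omega$ is affiliated. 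As $f_\Omega = s_\Omega^*$ by Theorem~\ref{E-O theorem}(h), and affiliation passes to adjoints, $f_\Omega$ is affiliated too.

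Finally, for part~(c) I would recognise the statement as an instance of Theorem~\ref{Ito second quantisation commutation}. By Theorem~\ref{E-O theorem}(h) we have $S_\Omega = \Gamma(s_\Omega)$, which presents itself in the required form $A\ot\Gamma(T)$ on taking $\init = \Comp$, $T = s_\Omega$ and $A$ the conjugation $c\mapsto\ol c$ on $\Comp$: both factors being conjugate-linear, their tensor product is the conjugate-linear operator $\Gamma(s_\Omega)$. Since $T = s_\Omega$ is affiliated to $L^\infty(\Rplus)\otol B(\noisetwo)$ by part~(b), Theorem~\ref{Ito second quantisation commutation}---in the conjugate-linear form sanctioned by the remark following it---delivers $S_\Omega\Ito = \Ito\circ(s_\Omega\ot_\Omega S_\Omega)$, as required.
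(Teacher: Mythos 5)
Your parts (b) and (c) are correct and are essentially the paper's own arguments: the paper likewise verifies the invariance $p_t s_\Omega \subset s_\Omega p_t$ on the core $\Ran \Sigmao$ (using $s_\Omega \Sigmao \iota = -\Sigmao\iota$ rather than your equivalent $s_\Omega\Sigmao = \Sigmao K^\pi$, both from Theorem~\ref{E-O theorem}(a)), closes up, applies Lemma~\ref{invariance = affiliation}, and passes to $f_\Omega = s_\Omega^*$; and (c) is exactly the intended application of Theorem~\ref{Ito second quantisation commutation} in its conjugate-linear form with $\init = \Comp$, $T = s_\Omega$ and $X = \Gamma(s_\Omega) = S_\Omega$. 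For part (a) your organisation differs from the paper's: you split $H_i = H_i^{(t)} \op H_i^{[t}$ and prove generic position locally, then let $t\to\infty$, whereas the paper argues globally, its key device being that Assumption (d) forces every $F \in H_1$ to have truncations of the form $F_{[0,t]} = \Sigma\iota(f_t)$ with $\iota(f_t) \in \Dom\Sigma$. These are genuinely the same ingredients (bounded inverse of $\Sigma_t$, symplecticity, $R\cap iR = \{0\}$, density of $\Ran\Sigmao$), and three of your four local verifications are sound; the splitting itself is legitimate since $p_t\Sigmao \subset \Sigmao p_t$ and $\iota$, the real inner product and the symplectic form all respect $\Kiltwo = \Kiltwo_t \op \Kiltwo^t$.

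There is, however, a genuine gap in your treatment of $H_1^{(t)} \cap H_2^{(t)}$: the step ``testing against $g = if$'' is not available. The extended symplectic relation $\im\ip{\Sigma_t\iota(f)}{\Sigma_t\iota(g)} = \im\ip{f}{g}$ holds only for test vectors $g \in \subspace_t$ (or limits $g$ along which $\Sigma_t\iota(g^n)$ converges), and since $\subspace$ is merely a \emph{real} subspace while $\iota$ is only real-linear (so $\iota(if) \neq i\,\iota(f)$), there is no reason for $if$ to be admissible — note $f$ itself lies only in $\ol{\subspace_t}$. Were $g = if$ admissible, you would obtain $\im\ip{f}{if} = \norm{f}^2 = 0$ immediately, and genericity would follow without ever invoking the density of $\subspace \cap i\subspace$ in $\Kil$ — a consequence of~\eqref{I assump} on which the paper's proof leans at precisely this point, a sign that your shortcut is too strong. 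The repair is the paper's: for $h \in \subspace \cap i\subspace$ both $p_t h$ and $p_t(ih) = i\,p_t h$ lie in $\subspace_t$, and since $f \in \Kil_t$ one has $\ip{f}{p_t h} = \ip{f}{h}$, so the relation yields $\im\ip{f}{h} = 0$ and $\re\ip{f}{h} = \im\ip{f}{ih} = 0$; density of $\subspace \cap i\subspace$ then gives $f = 0$, hence $F_{[0,t]} = \Sigma_t\iota(f) = 0$, and your localisation scheme goes through.
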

 \begin{proof}
 (a)
 Let us abbreviate ${}^{\re \perp}$ to ${}^{\perp}$.
 We first make a general observation about elements of $H_1$.
 For $F\in H_1$, let $(f^n)$ be a sequence in $\subspace$ such that
 $\Sigma \iota(f^n) \to F$ and let $t\geq 0$.
 Then
 \begin{equation}
 \label{extra label}
 \Sigma_t^{-1} V_t^* F =
 \lim \Sigma_t^{-1} V_t^* \Sigma \iota(f^n) =
 \lim V_t^* \iota(f^n).
 \end{equation}
 Thus $(f^n_{[0,t]})$ converges, to $f_t \in \ol{\subspace_t}$ say, where
 $V_t^* \iota(f_t) = \Sigma_t^{-1} V_t^* F$ so
 $\iota(f_t) \in \Dom \Sigma$
 and,
 by~\eqref{extra label},
 $V_t^* F = \Sigma_t V_t^* \iota(f_t) = V_t^* \Sigma \iota(f_t)$, so
 $\Sigma \iota(f_t) = F_{[0,t]}$.

 (i)
  Let $F \in H_1 \cap H_2 = H_1 \cap (i H_1)^\perp$.
  Then, for all $t\in\Rplus$ and $g\in
  \subspace$,
  \begin{align*}
 0 =
 - \re \ip{F}{i \Sigma \iota(g_{[0,t]})}
 &= \im \ip{F}{\Sigma \iota(g_{[0,t]})}
 \\
 &=
 \im \ip{F_{[0,t[}}{\Sigma \iota(g)}
 =
 \im \ip{\Sigma \iota(f_t)}{\Sigma \iota(g)}
 =
 \im \ip{f_t}{g},
  \end{align*}
  since $\Sigma \circ \iota$ is symplectic.
 Thus, for $h \in \subspace \cap i \subspace$,
 $\im \ip{f_t}{h} = 0$ and
 $\re \ip{f_t}{h} = \im \ip{f_t}{ih} = 0$.
 Since $\subspace \cap i \subspace$ is dense in $\Kil$,
 this implies that $f_t = 0$ so $F_{[0,t]} = 0$.
 Letting $t$ vary we see that $F = 0$.
 Thus $H_1 \cap H_2$ is trivial.

(ii)
 By Remark ($\gamma$) it follows that $\Ran \Sigmao$ is dense.
 Therefore the triviality of $H_1^\perp \cap H_2$ follows from the
 relation
  \[
   H_1^\perp \cap H_2 =
 H_1^\perp \cap (i H_1)^{\perp} =
 ( H_1 + i H_2)^{\perp} \subset
 \big( \Ran \Sigmao \big)^{\perp} = \{0\}.
\]

(iii)
 Let $F\in H_1 \cap H_2^{\perp} = H_1 \cap i H_1$
 and set $R = \Ran \iota$.
 Then, for each $t\geq 0$,
 $F_{[0,t]} = \Sigma \iota(f_t)$ and
 $i F_{[0,t]} = \Sigma \iota(g_t)$
 for some $f, g \in \Kil$.
 Therefore, for each $t\geq 0$,
 \[
 F_{[0,t]} \in
 \Ran \Sigma \circ \iota \cap i \Ran \Sigma \circ \iota =
 \Sigma( R \cap i R) = \{ 0 \},
 \]
 so $F = 0$.
 Thus $H_1 \cap H_2^{\perp}$ is trivial.

  (iv)
 In view of the identity
 $H_1^{\perp} \cap H_2^{\perp} =
 iH_2 \cap iH_1 =
 i(H_1 \cap H_2)$,
 (i) implies that this subspace is trivial too.
 Therefore (a) holds.

(b)
 Since $f_\Omega = s_\Omega^*$, it
 suffices to show that $s_\Omega$ is so affiliated.
 Let $t\geq 0$. Then, for $f\in \subspace$,
 \begin{align*}
 p_t \Sigmao \iota(f) &=
 \Sigmao \iota(f_{0,t[})
 \in V^{(1) *} \Weyl_\Sigmao \Omega,
 \text{ and }
 \\
 s_\Omega p_t \Sigmao \iota(f) &=
 - \Sigmao \iota(f_{0,t[}) =
 p_t s_\Omega \Sigmao \iota(f).
 \end{align*}
 Thus $(p_t s_\Omega)_{| V^{(1) *}  \Weyl_\Sigmao \Omega} \subset s_\Omega p_t$.
 Since $s_\Omega p_t$ is closed and, by Part (b) of Theorem~\ref{E-O theorem},
 $V^{(1) *} \Weyl_\Sigmao \Omega$ is a core for $s_\Omega$,
 this implies that
 $p_t s_\Omega \subset s_\Omega p_t$.
 (b) therefore follows from
 Lemma~\ref{invariance = affiliation}.

 (c)
 This now follows from
 Theorem~\ref{Ito second quantisation commutation}.
 \end{proof}

\begin{rem}
 In~\cite{squeeze}
 an abstract noncommutative stochastic calculus
 is related to squeezed states,
 additive cocycles
 with respect to the natural shift are considered, and
 an It\^o table derived.
 In~\cite{LM} we derive the It\^o table for the
 general quasifree setting considered here.
\end{rem}

\begin{examples}
 For the squeezed quasifree states discussed in
 Section~\ref{section: CCR algebras and qf states},
 Assumptions (a)--(d) are satisfied if
 $T$ and $P$ are affiliated to $L^\infty(\Rplus) \otol B(\noise)$,
 $P$ is locally bounded,
 $K'$ is a pointwise conjugation on $\Kil$:
 $(K'f)(t) = k' f(t)$ ($t\in \Rplus$)
 for some conjugation $k'$ on $\noise$, and
 there is $\alpha \in L^\infty_{\loc}(\Rplus)$ such that
 $\alpha > 0$ almost everywhere and
 $T \geq M_{\alpha^{-1}} \ot I_\noise$.
 Assumption  ($\text{e}_+$)  is satisfied too if
 $\alpha$ may be chosen so that also
 $T \leq M_{\alpha} \ot I_\noise$.

 On the other hand,
 if $P$ is bounded and $T=I_{L^2(\Rplus)}\otimes Q$ where
 $Q$ is a closed, densely defined, unbounded and bijective operator
 then the resulting pairs ($\Sigmao, \subspace$) satisfy
 (a)--(e), but not ($\text{e}_+$).
\end{examples}

\section{Modified It\^o integral}
\label{section: modified Ito}

 In this section we establish the appropriate analogue of the
 abstract Kunita-Watanabe Theorem
 at the vector process level.

 Let $(\Sigmao, \subspace)$ be as in
 Section~\ref{section: qf states for stochastic analysis},
 take the notations
 $\Sigma$, $\Omega$, $\Noise_{(\Sigma,\subspace)}$ and  $V_t$ from
 Sections~\ref{section: qf states for stochastic analysis}
 and~\ref{section: CCR algebras and qf states},
 and
 fix a von Neumann algebra $\Init$
 acting on a separable Hilbert space $\init$,
 which we refer to as the \emph{initial algebra},
 with cyclic and separating vector $\cyclic$.
 Assumptions (a)-(e) are in operation.
 and we set $\Noise = \Noise_{(\Sigma, \subspace)}$,
 \begin{equation}
 \label{notations M to H}
 \InitNoise = \Init \otol \Noise, \
 \xi = \cyclic \ot \Omega, \
  \Xi = \InitNoise' \xi, \
 S = S_\xi,
 \text{ and }\,
 \initFock = \init \ot \Fock_{\noisetwo}.
 \end{equation}
 Thus the vector $\xi$ is cyclic and separating for
 the von Neumann algebra $\Mil$,
 $S = S_\cyclic \ot S_\Omega$
 (\cite{StZ}, 10.7),
 and
 the Hilbert space $\initFock$ is separable.
 Also write
 $P^\Omega_t$ for $V_\Omega^* P_t V_\Omega$,
 the restriction of
 $P_t$ on $\Kiltwo \ot \initFock$ to
 the subspace
 $L^2_{\Omega}(\Rplus; \noisetwo \ot \initFock)$ ---
 which conveniently extends to a map
 $L^2_{\Omega, \loc}(\Rplus; \noisetwo \ot \initFock)
 \to
 L^2_{\Omega}(\Rplus; \noisetwo \ot \initFock)$
 in a natural way.
 Finally, set
 \[
 k^\pi:= k \circ \pi = \pi \circ k, \
 K^\pi:= K \circ \pi \text{ and }
 K_t^\pi:= K_t \circ \pi,
 \]
 where $k$, $K$ and $K_t$ are the conjugations on
 $\noisetwo$, $\Kiltwo$ and $\Kiltwo_t$ respectively,
 and $\pi$ is the sum-flip on each of these orthogonal sums.

\begin{lemma}
 The following holds\tu{:}
 $$
 (s_\Omega\otimes S)(\Sigma V_t\otimes I_\initFock) \subset
 (\Sigma V_t\otimes I_\initFock)(K^\pi_t\otimes S).
 $$
 Under Assumption \tu{(}$\text{e}_+$\tu{)} this can be strengthened to an equality.
\end{lemma}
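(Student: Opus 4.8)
The plan is to push the whole problem onto the $\Kiltwo$-leg, reducing it to a single intertwining identity there, and then to tensor that identity up using the formal rules for compositions and tensor products of (conjugate-linear) unbounded operators. The structural point I would exploit is that $\Sigma V_t$ has range inside $\Kiltwo_t$, the subspace on which $s_\Omega$ restricts to the \emph{bounded} operator $s_{\Omega,t} := V_t^* s_\Omega V_t = s_{\Omega_{t)}}$ of Remark~(ii); this is exactly what Assumption~(e) buys, and it is what makes the composite $(s_\Omega \ot S)(\Sigma V_t \ot I_\initFock)$ tractable even though $s_\Omega$ is only locally bounded in general.

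First I would assemble three identities on the noise space. From the invariance $p_t\Sigma \subset \Sigma p_t$ (Remark~$(\zeta)$) the operator $\Sigma$ preserves $\Kiltwo_t$, whence $\Sigma V_t = V_t\Sigma_t$ with $\Sigma_t = V_t^*\Sigma V_t$ a closed bijection with bounded inverse by Assumption~(d). From the orthogonal-sum splitting $s_\Omega = s_{\Omega_{t)}} \oplus s_{\Omega_{[t}}$ of Remark~(ii) I get $s_\Omega V_t = V_t s_{\Omega,t}$ with $s_{\Omega,t}$ bounded. Finally, for the cut-down pair $(\Sigma_t,\subspace_t)$ of Remark~(ii) --- bijective by~(d) and satisfying the local form~\eqref{H = K} granted by $V_t^*(H_1+iH_1)=\Kiltwo_t$ in~(e) --- Proposition~\ref{cor to E-O}(b)(i) yields $s_{\Omega,t}\Sigma_t = \Sigma_t K^\pi_t$. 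Stringing these together gives the leg identity
\[
 s_\Omega(\Sigma V_t) = s_\Omega V_t\Sigma_t = V_t s_{\Omega,t}\Sigma_t = V_t\Sigma_t K^\pi_t = (\Sigma V_t) K^\pi_t,
\]
a genuine equality of operators $\Kiltwo_t \to \Kiltwo$ since $K^\pi_t$ is a bounded involution preserving $\Dom\Sigma_t$.

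The inclusion then follows precisely as in the proof of Proposition~\ref{cor to E-O}(b)(ii): I would first use the elementary containment of a composition inside the tensor of compositions, then substitute the leg identity, then apply Part~(e) of Proposition~\ref{Lemma 3}, obtaining
\[
 (s_\Omega \ot S)(\Sigma V_t \ot I_\initFock) \subset s_\Omega(\Sigma V_t) \ot S = (\Sigma V_t)K^\pi_t \ot S \subset (\Sigma V_t \ot I_\initFock)(K^\pi_t \ot S).
\]
Throughout, $S = S_\cyclic \ot S_\Omega$ is treated as one closed operator on $\initFock$, so that the initial-space factor $S_\cyclic$ merely rides along; since $s_\Omega$, $S$ and $K^\pi_t$ are all conjugate-linear, I would note that the cited tensor facts apply verbatim in the conjugate-linear setting, as they are already invoked for such operators in Proposition~\ref{cor to E-O}.

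The step I expect to be the main obstacle is the passage to equality under~$(\text{e}_+)$, which is really a statement about domains. The inclusion above is typically strict: its left-hand domain demands $(\Sigma V_t \ot I_\initFock)x \in \Dom(s_\Omega \ot S)$, a strictly stronger requirement than lying in $\Dom(K^\pi_t \ot S)$ whenever $\Sigma_t$ is unbounded. Under~$(\text{e}_+)$, however, $\Sigma_t$ is bounded, so $\Sigma V_t = V_t\Sigma_t$ is a bounded bijection onto $\Kiltwo_t$ with bounded inverse; hence $\Sigma V_t \ot I_\initFock$ is everywhere defined and boundedly invertible onto $\Kiltwo_t \ot \initFock$, where $s_\Omega \ot S$ acts as $s_{\Omega,t}\ot S$. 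Both composite domains then collapse to $\Kiltwo_t \otul \Dom S$ and the two sides agree there by the leg identity, so the inclusions become equalities (invoking Part~(d) of Proposition~\ref{Lemma 3} to promote each containment once its relevant factor is bounded). I would take care to justify the domain collapse by pulling $\Dom(I_\Kiltwo \ot S)$ back through the bounded bijection $\Sigma V_t \ot I_\initFock$, which acts only on the first leg.
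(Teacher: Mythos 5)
Your proposal is correct and takes essentially the same route as the paper: both arguments rest on the splitting $s_\Omega = s_{\Omega_{t)}}\oplus s_{\Omega_{[t}}$ with $s_{\Omega_{t)}}$ bounded (Proposition~\ref{cor to E-O}(a) via Assumption (e)), the local intertwining $s_{\Omega_{t)}}\Sigma_t=\Sigma_t K^\pi_t$, the tensor manipulations of Propositions~\ref{cor to E-O}(b) and~\ref{Lemma 3}, and, for the equality under (\tu{$\text{e}_+$}), the same core argument on $\Dom \Sigma V_t \otul \Dom S$. The only cosmetic differences are that you prove the one-leg identity $s_\Omega \Sigma V_t = \Sigma V_t K^\pi_t$ before tensoring where the paper instead inserts $P_t\ot I$ and conjugates by $V_t \ot I$ inside the tensor expression, and your phrase about the domains ``collapsing to $\Kiltwo_t \otul \Dom S$'' should be read as asserting that this algebraic tensor is a \emph{common core} for the two closed composites (their full domains equal $\Dom(K^\pi_t\ot S)$, which is strictly larger), which is exactly how the paper concludes.
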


 \begin{proof}
 Recall that $s_\Omega=s_{\Omega_{t)}}\oplus s_{\Omega_{[t}}$
 and, by Proposition \ref{cor to E-O} (a),
 $s_{\Omega_{t)}}$ is bounded, hence $s_{\Omega_{t)}}\Sigma_t=\Sigma_tK^\pi_t$.
 Now, since $V_t$ is an isometry, applying Lemma \ref{Lemma 1} (d)
 together with Proposition \ref{cor to E-O} (b) we get
 \begin{align}
 (s_\Omega\otimes S)(\Sigma V_t\otimes I_\initFock)
 &=
 (s_\Omega\otimes S)(P_t\otimes I)(\Sigma V_t\otimes I_\initFock)
 \nonumber
 \\
 &=(V_t\otimes I)(s_{\Omega_t]}\otimes S)(\Sigma_t\otimes I_\initFock)
 \nonumber \\
 & \subset (V_t\Sigma_t\otimes I_\initFock)(K^\pi_t\otimes S)=
 (\Sigma V_t\otimes I_\initFock)(K^\pi_t\otimes S).
 \label{fundamentalinclusion}
 \end{align}

 If Assumption ($\text{e}_+$) holds then $( s_\Omega \ot S )(\Sigma V_t \ot I_\initFock)$
 is closed and
 defined on $\Dom \Sigma V_t \otul \Dom S$,
 which is a core for $s_\Omega\Sigma V_t\ot S$.
 Thus $( s_\Omega \ot S )(\Sigma V_t \ot I_\initFock)\supset s_\Omega\Sigma V_t\ot S=
 \Sigma V_tK^\pi_t\otimes S$ and~\eqref{fundamentalinclusion} can be strengthened to an equality.

\end{proof}

 \begin{lemma}
 \label{tedious}
 The following holds
 \begin{equation}
 \label{L}
 ( s_\Omega \ot_\Omega S )
 (\Sigma \ot_\Omega I_\initFock)
 P_t^\Omega \subset
 ( \Sigma \ot_\Omega I_\initFock )
 ( K^\pi \ot_\Omega S )
 P_t^\Omega
 \qquad
 (t \in \Rplus).
 \end{equation}
 Moreover, under Assumption \tu{(}$\text{e}_+$\tu{)}, this is an
 equality.
 \end{lemma}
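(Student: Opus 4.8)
The plan is to peel off the adapted tensor products through the isometry $V_\Omega$, convert the localization $P_t^\Omega$ into the integration-variable localization carried by $V_t$, and then quote the preceding lemma. To begin, I would note that each of the three factors $s_\Omega \ot S$, $\Sigma \ot I_\initFock$ and $K^\pi \ot S$ meets the hypotheses of Lemma~\ref{lemma: tensor Omega}: $s_\Omega$, $\Sigma$ and $K^\pi$ are affiliated to $L^\infty(\Rplus)\otol B(\noisetwo)$ --- by Theorem~\ref{S Omega Ito}(b), by Remark~($\alpha$), and by the locality of the conjugation and flip (via Lemma~\ref{invariance = affiliation}) --- while $S = S_\cyclic \ot \Gamma(s_\Omega)$ respects the Fock filtration. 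Consequently each such factor $T$ satisfies $T\POmega = \POmega T \POmega$, so the intermediate projections $V_\Omega V_\Omega^* = \POmega$ collapse and, on the relevant domains,
\[
( s_\Omega \ot_\Omega S )(\Sigma \ot_\Omega I_\initFock) P_t^\Omega =
V_\Omega^* (s_\Omega \ot S)(\Sigma \ot I_\initFock)(p_t \ot I_\initFock)\, V_\Omega,
\]
where I have also used the key identity $V_\Omega P_t^\Omega = (p_t \ot I_\initFock) V_\Omega$; this holds because $P_t^\Omega$ restricts the integration variable to $[0,t[$ and $p_t \ot I_\initFock$ preserves the $\Omega$-adapted subspace (cf.\ Corollary~\ref{DP =MD}).

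Writing $p_t = V_t V_t^*$, the next step localizes everything to $[0,t[$. Using the commutations $\Sigma V_t = V_t \Sigma_t$ (from $p_t\Sigma \subset \Sigma p_t$, Remark~($\zeta$)) and $s_\Omega V_t = V_t s_{\Omega_{t)}}$ (from the splitting $s_\Omega = s_{\Omega_{t)}} \oplus s_{\Omega_{[t}}$ used in the preceding lemma, together with the boundedness of $s_{\Omega_{t)}}$ from Proposition~\ref{cor to E-O}(a)), I would rewrite the displayed right-hand side as
\[
V_\Omega^*(V_t \ot I_\initFock)(s_{\Omega_{t)}} \ot S)(\Sigma_t \ot I_\initFock)(V_t^* \ot I_\initFock)\, V_\Omega.
\]

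Now the core input is available: the preceding lemma, after cancellation of the isometric $V_t$, gives the localized inclusion $(s_{\Omega_{t)}} \ot S)(\Sigma_t \ot I_\initFock) \subset (\Sigma_t \ot I_\initFock)(K_t^\pi \ot S)$, coming from $s_{\Omega_{t)}} \Sigma_t = \Sigma_t K_t^\pi$ and Proposition~\ref{cor to E-O}(b). Substituting this and folding the localization back out --- via $K_t^\pi V_t^* = V_t^* K^\pi$ and $K^\pi p_t = p_t K^\pi$, both expressing locality of the real structure --- turns the expression into $V_\Omega^*(\Sigma p_t \ot I_\initFock)(K^\pi \ot S)\, V_\Omega$, which is exactly the unfolding of the right-hand side $(\Sigma \ot_\Omega I_\initFock)(K^\pi \ot_\Omega S) P_t^\Omega$. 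This proves the asserted inclusion. Under Assumption~($\text{e}_+$), $s_\Omega$ is bounded and $\Sigma_t$ is bounded with bounded inverse, so the preceding lemma already yields the localized statement as an \emph{equality} with coinciding domains; every step in the chain then reverses, giving the reverse inclusion and hence equality.

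The main obstacle is not the algebra but the domain bookkeeping: justifying the removal of the intermediate $\POmega$'s and the rearrangements of the unbounded, conjugate-linear operators $s_\Omega$, $S$ and $K^\pi$ requires systematic use of the adjoint-product-inclusion relation and of closedness (Lemma~\ref{Lemma 1}). When $\Sigma$ is unbounded the inclusion is genuinely proper, so the delicate point is the equality case, where it is precisely the local boundedness supplied by ($\text{e}_+$) that forces the two domains to match.
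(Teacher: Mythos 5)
Your proposal is correct and follows essentially the same route as the paper's proof: unfold the $\ot_\Omega$ products through $V_\Omega$, collapse the intermediate projections $P^\Omega$ via Lemma~\ref{lemma: tensor Omega}, localise through $p_t = V_tV_t^*$, invoke the preceding lemma's inclusion $(s_\Omega\ot S)(\Sigma V_t\ot I_\initFock)\subset(\Sigma V_t\ot I_\initFock)(K_t^\pi\ot S)$, fold back out, and obtain equality under ($\text{e}_+$) since that lemma's inclusion becomes an equality and every other link in the chain is already an equality. One inessential slip: under ($\text{e}_+$) only the local piece $s_{\Omega_{t)}}$ (equivalently $V_t^*s_\Omega V_t$) is known to be bounded, not $s_\Omega$ itself, but your argument only ever uses the local boundedness.
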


\begin{proof}
 Let $t\in\Rplus$. In view of the identity
 \[
 (V_t \ot I_\initFock)
 \big( V_t^* K^\pi V_t \ot S \big)
 ( V_t^* \ot I_\initFock ) =
 ( K^\pi \ot S )
 ( V_t V_t^* \ot I_\initFock )
 \]
 (which follows from Part (d) of Proposition~\ref{Lemma 3}),
 applying Lemma~\ref{lemma: tensor Omega},
 first with $T = \Sigma$ and $X = I_\initFock$
 and last with $T = K$ and $X = S$,
 and Part (b) of Proposition~\ref{Lemma 3} again, we have
\begin{align*}
 \text{LHS of }\eqref{L}
 &=
 V_\Omega^*( s_\Omega \ot S )
 V_\Omega V_\Omega^*
 (\Sigma \ot I_\initFock)
 V_\Omega P_t^\Omega
 \\
 &=
  V_\Omega^*( s_\Omega \ot S )
 (\Sigma \ot I_\initFock)
 V_\Omega P_t^\Omega
 \\
 &=
 V_\Omega^*( s_\Omega \ot S )
 (\Sigma \ot I_\initFock)
 (P_t \ot I_\initFock) V_\Omega
 \\
 &=
V_\Omega^*( s_\Omega \ot S )
 (\Sigma V_t \ot I_\initFock)
 (V_t^* \ot I_\initFock) V_\Omega
 \\
 &\subset
 V_\Omega^*
 ( \Sigma V_t \ot I_\initFock )
 ( K_t^\pi \ot S )
  (V_t^* \ot I_\initFock)
 V_\Omega
 \\
 &=
 V_\Omega^*
 ( \Sigma \ot I_\initFock )
 ( V_t \ot I_\initFock )
 ( V_t^* K^\pi V_t \ot S )
  (V_t^* \ot I_\initFock)
 V_\Omega
 \\
 &=
  V_\Omega^*
 ( \Sigma \ot I_\initFock )
 ( K^\pi \ot S )
 (V_t V_t^* \ot I_\initFock)
 V_\Omega
 \\
 &=
 V_\Omega^*
 ( \Sigma \ot I_\initFock )
 ( K^\pi \ot S )
  V_\Omega P_t^\Omega
 =
  \text{RHS of }\eqref{L}
\end{align*}
with equality if assumption ($\text{e}_+$) holds.
\end{proof}

\begin{lemma}
\label{fact}
 The operator $K^\pi \ot S$ on
 $L^2(\Rplus; \noisetwo \ot \initFock) = \Kiltwo \ot \initFock$
 may be characterised as follows\tu{:}
 \begin{align*}
 &\Dom  K^\pi \ot S =
 \big\{
 f \in L^2(\Rplus; \noisetwo \ot \initFock):
 f(t) \in \Dom k^\pi \ot S \text{ for a.a. } t, \text{ and }
 \\
 &\qquad \qquad \qquad \qquad \qquad \qquad \qquad \qquad
 \qquad \qquad
 (k^\pi \ot S)f(\cdot) \in L^2(\Rplus; \noisetwo \ot \initFock)
 \big\}
 \\
 &(K^\pi \ot S)f = (k^\pi \ot S) f(\cdot).
 \end{align*}
\end{lemma}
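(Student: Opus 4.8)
The plan is to recognise $K^\pi\ot S$ as a constant-fibre decomposable operator over $\Rplus$ and to exploit the separability of $\noisetwo\ot\initFock$. First I would record the pointwise structure of the ingredients. By the definition of the conjugation on $\Kil$ (induced pointwise from that on $\noise$) and of the flip $\pi$, the conjugation $K^\pi$ on $\Kiltwo=L^2(\Rplus;\noisetwo)$ is itself pointwise: $(K^\pi f)(t)=k^\pi f(t)$ for a.a.\ $t$. Set $\mathcal{K}:=\noisetwo\ot\initFock$ and $B:=k^\pi\ot S$, a closed conjugate-linear operator on $\mathcal K$ (closed because $k^\pi$ is bounded and $S$ is closed; cf.\ Lemma~\ref{Lemma 1}), and let $T$ denote the operator on $L^2(\Rplus;\mathcal K)=\Kiltwo\ot\initFock$ defined by the right-hand side of the statement. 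Under the associativity identification $\Kiltwo\ot\initFock=L^2(\Rplus)\ot\mathcal K$, a simple tensor $\phi\ot c\ot\eta$ (with $\phi\in L^2(\Rplus)$, $c\in\noisetwo$, $\eta\in\Dom S$) is sent by $K^\pi\ot S$ to the function $t\mapsto\overline{\phi(t)}\,(k^\pi c\ot S\eta)$, which is exactly $Bf(\cdot)$ for $f(t)=\phi(t)(c\ot\eta)$; here the conjugate-linearity of $B$ absorbs the conjugation coming from $K^\pi$. Thus $K^\pi\ot S$ and $T$ agree on the linear span $\mathcal C$ of such simple tensors.

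Next I would check that $T$ is closed: if $f_n\to f$ and $Tf_n\to h$ in $L^2(\Rplus;\mathcal K)$, pass to a subsequence with $f_n(t)\to f(t)$ and $Bf_n(t)\to h(t)$ for a.a.\ $t$; since $B$ is closed this gives $f(t)\in\Dom B$ and $Bf(t)=h(t)$ a.a.\ $t$, i.e.\ $f\in\Dom T$ and $Tf=h$. Since $K^\pi$ is bounded and $\Dom S$ is a core for $S$, the span $\mathcal C$ is a core for $K^\pi\ot S$ (cf.\ Proposition~\ref{Lemma 3}), so agreement on $\mathcal C$ together with closedness of $T$ yields $K^\pi\ot S\subseteq T$.

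For the reverse inclusion I would use the constant-fibre structure. As $\noisetwo$ and $\initFock$ are separable, so are $\mathcal K$ and the graph $\Graph B\subseteq\mathcal K\oplus\mathcal K$, a fixed Hilbert space under the graph norm. Given $f\in\Dom T$, the field $F(t):=(f(t),Bf(t))$ lies in $\Graph B$ for a.a.\ $t$ and satisfies $\int\norm{F(t)}_{\Graph B}^2\,dt=\norm{f}^2+\norm{Tf}^2<\infty$, so $F\in L^2(\Rplus;\Graph B)$. Simple $\Graph B$-valued functions are dense there, and since $\noisetwo\otul\Dom S$ is a core for $B$ (again by the tensor-product core property) one may take these simple functions of the form $t\mapsto\sum_k\ind_{A_k}(t)\zeta_k$ with $\zeta_k\in\noisetwo\otul\Dom S$; they lie in $\mathcal C$, their $T$-images are $\sum_k\ind_{A_k}\ot B\zeta_k$, and convergence in $L^2(\Rplus;\Graph B)$ is precisely graph-norm convergence towards $(f,Tf)$. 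As $K^\pi\ot S$ is closed and agrees with $T$ on $\mathcal C$, it follows that $f\in\Dom(K^\pi\ot S)$ with $(K^\pi\ot S)f=Tf$, giving $T\subseteq K^\pi\ot S$ and hence equality.

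The main obstacle is this last step: showing that the pointwise domain is no larger than $\Dom(K^\pi\ot S)$. The crucial simplification is that the fibre $B=k^\pi\ot S$ is independent of $t$, so $\Graph B$ is a single separable Hilbert space and the measure-theoretic input reduces to the standard density of simple functions in the Bochner space $L^2(\Rplus;\Graph B)$. The conjugate-linearity of every operator in sight must be tracked carefully throughout, which is why I would lean on the appendix facts about tensor products of (possibly conjugate-linear) unbounded operators rather than naively factorising $K^\pi\ot S$ as $(K^\pi\ot I)(I\ot S)$: the second factor, a linear operator tensored with the conjugate-linear $S$, is not even well defined.
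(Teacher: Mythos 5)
Your proof is correct, and its centrepiece coincides with the paper's: the paper defines the same pointwise operator (its $R$, your $T$) and establishes its closedness by precisely your device --- pass to an a.e.-convergent subsequence and invoke closedness of the fibre operator $k^\pi \ot S$ --- then concludes from the chain $K^\pi \otul S \subset R \subset K^\pi \ot S$, since $K^\pi \ot S$ is by definition the closure of $K^\pi \otul S$. Where you genuinely diverge is the inclusion $T \subset K^\pi \ot S$, which the paper merely declares ``easily verified'' (the implicit route would be a duality computation: pair $f \in \Dom T$ against simple tensors from $\Kiltwo \otul \Dom S^*$, a core for $(K^\pi \ot S)^* = K^\pi \ot S^*$, and conclude $f \in \Dom\big((K^\pi \ot S^*)^*\big) = \Dom K^\pi \ot S$ with the correct value). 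You instead prove it constructively, embedding $(f, Tf)$ in the Bochner space $L^2\big(\Rplus; \Graph(k^\pi \ot S)\big)$ and approximating by step functions with values in the graph-dense set $\big\{ \big(\zeta, (k^\pi \ot S)\zeta\big) : \zeta \in \noisetwo \otul \Dom S \big\}$; this costs the separability observation and some graph-space bookkeeping, but buys the stronger explicit conclusion that $\mathcal{C}$-valued step functions are graph-dense in $\Dom K^\pi \ot S$, and your tracking of conjugate-linearity throughout --- real-valued indicators neutralising the conjugation, and the correct observation that the factorisation $(K^\pi \ot I)(I \ot S)$ is ruled out by the paper's Caution --- is exactly right. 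Two cosmetic points: closedness of $k^\pi \ot S$ needs no appeal to Lemma~\ref{Lemma 1}, since by the appendix convention $k^\pi \ot S$ denotes the closure of $k^\pi \otul S$ and is closed by definition; and your subsequence argument proves full closedness of $T$, where the paper runs the same argument only over sequences from $\Kiltwo \otul \Dom S$ --- either version suffices for the conclusion.
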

 \begin{proof}
 Call the operator defined above $R$.
 The inclusions
 $K^\pi \otul S \subset R \subset K^\pi \ot S$ are easily verified,
 it therefore suffices to show that $R$ is closed.
 Letting $(f_n)$ be a sequence in $\Kiltwo \otul \Dom S$ satisfying
 $f_n \to f$ and $R f_n \to g$, we may pass to a subsequence and
 assume that the convergence is almost everywhere.
 Then, for almost all $t\in \Rplus$,
 \[
 f(t) = \lim f_n(t) \text{ and }
 g(t) = \lim (Rf_n)(t) = \lim (k^\pi \ot S)f_n(t),
 \]
 and so, since $k^\pi \ot S$ is closed,
 $f(t) \in \Dom k^\pi \ot S$ and $(k^\pi \ot S) f(t) = g(t)$.
 Since $g$ is square-integrable, it follows that
 $f\in \Dom R$ and $R f = g$. Thus $R$ is closed, as required.
 \end{proof}

 Define the following modified It\^o integral:
 \begin{equation}
 \label{ItoSigma defined}
 \ItoSigma :=
 \Ito \circ (\Sigma \ot_\Omega I_\initFock),
 \end{equation}
 and set
 $\ItoSigma_t  :=
 \Ito \circ (\Sigma \ot_\Omega I_\initFock) P^\Omega_t$
 ($t\in\Rplus$).

 \begin{rem}
 Under Assumption ($\text{e}_+$), the integral
 $\ItoSigma_t$ is bounded and has full domain
 $L^2_\Omega(\Rplus; \noisetwo \ot \initFock)$,
 for all $t\in \Rplus$.
 Without Assumption ($\text{e}_+$)
 the domains may be smaller.
 Accordingly, let
 $\Domloc \Sigma \ot_\Omega I_\initFock$
 denote
 the set of (measure equivalence classes of)
 functions $z : \Rplus \to \noisetwo \ot \initFock$
 such that, for all $t \in \Rplus$,
 \begin{equation}
 \label{Domloc defined}
 z_t \in \noisetwo \ot \initFock_t
 \text{ and }
 z_{[0,t]} \in \Dom \Sigma \ot_\Omega I_\initFock.
 \end{equation}
 \end{rem}

 \begin{propn}
 \label{ItoSigma isometry}
 Let $z \in \Domloc \Sigma \ot_\Omega I_\initFock$.
 Then,
 for all $t\in \Rplus$,
 \[
 \norm{\ItoSigma_t z}^2
  =
  \norm{(\Sigma \ot I_\initFock)z_{[0,t]}}^2
 \]
 and $\ItoSigma_t z = 0$ if and only if $z_{[0,t]} = 0$.
 \end{propn}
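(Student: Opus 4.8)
The plan is to reduce everything to the isometry property of the abstract It\^o integral $\Ito$ established in Section~\ref{section: Ito commutation relations}. Since $\ItoSigma_t = \Ito\circ(\Sigma\ot_\Omega I_\initFock)P^\Omega_t$ and $\Ito$ is isometric on $L^2_\Omega(\Rplus;\noisetwo\ot\initFock)$, we immediately obtain
\[
\norm{\ItoSigma_t z}^2 = \norm{(\Sigma\ot_\Omega I_\initFock)P^\Omega_t z}^2 ,
\]
so the whole proposition follows once the integrand on the right is identified with $(\Sigma\ot_\Omega I_\initFock)z_{[0,t]}$ and its norm is transferred to the ambient space.

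First I would show that $P^\Omega_t z = z_{[0,t]}$ for $z\in\Domloc\Sigma\ot_\Omega I_\initFock$. Membership in $\Domloc$ gives local adaptedness, $z_s\in\noisetwo\ot\initFock_s$ for almost all $s$, so for $s\le t$ the Fock component of $z_s$ already lies in $\initFock_t$; hence the truncation $z_{[0,t]}$ sits in $L^2_\Omega(\Rplus;\noisetwo\ot\initFock)\cap(\Kiltwo\ot\initFock_t)$ and is fixed by the Fock projection $P_t$. This is the content of the natural extension of $P^\Omega_t$ to $L^2_{\Omega,\loc}$, and it is the step that most needs care: one must verify that the extension leaves no `future' contribution, which is precisely where local adaptedness (as opposed to mere integrability) is used, and where the relation $DP_t = M_t^\Omega D$ of Corollary~\ref{DP =MD} (equivalently $P_t\Ito = \Ito M_t^\Omega$, with $M_t^\Omega z = z_{[0,t]}$) pins down the identification.

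With $P^\Omega_t z = z_{[0,t]}$ in hand, it remains to pass from the compressed operator $\Sigma\ot_\Omega I_\initFock = V_\Omega^*(\Sigma\ot I_\initFock)V_\Omega$ to $\Sigma\ot I_\initFock$ itself. Here I would invoke Lemma~\ref{lemma: tensor Omega}: since $\Sigma$ is affiliated to $L^\infty(\Rplus)\otol B(\noisetwo)$ (Remark~($\alpha$)) and $I_\initFock$ respects the Fock filtration, $(\Sigma\ot I_\initFock)$ carries the adapted subspace into itself, so $(\Sigma\ot I_\initFock)V_\Omega z_{[0,t]}$ already lies in $\Ran V_\Omega$ and the isometry $V_\Omega$ preserves its norm; thus $\norm{(\Sigma\ot_\Omega I_\initFock)z_{[0,t]}} = \norm{(\Sigma\ot I_\initFock)z_{[0,t]}}$, which is the claimed identity. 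One may equally use the commutation of $\Sigma$ with the time truncation, namely $(\Sigma\ot_\Omega I_\initFock)z_{[0,t]} = \big((\Sigma\ot_\Omega I_\initFock)z\big)_{[0,t]}$, which follows from $p_t\Sigma\subset\Sigma p_t$ (Remark~($\zeta$)).

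Finally, the zero-set statement is immediate from the isometry: $\ItoSigma_t z = 0$ iff $(\Sigma\ot I_\initFock)z_{[0,t]} = 0$, and since $\Sigma$ is injective (Remark~($\gamma$)), so is $\Sigma\ot I_\initFock$, whence $z_{[0,t]} = 0$; the converse is trivial. The main obstacle throughout is the bookkeeping in the first step: making precise the extended operator $P^\Omega_t$ on $L^2_{\Omega,\loc}$ and checking that it genuinely truncates in time rather than leaving a Fock-filtered tail, for which the local adaptedness built into $\Domloc$ is essential.
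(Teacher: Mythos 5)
Your proof is correct and follows the paper's route in outline: the paper likewise disposes of the norm identity with the single phrase ``follows immediately from It\^o isometry'', leaving implicit exactly the bookkeeping you spell out (that $P^\Omega_t$ acts on $\Domloc \Sigma\ot_\Omega I_\initFock$ as time truncation $z \mapsto z_{[0,t]}$ --- your care here is warranted, since the compressed Fock projection alone does not truncate the tail, and your appeal to Corollary~\ref{DP =MD} and local adaptedness is the right way to pin this down --- and that Lemma~\ref{lemma: tensor Omega} lets the compression $V_\Omega^*(\Sigma\ot I_\initFock)V_\Omega$ be replaced by $\Sigma\ot I_\initFock$ without loss of norm). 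The one genuine divergence is in the kernel statement. The paper first localises, writing $\norm{\ItoSigma_t z} = \norm{\Sigma_t V_t^* z_{[0,t]}}$, and then invokes injectivity of $\Sigma_t$; since Assumption (d) makes $\Sigma_t$ boundedly invertible, $\Sigma_t$ is bounded below and its ampliation is trivially injective. You instead use injectivity of the global operator $\Sigma$ (Remark~($\gamma$)) and assert that $\Sigma\ot I_\initFock$ inherits it. That assertion is true but not automatic for unbounded operators and deserves a line: since $\Sigma$ is closed, densely defined and injective, $\Ran \Sigma^*$ is dense, so $\Ran(\Sigma^*\ot I_\initFock) \supset \Ran \Sigma^* \otul \initFock$ is dense, and by~\eqref{operator tensor} one has $\Ker(\Sigma\ot I_\initFock) = \Ran\big((\Sigma\ot I_\initFock)^*\big)^\perp = \Ran(\Sigma^*\ot I_\initFock)^\perp = \{0\}$. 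With that supplement your argument is complete; the paper's localisation buys exactly the avoidance of this closed-operator tensor fact, at the cost of the (mild) extra step identifying $\norm{(\Sigma\ot I_\initFock)z_{[0,t]}}$ with $\norm{\Sigma_t V_t^* z_{[0,t]}}$ via $\Sigma V_t = V_t \Sigma_t$.
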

 \begin{proof}
 The first part follows immediately from It\^o isometry.
 For the second part, note that we also have
 $
 \norm{\ItoSigma_t z}
 =
 \norm{\Sigma_t V_t^* z_{[0,t]}}$,
 and so the result follows from the injectivity of $\Sigma_t$
 and the fact that $V_t^*$ is isometric on $\initFock_t$.
 \end{proof}

 By a vector martingale in $\initFock$ we mean a family
 $(x_t)_{t\geq 0}$ in $\initFock$ satisfying $P_s x_t = x_s$
 for all $0\leq s \leq t$.

\begin{thm}
\label{X}
 Let $x$ be a vector martingale in $\initFock$.
 Then the following hold.
 \begin{alist}
 \item
  There is a unique
 $z\in \Domloc \Sigma \ot_\Omega I_\initFock$
 such that
 \[
 x_t - x_0 = \ItoSigma_t z
 \qquad
 (t\in\Rplus)
 \]
 \item
 The following are equivalent\tu{:}
 \begin{rlist}
 \item
 $x$ is $\Dom S$-valued.
 \item
 $x_0  \in \Dom S$ and
 $z_{[0,t]} \in \Dom  \Sigma K^\pi \ot_\Omega S$
 for all $t\in \Rplus$.
 \end{rlist}
 When these hold,
 \[
 \big(
 ( K^\pi \ot_\Omega S ) z_{[0,t]}
 \big)(s)
 =
 ( k^\pi \ot S ) z_s
 \qquad
 \text{ for a.a. } s \in [0,t],
 \]
 and, for all $t \in \Rplus$,
\begin{equation}
 \label{Sxt}
 S x_t - S x_0 = \ItoSigma_t \big( (k^\pi \ot S) z_\cdot \big) =
 \ItoSigma \big( ( K^\pi \ot_\Omega S ) z_{[0,t]} \big).
 \end{equation}
 \item If also \tu{(}$\text{e}_+$\tu{)} holds then we have the following further equivalences\tu{:}
 \begin{rlist}
 \item[(iii)]
 $x_0  \in \Dom S$ and
 $z_{[0,t]} \in \Dom  K^\pi \ot_\Omega S$
 for all $t\in \Rplus$.
 \item[(iv)]
 $x_0  \in \Dom S$,
 $z$ is almost everywhere
 $\Dom ( k^\pi \ot S )$-valued,
 and the function
 $s \mapsto ( k^\pi \ot S ) z_s$ is
 locally square-integrable.
 \end{rlist}
\end{alist}
\end{thm}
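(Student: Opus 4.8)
I would prove the three parts in turn. For (a), I would first observe that, since $\Ito$ is an isometry onto the orthogonal complement of the time-zero space $\init\ot\Comp\Omega=\Ker D$ and $x_0=P_0x_t\in\init\ot\Comp\Omega$, each increment $x_t-x_0=(I-P_0)x_t$ lies in $\Ran\Ito$; setting $y^{(t)}:=D(x_t-x_0)=Dx_t$ recovers it as $x_t-x_0=\Ito y^{(t)}$. The martingale property $P_sx_t=x_s$ together with Corollary~\ref{DP =MD}, in the form $DP_s=M_s^\Omega D$, gives $y^{(s)}=M_s^\Omega y^{(t)}=(y^{(t)})_{[0,s[}$ for $s\le t$, so the family $(y^{(t)})_t$ is consistent and assembles into a single $y\in\LtwoOmegaloc(\Rplus;\noisetwo\ot\initFock)$. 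Since Assumption~(d) makes each $\Sigma_t$ boundedly invertible while $\Sigma$ is affiliated to $L^\infty(\Rplus)\otol B(\noisetwo)$, the equation $(\Sigma\ot_\Omega I_\initFock)z_{[0,t]}=y_{[0,t[}$ can be solved locally and consistently to produce $z\in\Domloc\Sigma\ot_\Omega I_\initFock$ with $x_t-x_0=\ItoSigma_t z$. Uniqueness is immediate from Proposition~\ref{ItoSigma isometry}: two solutions differ by a $z'$ with $\ItoSigma_t z'=0$ for all $t$, forcing $z'_{[0,t]}=0$ and hence $z'=0$.

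For (b) the engine is the second-quantisation/It\^o commutation relation. Writing $S=S_\cyclic\ot S_\Omega=S_\cyclic\ot\Gamma(s_\Omega)$ (Theorem~\ref{E-O theorem}(h)) and noting that $s_\Omega$ is affiliated to $L^\infty(\Rplus)\otol B(\noisetwo)$ (Theorem~\ref{S Omega Ito}(b)), I would apply Theorem~\ref{Ito second quantisation commutation} with the conjugate-linear data $A=S_\cyclic$, $T=s_\Omega$ (as sanctioned by the remark following that theorem) to obtain the operator \emph{equality} $S\Ito=\Ito\circ(s_\Omega\ot_\Omega S)$. Because this is an equality and $\Ito$ is everywhere defined, for $w=x_t-x_0=\Ito\big((\Sigma\ot_\Omega I_\initFock)z_{[0,t]}\big)$ one has $w\in\Dom S$ precisely when $z_{[0,t]}\in\Dom\big((s_\Omega\ot_\Omega S)(\Sigma\ot_\Omega I_\initFock)\big)$; via the product rule for $\ot_\Omega$ (Proposition~\ref{Lemma 3}) and the local identity $s_\Omega\Sigma=\Sigma K^\pi$ on $[0,t]$-supported functions (as established en route to Lemma~\ref{tedious}), this composite is exactly the operator $\Sigma K^\pi\ot_\Omega S$ appearing in (ii). Since $x_0$ is itself a value of the martingale and $\Dom S$ is a subspace, $x$ is $\Dom S$-valued iff $x_0\in\Dom S$ and every increment lies in $\Dom S$, giving (i)$\Leftrightarrow$(ii). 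For the formula, Lemma~\ref{tedious} rewrites the defined vector as $Sw=\Ito(s_\Omega\ot_\Omega S)(\Sigma\ot_\Omega I_\initFock)z_{[0,t]}=\Ito(\Sigma\ot_\Omega I_\initFock)(K^\pi\ot_\Omega S)z_{[0,t]}=\ItoSigma\big((K^\pi\ot_\Omega S)z_{[0,t]}\big)$, and the pointwise description of $(K^\pi\ot_\Omega S)z_{[0,t]}$ as $s\mapsto(k^\pi\ot S)z_s$ follows from Lemma~\ref{fact} restricted to the $\Omega$-adapted subspace, yielding~\eqref{Sxt}.

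For (c), Assumption ($\text{e}_+$) renders each $\Sigma_t$ bounded with bounded inverse and upgrades Lemma~\ref{tedious} to an equality, so the factor $\Sigma\ot_\Omega I_\initFock$ no longer constrains the domain: $z_{[0,t]}\in\Dom(\Sigma K^\pi\ot_\Omega S)$ then holds iff $z_{[0,t]}\in\Dom(K^\pi\ot_\Omega S)$, which is (ii)$\Leftrightarrow$(iii). The remaining equivalence (iii)$\Leftrightarrow$(iv) is a local reading of Lemma~\ref{fact}: requiring $z_{[0,t]}\in\Dom(K^\pi\ot_\Omega S)$ for every $t$ says exactly that $z$ is almost everywhere $\Dom(k^\pi\ot S)$-valued and that $s\mapsto(k^\pi\ot S)z_s$ is locally square-integrable.

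The main obstacle I expect is the domain bookkeeping in (b) \emph{without} Assumption ($\text{e}_+$), where $\Sigma$, $s_\Omega$ and $S$ are genuinely unbounded and Lemma~\ref{tedious} supplies only an inclusion. The delicate point is to extract the equivalence from the operator \emph{equality} of Theorem~\ref{Ito second quantisation commutation} (which pins down $\Dom\big((s_\Omega\ot_\Omega S)(\Sigma\ot_\Omega I_\initFock)\big)$ exactly), using the inclusion of Lemma~\ref{tedious} only to rewrite an already-defined vector $Sw$. One must keep straight that the operator named $\Sigma K^\pi\ot_\Omega S$ is precisely the composite $(s_\Omega\ot_\Omega S)(\Sigma\ot_\Omega I_\initFock)$, that $s_\Omega\Sigma$ and $\Sigma K^\pi$ coincide as closed operators on $[0,t]$-supported adapted functions, and that the conjugate-linearity of $S_\cyclic$, $s_\Omega$ and $S$ is handled consistently throughout.
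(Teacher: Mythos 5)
Parts (a) and (c) of your argument are sound and essentially the paper's: in (a) you in effect re-prove the abstract Kunita--Watanabe theorem (which the paper simply cites) via $D$, the martingale property and Corollary~\ref{DP =MD}, before inverting $\Sigma$ locally using Assumption (d) and Lemma~\ref{lemma: tensor Omega}, with uniqueness from Proposition~\ref{ItoSigma isometry}; in (c) you match the paper's use of the equality case of Lemma~\ref{tedious} and the localisation of Lemma~\ref{fact}. The forward direction (i)$\Rightarrow$(ii) of (b), together with the identity \eqref{Sxt}, is also correct and follows the paper's route: the ampliated commutation equality $S\,\Ito = \Ito\circ(s_\Omega\ot_\Omega S)$ from Theorems~\ref{Ito second quantisation commutation} and~\ref{S Omega Ito}(c), then the inclusion of Lemma~\ref{tedious} applied to the already-defined vector $S(x_t-x_0)$.

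The genuine gap is in (ii)$\Rightarrow$(i). Your argument rests on the claim that the composite $(s_\Omega\ot_\Omega S)(\Sigma\ot_\Omega I_\initFock)$, localised to $[0,t]$, \emph{is exactly} $\Sigma K^\pi\ot_\Omega S$; but Proposition~\ref{cor to E-O}(b)(ii) gives only the inclusion $(s_\Omega\ot S)(\Sigma\ot I)\subset s_\Omega\Sigma\ot S=\Sigma K^\pi\ot S$, the second operator being the \emph{closure} of the first. Since Assumption ($\text{e}_+$) is not assumed in part (b), $\Sigma_t$ and $S$ are both genuinely unbounded, and the closure can strictly enlarge the domain: a vector of $\Dom(\Sigma_t K^\pi_t\ot S)$, even one also lying in $\Dom(\Sigma_t\ot I)$, need not a priori be mapped by $\Sigma_t\ot I$ into $\Dom(s_\Omega\ot S)$. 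So from (ii) you cannot yet conclude $z_{[0,t]}\in\Dom S\,\ItoSigma$, which is what the operator equality $S\,\Ito=\Ito\circ(s_\Omega\ot_\Omega S)$ requires. The missing ingredient is Proposition~\ref{cor to E-O}(c): under the refinement hypothesis \eqref{refinement} --- supplied here by Assumption (e), which is in force in this section and which your proof never invokes --- one has $\Dom\,(s_\Omega\ot S)(\Sigma\ot I)=\Dom s_\Omega\Sigma\ot S\,\cap\,\Dom\Sigma\ot I$. Combining this with the fact, automatic from part (a), that $z_{[0,t]}\in\Dom\Sigma\ot_\Omega I_\initFock$, the paper passes from (ii) to membership in the composite's domain and thence, by Theorem~\ref{Ito second quantisation commutation}, to (i). Your closing paragraph correctly identifies this as the delicate point, but resolves it by assertion (``is precisely the composite'') rather than by the duality argument underlying Proposition~\ref{cor to E-O}(c), which is exactly where Assumption (e) earns its keep; as written, this step would fail in the general unbounded setting the theorem is meant to cover.
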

\begin{proof}
(a)
 Uniqueness follows from Proposition~\ref{ItoSigma isometry}.
 By the abstract Kunita-Watanabe Theorem (see~\cite{LQSI}),
 there is
 $y \in L^2_{\Omega, \loc}(\Rplus; \noisetwo\ot\initFock)$
 such that
 $x_t - x_0 = \Ito_t y$ ($t\in\Rplus$).
 Letting
 $z \in \Domloc \Sigma \ot_\Omega I_{\initFock}$
 be the process defined by
 \[
 z_{[0,t]} =
 \big( \Sigma^{-1} \ot_\Omega I_\initFock \big) y_{[0,t]},
 \]
 we have $x_t - x_0 = \ItoSigma_t z$ ($t\in\Rplus$).

 (b)
 By Lemma~\ref{tedious} we have
 \[
 ( s_\Omega \ot_\Omega S )
 (\Sigma \ot_\Omega I_\initFock)
 P_t^\Omega \subset
 ( \Sigma \ot_\Omega I_\initFock )
 ( K^\pi \ot_\Omega S \big)
 P_t^\Omega
 \qquad
 (t \in \Rplus).
 \]
 Therefore, by Part (c) of Theorem~\ref{S Omega Ito}
 (which happily ampliates to the current setting),
 \begin{align}
 S \Ito \circ
 (\Sigma \ot_\Omega I_\initFock) P_t^\Omega
 &=
 \Ito \circ
 (s_\Omega \ot_\Omega S)
 (\Sigma \ot_\Omega I_\initFock) P_t^\Omega
 \nonumber \\
 &\subset
 \Ito \circ ( \Sigma \ot_\Omega I_\initFock )
 ( K^\pi \ot_\Omega S )
 P_t^\Omega
 \label{JustAnotherItoFormula} \end{align}
 so $S \ItoSigma \circ  P_t^\Omega \subset
 \ItoSigma \circ
 ( K^\pi \ot_\Omega S )
 P_t^\Omega$,
  for all $t\in\Rplus$.
 This gives (i) $\Rightarrow$ (ii) and, when (i) holds, identity\eqref{Sxt}.
 Conversely, if (ii) holds then $(V_t^*\otimes I_\initFock)z_{[0,t]}$
 is in $\Dom \Sigma_tK^\pi_t\otimes S \cap \Dom \Sigma_t\otimes I$
 so, by Proposition \ref{cor to E-O} (c),
 $(V_t^*\otimes I_\initFock)z_{[0,t]}\in
 \Dom (s_{\Omega_{t)}}\otimes S)(\Sigma_t\otimes I_\initFock)$
 and it follows from Theorem \ref{Ito second quantisation commutation}
 that $z_{[0,t]}\in \Dom S\ItoSigma $, so (i) holds.

 (c) Now assume that \tu{(}$\text{e}_+$\tu{)} holds.
 Lemma~\ref{tedious} yields equality in (\ref{JustAnotherItoFormula}),
 so (i) is equivalent to (iii).
 The equivalence of (iii) and (iv) follows from
  Lemma~\ref{fact}.
\end{proof}

\section{Quasifree processes, martingales and integrals}
 \label{section: qf integrals}

 For this section the setup is the same as in
 Section~\ref{section: modified Ito},
 and we write $\Xi$ for the domain $\InitNoise' \xi$,
 as in Sections~\ref{section: affiliated operators}
 and~\ref{section: transpose and conjugate}.
 Quasifree martingales and stochastic integrals are defined
 and the martingale representation theorem is established.

 We rely heavily on the
 vector-operator
 linear isomorphisms~\eqref{eqn: opdagger dom S}
 and~\eqref{eqn: op vector},
 and on  the transpose operation on unbounded operators
 treated in Section~\ref{section: transpose and conjugate}.
 Filtrations of
 $\Op_\InitNoise(\Xi; \initFock)$ and
 $\Opdagger_\InitNoise(\Xi)$,
 and conditional expectations,
 are defined by
 \begin{align*}
 &\Op_\InitNoise(\Xi; \initFock)_t :=
 \big\{
 T \in \Op_\InitNoise(\Xi; \initFock): \,
 T \xi \in \initFock_t
 \big\},
 \\
 &
 \Opdagger_\InitNoise(\Xi)_t :=
 \Op_\InitNoise(\Xi; \initFock)_t \cap \Opdagger_\InitNoise(\Xi),
 \text{ and }
 \\
 &
 \mathbb{E}^\Sigma_t: \Op_\InitNoise(\Xi; \initFock) \to \Op_\InitNoise(\Xi; \initFock),
 \quad
 \mathbb{E}^\Sigma_t[T] \xi = P_t T \xi
 \qquad
 (t\in\Rplus).
  \end{align*}
Thus
 $\Op_\InitNoise(\Xi; \initFock)_t = \Ran \mathbb{E}^\Sigma_t$ and
 $\mathbb{E}^\Sigma_t\big[ \Opdagger_\InitNoise(\Xi) \big] =
 \Opdagger_\InitNoise(\Xi)_t$ ($t\in\Rplus$).
 A \emph{quasifree process} is a family
 $X = (X_t)_{t\geq 0}$ in $\Op_\InitNoise(\Xi; \initFock)$ adapted to
 the above filtration; it is a \emph{quasifree martingale} if
 it satisfies
 \[
 \mathbb{E}^\Sigma_s [X_t] = X_s
 \qquad
 (s\leq t),
 \]
 equivalently,
 $(X_t \xi)_{t\geq 0}$
 is a vector martingale
 with respect to the filtration
 $\big(\initFock_t\big)_{t\geq 0}$ (cf.~\cite{LQSI}).
 Thus, for example,
 if $T \in \Op_\InitNoise(\Xi; \initFock)$ then
 $\big(\mathbb{E}^\Sigma_t [T]\big)_{t\geq 0}$ is a
 martingale;
 these are called
 \emph{closed martingales}.

 \begin{rem}
 The maps $\mathbb{E}^\Sigma_t$ induce conditional expectations
 in the standard sense of Umegaki (norm-one projections)
 from $\InitNoise$
 to $\InitNoise_t := \Init \otol \Noise_t$
 which leave the vector state $\omega_\xi$ invariant.
 Here $\Noise_t := \Weyl_\Sigmao(\subspace_t)''$
 In general, due to Takesaki's No Go Theorem,
  the existence of such conditional expectations
 is not guaranteed;
 it rests on the subalgebras being left invariant
 by the modular automorphism group associated with
 $(\InitNoise, \xi)$
 (\cite{takesakiCE}; see Theorem IX.4.2 of~\cite{TakesakiTwo}).
 \end{rem}
 Write
 $\ProcSigma\kAu$ and $\MartSigma\kAu$ for the collection of
 quasifree processes, respectively martingales, and set
 \begin{align*}
 &\ProcSigmadagger \kAu :=
 \big\{
 X \in \ProcSigma\kAu:
 X_t \in \Opdagger_\InitNoise(\Xi)
 \text{ for all } t \in \Rplus
 \big\}, \text{ and }
 \\
 &\MartSigmadagger\kAu :=
 \MartSigma\kAu \cap \ProcSigmadagger \kAu,
 \end{align*}
 referring to such processes and martingales as \emph{adjointable}.
 We are ready to define quasifree stochastic integrals.
 Recall Corollary~\ref{Corollary 3.3}.

 \begin{defn}
 A \emph{quasifree integrand} is a
 family
 $F = \big(F_t)_{t\geq 0}$ in
 $\umatrix_{\khat}(\InitNoise, \xi)_0$
 such that
 \begin{equation}
 \label{QF integrand}
 \colFcdot \xi \in
 \Domloc \Sigma\ot_\Omega I_\initFock.
 \end{equation}
 Write $\IntegSigma\kAu$ for the collection of these,
 and $\IntegSigmadagger\kAu$ for the subcollection of
 \emph{adjointable integrands}, that is
 those for which
 \[
 F_t \in \umatrixdagger_{\khat}(\InitNoise, \xi)_0
 \text{ for all } t \in \Rplus
 \text{ and }
 F^\dagger := \big( F^\dagger_t\big)_{t\geq 0}
 \in
 \IntegSigma\kAu.
 \]
 For $F \in \IntegSigma\kAu$ define
 $\QSSigma_t(F) \in \Op_{\InitNoise}(\Xi; \initFock)$ by
 \[
 \QSSigma_t(F) \xi =
 \ItoSigma_t \big( \colFcdot \xi \big)
 \qquad (t\in\Rplus).
 \]
 \end{defn}
\begin{rems}
 (i)
 By Lemma~\ref{ultrastrong lemma},
 the operators of an adjointable quasifree process have
 common core $\Xi_0:= \Init' \cyclic \otul \Weyl_{\Sigmaoprime} \Omega$;
 those of an adjointable quasifree integrand have common core
 $\khat \otul \Xi_0$.

 (ii)
 The explicit action of quasifree integrals
 on vectors from the dense subspace
 $\Xi_0$
 is given by a
 Hitsuda-Skorohod integral  (\cite{LM});
 it is obtained from commution relations between
 Weyl operators and such integrals
 (\cite{LQSI}).

 (iii)
 For $F \in \IntegSigma\kAu$
 with block matrix form
 $\left[\begin{smallmatrix}
 0 & M \\ L & 0
 \end{smallmatrix}\right]$,
 $\colF$ is the family
 $\left[\begin{smallmatrix}
 L_\cdot \\ M_\cdot^\Transpose
 \end{smallmatrix}\right]$
 in $\ucol_{\noisetwo}(\Mil,\xi) =
 \Op_\Mil(\Xi; \noisetwo\ot\initFock)$,
 and if
 $F \in \IntegSigmadagger\kAu$ then
 $\colFt \in
 \ucoldagger_{\noisetwo}(\Mil, \xi) \subset
 \Opdagger_\Mil(\Xi, \noisetwo \otul \Xi)$
 and
 $\colFdagger =
 \left[\begin{smallmatrix}
 M_\cdot^\dagger \\ L_\cdot^{\Transpose \dagger}
 \end{smallmatrix}\right]$.

 (iv)
 The top left zero in the block matrix form of
 $F$ is available for a time-integral.
 We have no need for these here,
 but they arise naturally in~\cite{LM}.

 (v)
 The bottom right zero is related to the fact that there is no
 number/exchange/guage process affiliated to the quasifree
 filtration.

  (vi)
 From Proposition~\ref{ItoSigma isometry} we have
 a form of \emph{It\^o isometry} (cf.~\cite{BSWtwo}):
 \[
 \norm{\QSSigma_t(F) \xi}^2 =
 \norm{\Sigma_t V_t^* z_{[0,t]}}^2,
 \quad
 \text{for all }t \in \Rplus,~\text{where } z := \colFcdot \xi.
 \]

 (vii)
 Quasifree creation and annihilation integrals are defined by
 \[
 A^*_t(L) +  A_t(M) =
  \QSSigma_t (F)
 \text{ where }
 F = \left[\begin{smallmatrix} & M \\ L & \end{smallmatrix}\right].
 \]
 The proposition below confirms that, for adjointable $L$,
 $A^*_t(L)^\dagger = A_t(L^\dagger)$.

 (viii)
 When $\subspace$ is a complex subspace of $\Kil$,
 as in the case of squeezed states,
 quasifree creation and annihilation \emph{operators}
 (may be formed, and)
 may be
 viewed as quasifree \emph{Wiener integrals}:
 \begin{equation*}
 \big( a^*(f) + a(g) \big) =
 \QSSigma ( H ),
 \ \text{ so } \
 \big( a^*(f) + a(g) \big) \xi =
 \ItoSigma \big( h \ot \xi \big)
  \qquad
 (f,g\in \Kil),
 \end{equation*}
 where
 \begin{equation*}
 H = \begin{bmatrix}
 & \bra{g} \ot I_\initFock \\
 \ket{f} \ot I_\initFock &
 \end{bmatrix}
 \text{ and }
 h = \binom{f}{\ol{g}}.
 \end{equation*}

 (ix)
 In the guage-invariant case we have orthogonality of
 creation and annihilation integrals
 on the cyclic and separating vector,
 entailing some simplification in the analysis for that case:
 \[
 A^*_t(L) \xi \perp A_t(M) \xi
 \qquad (t\in\Rplus).
 \]

 (x)
 Under ($\text{e}_+$), the condition of adjointability for
 $F \in \IntegSigma\kAu$
 is equivalent to
 \[
  (F^{[]}_\cdot\xi)_{[0,t]}\in\Dom K\otimes_\Omega S\quad \text{for all }t\in\Rplus,
 \]
 which is in turn equivalent to
 \begin{align*}
  & F_t\in \umatrixdagger_{\khat}(\InitNoise, \xi)_0\quad\text{for a.a. }t\in\Rplus,\text{ and} \\
  & (k\otimes S)F^{[]}_\cdot \xi \quad \text{is locally square integrable}.
 \end{align*}
 \end{rems}

 \begin{example}[Exponential martingales]
 Elementary examples of \emph{bounded} quasifree martingales
 are given by
 \[
 E^f_t =
 e^{\frac{1}{2} \norm{\Sigma \iota (f_{[0,t]})}^2} W(f_{[0,t]})
 \qquad
 (t\in\Rplus),
 \]
 where $f \in L^2_{\loc}(\Rplus; \noise)$ is such that
 $\iota(f) \in \Domloc \Sigma$ (so $\Init = \Comp$ here).
 These martingales are adjointable, with $(E^f)^\dagger = E^{-f}$, and
 have the following
 stochastic integral representation:
 \[
 E^f = I_\Fock + \QSSigma_\cdot(F)
 \ \text{ where } \
 F_t = i
  \left[\begin{smallmatrix}
   & \bra{f(t)} \\ \ket{f(t)} &
   \end{smallmatrix}\right]
 \ot E^f_t
 \qquad
 (t\in\Rplus).
 \]
 In other words, they satisfy the basic quasifree stochastic differential
 equation
 \[
 dE^f_t = E^f_t d X^f_t
 \quad
 E^f_0 = I_\Fock,
 \]
 where
 $X^f$ is the martingale
 formed from the field operators
 $\big( i R(f_{[0,t]}) \big)_{t\in\Rplus}$;
 $E^f$ is said to be the \emph{stochastic exponential} of $X^f$.
\end{example}

 \begin{propn}
 \label{integrals are martingales}
 Let $F \in \IntegSigma\kAu$.
 Then
 $\QSSigma_\cdot(F) \in \MartSigma\kAu$.
 \end{propn}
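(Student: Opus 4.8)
The plan is to work throughout with the vector process $x_t := \QSSigma_t(F)\xi$ and to exploit the equivalence recorded just after the definition of quasifree martingale, namely that $\QSSigma_\cdot(F)$ lies in $\MartSigma\kAu$ precisely when $(x_t)_{t\geq0}$ is a vector martingale for the filtration $(\initFock_t)_{t\geq0}$. Setting $z := \colFcdot\xi$, so that $z\in\Domloc\Sigma\ot_\Omega I_\initFock$, the definition of the modified integral gives $x_t = \ItoSigma_t z = \Ito\big(\Sigma\ot_\Omega I_\initFock\big)z_{[0,t]}\in\Ran\Ito\subset\initFock$; thus each $\QSSigma_t(F)$ is genuinely an element of $\Op_\InitNoise(\Xi;\initFock)$, determined through the vector--operator correspondence~\eqref{eqn: op vector}. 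It therefore suffices to prove that $P_s x_t = x_s$ for all $0\leq s\leq t$.

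The engine of the argument is the commutation relation $D P_t = M_t^\Omega D$ of Corollary~\ref{DP =MD}, which ampliates to the present setting and, on taking adjoints (noting that $M_s^\Omega = p_s\ot_\Omega I$ is self-adjoint), reads $P_s\,\Ito = \Ito\,M_s^\Omega$. I would then compute, for $0\leq s\leq t$,
\[
P_s x_t
= \Ito\,M_s^\Omega\big(\Sigma\ot_\Omega I_\initFock\big)z_{[0,t]}
= \Ito\big(\Sigma\ot_\Omega I_\initFock\big)M_s^\Omega z_{[0,t]}
= \Ito\big(\Sigma\ot_\Omega I_\initFock\big)z_{[0,s]}
= x_s ,
\]
the first equality using the definition of $x_t$ together with $P_s\,\Ito = \Ito\,M_s^\Omega$, the second being the one genuine point requiring justification, and the last two using $M_s^\Omega z_{[0,t]} = z_{[0,s]}$ (time restriction, valid since $s\leq t$) and the definition of $\ItoSigma_s$. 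Taking $s=t$ yields $P_t x_t = x_t$, i.e. $x_t\in\initFock_t$, so the process is adapted and $\QSSigma_\cdot(F)\in\ProcSigma\kAu$; together with the displayed identity this places $\QSSigma_\cdot(F)$ in $\MartSigma\kAu$.

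The main (and essentially only) obstacle is the second equality, namely that $M_s^\Omega = p_s\ot_\Omega I$ commutes with $\Sigma\ot_\Omega I_\initFock$ on $z_{[0,t]}$. I would base this on the affiliation $\Sigma\,\affiliated L^\infty(\Rplus)\otol B(\noisetwo)$, equivalently the invariance $p_s\Sigma\subset\Sigma p_s$ established in Section~\ref{section: qf states for stochastic analysis} (Remark~$(\zeta)$), which lifts through the $\ot_\Omega$-construction in view of the compatibility of $\Sigma\ot I$ and $p_s\ot I$ with the $\Omega$-adapted subspace guaranteed by Lemma~\ref{lemma: tensor Omega}; the membership $z_{[0,t]}\in\Dom\Sigma\ot_\Omega I_\initFock$ coming from $z\in\Domloc\Sigma\ot_\Omega I_\initFock$ then legitimises this manipulation of unbounded operators. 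No deeper input is needed: everything reduces to the abstract It\^o commutation relation of Corollary~\ref{DP =MD} and the affiliation of $\Sigma$, with Proposition~\ref{ItoSigma isometry} available as an alternative bookkeeping device should one prefer to verify the identity through its norm rather than directly.
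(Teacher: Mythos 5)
Your proposal is correct and follows essentially the paper's own route: the paper's one-line proof asserts precisely that $\ItoSigma_\cdot(z)$ is an It\^o-integral process and hence a vector martingale, which is exactly the content of your computation via $P_s\,\Ito = \Ito\, M_s^\Omega$ (the adjoint form of Corollary~\ref{DP =MD}) together with the affiliation of $\Sigma$. You have merely made explicit the consistency step $M_s^\Omega(\Sigma\ot_\Omega I_\initFock)z_{[0,t]} = (\Sigma\ot_\Omega I_\initFock)z_{[0,s]}$, resting on $p_s\Sigma\subset\Sigma p_s$ and Lemma~\ref{lemma: tensor Omega}, which the paper leaves implicit.
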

 \begin{proof}
 This follows from the fact that,
 for any $z \in \Domloc(\Sigma\ot_\Omega I_\initFock)$,
 $\ItoSigma_\cdot(z)$ is
 (an It\^o-integral process and thus)
 a vector martingale.
\end{proof}

 We conclude with the converse, which may be viewed as
 confirmation that the general definition of quasifree integrals
 given here is the correct one.

\begin{thm}
\label{main theorem}
 Let $X \in \MartSigma\kAu$.
 Then the following hold.
 \begin{alist}
 \item
 There is a unique $F \in \IntegSigma\kAu$
 such that
 \begin{equation}
 \label{star}
 X_t - X_0 = \QSSigma_t(F)
 \qquad
 \qquad (t \geq 0).
 \end{equation}
 \item
 The martingale
 $X$ is adjointable if and only if
 the operator $X_0$ is adjointable and
 the integrand process
 $F$ is adjointable. In this case
 \[
 X^\dagger_t - X^\dagger_0 = \QSSigma_t(F^\dagger)
 \qquad
 \qquad (t \geq 0).
 \]

 \end{alist}
\end{thm}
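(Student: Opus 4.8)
The plan is to reduce the entire statement to the vector martingale representation of Theorem~\ref{X} through the matrix–column–vector correspondences of Section~\ref{section: transpose and conjugate}, exploiting that a quasifree martingale is by definition one whose vector process $(X_t\xi)_{t\geq 0}$ is a vector martingale in $\initFock$. For part~(a), first I would set $x := (X_t\xi)_{t\geq 0}$ and apply Theorem~\ref{X}(a), which supplies a unique $z \in \Domloc\,\Sigma\ot_\Omega I_\initFock$ with $X_t\xi - X_0\xi = \ItoSigma_t z$ for all $t$. Next I would manufacture the integrand: for each $t$ the vector $z_t$ lies in $\noisetwo\ot\initFock_t$, so the vector–operator isomorphism~\eqref{eqn: op vector} produces a column $\colFt \in \ucol_{\noisetwo}(\InitNoise,\xi) = \Op_\InitNoise(\Xi;\noisetwo\ot\initFock)$ with $\colFt\xi = z_t$, and Corollary~\ref{Corollary 3.3} then yields a unique $F_t \in \umatrix_{\khat}(\InitNoise,\xi)_0$ whose column is $\colFt$. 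By construction $\colFcdot\xi = z \in \Domloc\,\Sigma\ot_\Omega I_\initFock$, so $F \in \IntegSigma\kAu$.

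With $F$ in hand the representation is immediate, since $\QSSigma_t(F)\xi = \ItoSigma_t(\colFcdot\xi) = \ItoSigma_t z = (X_t - X_0)\xi$; as both $\QSSigma_t(F)$ and $X_t - X_0$ lie in $\Op_\InitNoise(\Xi;\initFock)$ and agree at $\xi$, the injectivity in~\eqref{eqn: op vector} forces $X_t - X_0 = \QSSigma_t(F)$. Uniqueness is then automatic: two integrands giving~\eqref{star} have columns with equal $\ItoSigma_t$-images at $\xi$ for every $t$, so the injectivity in Proposition~\ref{ItoSigma isometry} forces their associated $z$-processes to coincide, whence the isomorphisms of~\eqref{eqn: op vector} and Corollary~\ref{Corollary 3.3} force the integrands themselves to coincide.

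For part~(b) the pivotal link is the identity $\colFdaggert\xi = (k^\pi\ot S)\colFt\xi = (k^\pi\ot S)z_t$ of Corollary~\ref{Corollary 3.3}, which I would match against the formula $\big((K^\pi\ot_\Omega S)z_{[0,t]}\big)(s) = (k^\pi\ot S)z_s$ of Theorem~\ref{X}(b). In the forward direction, if $X$ is adjointable then each $X_t\xi \in \Dom S$ by~\eqref{eqn: opdagger dom S}, so $x$ is $\Dom S$-valued; Theorem~\ref{X}(b) then gives $X_0\xi \in \Dom S$, i.e.\ $X_0$ adjointable, together with $z_{[0,t]} \in \Dom\,\Sigma K^\pi\ot_\Omega S$ for all $t$. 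From the latter I would read off, via the displayed identity, first that $z_t \in \Dom k^\pi\ot S$, so $\colFt \in \ucoldagger_{\noisetwo}(\InitNoise,\xi)$ by Theorem~\ref{Theorem 3.2} and hence $F_t \in \umatrixdagger_{\khat}(\InitNoise,\xi)_0$ by the dagger restriction in Corollary~\ref{Corollary 3.3}; and secondly that $\colFdaggercdot\xi$, which on $[0,t]$ equals $(K^\pi\ot_\Omega S)z_{[0,t]}$, lies in $\Dom\,\Sigma\ot_\Omega I_\initFock$, giving $F^\dagger \in \IntegSigma\kAu$. Thus $F$ is adjointable. The converse runs the same equivalences backwards: adjointability of $X_0$ and of $F$ returns $z_{[0,t]} \in \Dom\,\Sigma K^\pi\ot_\Omega S$, whence Theorem~\ref{X}(b) makes $x$ be $\Dom S$-valued, i.e.\ $X$ adjointable; and the adjoint representation follows by applying $S$ to the vector identity, using $X^\dagger_t\xi = S X_t\xi$ together with~\eqref{Sxt} and the definition $\QSSigma_t(F^\dagger)\xi = \ItoSigma_t(\colFdaggercdot\xi)$.

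The hard part will be the bookkeeping in part~(b): matching the pointwise dagger identity $\colFdaggert\xi = (k^\pi\ot S)z_t$ with the locally integrated object $(K^\pi\ot_\Omega S)z_{[0,t]}$, and verifying that membership of $z_{[0,t]}$ in $\Dom\,\Sigma K^\pi\ot_\Omega S$ is \emph{exactly} equivalent to $\colFdaggercdot\xi$ lying in $\Domloc\,\Sigma\ot_\Omega I_\initFock$, so that $F^\dagger$ is genuinely a quasifree integrand. A subsidiary point needing care is the passage from the almost-everywhere statement $z_t \in \Dom k^\pi\ot S$ to membership $F_t \in \umatrixdagger_{\khat}(\InitNoise,\xi)_0$ as demanded by the definition of an adjointable integrand; under Assumption~$(\text{e}_+)$ this is precisely the content of Remark~(x) following the definition of $\QSSigma_t$, and in general it is absorbed into the almost-everywhere determination of the integrand process.
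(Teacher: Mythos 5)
Your proposal is correct and takes essentially the same route as the paper: part (a) via Theorem~\ref{X}(a), the vector--operator correspondence~\eqref{eqn: op vector} and Corollary~\ref{Corollary 3.3}, and part (b) by matching the Theorem~\ref{Theorem 3.2}/Corollary~\ref{Corollary 3.3} characterisation of adjointable integrands against Theorem~\ref{X}(b), including the identity $Sx_t - Sx_0 = \ItoSigma_t\big((k^\pi\ot S)z_\cdot\big)$ for the adjoint representation. The domain bookkeeping you flag as the hard part is precisely what the paper has already absorbed into Theorem~\ref{X}(b) (via Lemma~\ref{tedious} and Proposition~\ref{cor to E-O}) and into Remark~(x), so no additional argument is required beyond invoking those statements as you do.
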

\begin{proof}
 (a)
 Uniqueness follows from uniqueness in Theorem~\ref{X}.
 Let
 $x = \big(X_t \xi\big)_{t\geq 0}$
 be the corresponding vector process in $\initFock$.
 Then,
 by Theorem~\ref{X},
 there is a unique
 $z\in \Domloc \Sigma \ot_\Omega I_\initFock$
 such that
 $x_t - x_0 = \ItoSigma_t z$ for all $t\in \Rplus$.
 Now define
 \[
 Q_t := \ket{z_t}^{\xi} \in
 \Op_\InitNoise(\Xi; \noisetwo \ot \initFock) =
 \ucol_{\noisetwo}(\InitNoise, \xi)
 \]
 and,
 recalling Corollary~\ref{Corollary 3.3},
 define $F_t \in \umatrix_{\khat}(\Mil,\xi)_0$ by
 $\colFt = Q_t$ ($t\in\Rplus$). Then
 $
  Q_\cdot \xi = z \in
 \Domloc \Sigma \ot_\Omega I_\initFock
 $
 and so
 $F \in \IntegSigma\kAu$ and~\eqref{star} holds since
 \[
 \QSSigma_t(F) \xi =
 \ItoSigma_t (Q_\cdot \xi) =
 x_t - x_0 =
 (X_t - X_0) \xi.
\]

 (b)
 Now suppose that the operator $X_0$ is adjointable.
 By Theorem~\ref{Theorem 3.2},
 the adjointability of the integrand process $F$
 is equivalent to
 \begin{align*}
 &Q_\cdot \xi \text{ is a.e. }
 \Dom (k \ot S)\text{-valued, and  }
 \\
 &(k \ot S) Q_\cdot \xi \in \Domloc \Sigma\ot_\Omega I_\initFock.
 \end{align*}
 Since $\pi$ is unitary,
 $k$ may be replaced by $k^\pi = \pi \circ k$ and so,
 by Theorem~\ref{X},
 this is equivalent to
 \[
 (x_t - x_0) \in \Dom S \text{ for all } t \in \Rplus,
 \]
 in which case,
 \[
 S x_t - S x_0 =
 \ItoSigma_t \big(
 ( k^\pi \ot S) z_\cdot
 \big)
 \qquad
 \text{ for all } t \in \Rplus.
 \]
 Thus $F$ is adjointable if and only if $X$ is adjointable,
 in which case,
 by Corollary~\ref{Corollary 3.3},
\[
 X^\dagger_t \xi - X^\dagger_0 \xi
 =
  \ItoSigma_t\big( ( k^\pi \ot S ) Q_\cdot \xi \big)
 =
 \ItoSigma_t \big( \colFdaggercdot \xi \big)
 = \QSSigma_t(F^\dagger) \xi.
 \]
 (b)
 follows and so the proof is complete.
\end{proof}

\begin{rem}
 If Assumption \tu{($\text{e}_+$)} also holds then, by Remark (x)
 following the definition of quasifree integrands,
 the conditions for $F$ to be adjointable simplify.
\end{rem}

\renewcommand{\theequation}{\Alph{section}.\arabic{equation}}

\renewcommand{\thesection}{\Alph{section}}

\setcounter{section}{0}

\renewcommand{\thepropn}{\Alph{section}.\arabic{propn}}

\section*{Appendix: Unbounded operators and tensor products}
 \label{section: unbounded operators}

\setcounter{section}{1} \setcounter{propn}{0}

 In this appendix we collect some basic facts about the behaviour of
 unbounded linear and conjugate-linear operators under composition,
 adjoint, orthogonal sum and tensor operations,
 for ease of reference in the paper.

 For compatible densely defined Hilbert space operators we have
 the following inclusions
 \begin{align}
 &(S_1 + \lambda S_2)^* \supset S_1^* + \ol{\lambda} S_2^*,
 \ \text{ with equality if } S_1 \text{ is bounded,}
 \nonumber
  \\
 &(S_3S_4)^* \supset S_4^*S_3^*,
 \ \text{ with equality if } S_3 \text{ is bounded},
  \label{adjoint product}
 \end{align}
 whenever
 $S_1 + \lambda S_2$ and $S_3 S_4$ are also densely defined and
 $\lambda \in \Comp\setminus \{0\}$.
 We refer to~\eqref{adjoint product} as the
  \emph{adjoint-product-inclusion relation}.
  We call a Hilbert space operator $T$, with target $\Hil$,
  injective/surjective/bijective if it has that property
  as a map from $\Dom T$ to $\Hil$. Thus if $T$ is injective
  then $T^{-1}$ is the operator given by $\Dom T^{-1} = \Ran T$,
  $Tu \mapsto u$; if $T$ is closed and bijective then $T^{-1}$ is
  everywhere defined and, by the Closed Graph Theorem, bounded ---
  as is usual, we refer to such operators as invertible.
 Here are some more detailed relations.
 They each follow, in turn, from the definitions;
 proofs of (a) and (b) may be found, for example, in~\cite{Weidmann}.
 Recall that a core for an operator $T$ is a subspace of its domain
 which is dense in the graph norm of $T$.

\begin{lemma} \label{Lemma 1}
 Compatible Hilbert space operators satisfy the following.
\begin{alist}
 \item
 Let
  $S, B, R, E$ and $F$ be operators, with
 $S$ closable,
 $B$ bounded,
 $R$ closed and injective with bounded inverse,
 $E$ bounded, everywhere defined and bijective, and
 $F$ bounded and injective with bounded inverse.
 Then \tu{(}when defined\tu{)}
 \begin{rlist}
 \item
 $\ol{S} B$ and $R \ol{S}$ are closed\tu{;}
 \item
 if $B S$ is closable and $\Dom B \supset \Ran S$ then
 $B \ol{S}$ is closable and
 \[
 \ol{B \ol{S}} = \ol{B S}\tu{;}
 \]
 \item
 $FSE$ is closable and
 \[
 \ol{FSE} = F \ol{S} E,
 \]
 in particular,
 $F \ol{S} E$ is closed with core $E^{-1} \Dom S$.
\end{rlist}
 \item
 Let $T$ be a closed and densely defined operator,
 and let $D$ be a closed, densely defined and bijective operator.
 Then \tu{(}when defined\tu{)}
 \[
 (T D)^* = D^* T^*.
 \]
 \item
 Let $S$ be a closable operator and
 $V$  an \tu{(}everywhere defined\tu{)} isometric operator
 satisfying
 $
 \ol{S} V V^* \supset V V^* \ol{S}.
 $
 Then $V^* \ol{S} V$ is closed and $V^*( \Dom S)$ is a core
 for both $\ol{S} V$ and $V^* \ol{S} V$.
 Moreover, if $S$ is also densely defined then
 \[
 (V^* \ol{S} V)^* = V^* S^* V.
 \]
\end{alist}
 \end{lemma}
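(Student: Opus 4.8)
The plan is to dispatch parts (a) and (b) quickly, since these are standard facts about unbounded operators (see, e.g.,~\cite{Weidmann}), and to concentrate the real effort on part (c), where the commutation hypothesis $\ol{S}VV^* \supset VV^*\ol{S}$ does the essential work. For (a)(i) I would run the usual graph-sequence argument: boundedness of $B$ gives $x_n \to x \Rightarrow Bx_n \to Bx$, so closedness of $\ol{S}$ transfers to $\ol{S}B$; and for $R\ol{S}$ one feeds $\ol{S}x_n = R^{-1}(R\ol{S}x_n)$ through the bounded inverse $R^{-1}$ before invoking closedness of $\ol{S}$. For (a)(ii) the key chain is $BS \subset B\ol{S} \subset \ol{BS}$: the first inclusion is trivial, and the second holds because for $x \in \Dom\ol{S}$ a graph-approximating sequence $x_n \in \Dom S$ gives $BSx_n \to B\ol{S}x$ by continuity of $B$, placing $(x, B\ol{S}x)$ in $\Graph\ol{BS}$; closability of $BS$ then yields closability of $B\ol{S}$ with equality of closures. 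Part (a)(iii) I would deduce from (i): $F\ol{S}$ is closed (an instance of $R\ol{S}$), hence so is $(F\ol{S})E$ (a closed operator post-composed with a bounded everywhere-defined operator), and both the identity $\ol{FSE}=F\ol{S}E$ and the core claim follow by the same graph-approximation, this time pulling back through the bounded bijection via $x_n = E^{-1}y_n$. For (b) bijectivity of $D$ is decisive: $D^*T^* \subset (TD)^*$ is~\eqref{adjoint product}, and for the reverse one writes each $u\in\Dom T$ as $u = Df$ with $f = D^{-1}u$, computes $\ip{Tu}{g} = \ip{u}{(D^{-1})^*h}$ for $g\in\Dom(TD)^*$, and uses $(D^{-1})^* = (D^*)^{-1}$ to place $T^*g = (D^*)^{-1}h$ in $\Dom D^*$ with $D^*T^*g = h$.

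For (c) I would first note $V^*(\Dom S) \subset \Dom\ol{S}V$, since the hypothesis gives $VV^*d \in \Dom\ol{S}$ for $d\in\Dom S$. The crux for closedness is the identity $\ol{S}Vx = VV^*\ol{S}Vx$ for $x\in\Dom\ol{S}V$, which follows from $VV^*\ol{S}\subset\ol{S}VV^*$ applied to $Vx$ together with $V^*V = I$. With it, a sequence $x_n\to x$, $V^*\ol{S}Vx_n\to y$ gives $\ol{S}Vx_n = V(V^*\ol{S}Vx_n)\to Vy$, so closedness of $\ol{S}$ forces $Vx\in\Dom\ol{S}$ and $V^*\ol{S}Vx = y$; this is the step that genuinely needs the hypothesis, since left-multiplication by the (non-injective) bounded operator $V^*$ does not preserve closedness on its own. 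The same identity makes $V^*(\Dom S)$ a core for both $\ol{S}V$ and $V^*\ol{S}V$: given $x$ with $Vx\in\Dom\ol{S}$, choose $d_n\in\Dom S$ with $d_n\to Vx$ and $Sd_n\to\ol{S}Vx$, and verify that $x_n := V^*d_n$ converges to $x$ in each graph norm, using $\ol{S}Vx_n = VV^*Sd_n \to \ol{S}Vx$.

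For the adjoint identity, set $T := V^*\ol{S}V$, which is closed and densely defined (density of $V^*(\Dom S)$ comes from surjectivity of $V^*$). Using the core, every $x = V^*d$ with $d\in\Dom S$ satisfies $Tx = V^*Sd$, so $g\in\Dom T^*$ with $T^*g = h$ is equivalent to $Vg\in\Dom S^*$ and $S^*Vg = Vh$; hence $T^*\subset V^*S^*V$ with $T^*g = V^*S^*Vg$. The reverse inclusion is the one place needing care, since a priori $\Dom V^*S^*V = \{g : Vg\in\Dom S^*\}$ is larger, omitting the constraint $S^*Vg\in\Ran V$. To remove that constraint I would transpose the hypothesis to $S^*VV^*\supset VV^*S^*$ — by taking adjoints of $VV^*\ol{S}\subset\ol{S}VV^*$ and applying~\eqref{adjoint product}, using $\ol{S}^* = S^*$ and that $VV^*$ is bounded, self-adjoint and everywhere defined — and then apply it at $w = Vg\in\Dom S^*$ to obtain $S^*Vg = S^*VV^*Vg = VV^*S^*Vg \in \Ran V$ automatically. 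Thus $\Dom V^*S^*V\subset\Dom T^*$ and $(V^*\ol{S}V)^* = V^*S^*V$.

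I expect the main obstacle to be precisely this final domain subtlety in (c): the identity $(V^*\ol{S}V)^* = V^*S^*V$ is not a formal consequence of the product-adjoint rules (which yield only inclusions), and upgrading it to an equality forces one to derive and exploit the \emph{adjoint} commutation relation $S^*VV^*\supset VV^*S^*$ in order to see that $S^*Vg$ can never escape $\Ran V$.
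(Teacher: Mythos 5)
Your proof is correct. The paper in fact offers no argument for this lemma at all — it says the parts ``follow, in turn, from the definitions'' and refers to Weidmann for (a) and (b) — so there is nothing to diverge from, and your write-up supplies exactly the standard graph-sequence arguments for (a) and the $u = Df$ device for (b) (just note that in (a)(i) and (a)(iii) the final step also invokes closedness of $R$, respectively of $F$, to conclude $Rz = y$). Most importantly, you correctly locate and close the only genuinely delicate point, the adjoint identity in (c): the commutation hypothesis gives the key identity $\ol{S}Vx = VV^*\ol{S}Vx$ (whence closedness and the core property), and the equality $(V^*\ol{S}V)^* = V^*S^*V$, which the product-adjoint rules alone cannot deliver, is obtained precisely as it must be — by passing to the adjoint relation $S^*VV^* \supset VV^*S^*$ via~\eqref{adjoint product} and using it to show $S^*Vg = VV^*S^*Vg \in \Ran V$ automatically.
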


 We need to consider tensor products of unbounded operators.
 The following commonly used notation is convenient.
 For operators $T_1$ and $T_2$,
 $T_1 \otul T_2$ denotes the unique operator $T$ satisfying
 \begin{align*}
 &\Dom T :=
 \Dom T_1\otul \Dom T_2
 \\
 &T (u_1 \ot u_2) =
 T_1 u_1 \ot T_2 u_2
 \quad
 (u_1 \in \Dom T_1, u_2 \in \Dom T_2).
 \end{align*}
  The elegant proof of part (c) below is from~\cite{Weidmann},
  it perhaps deserves to be better known; for other proofs, see
 Section VII.10 of~\cite{ReS} and Chapter 9 of~\cite{StZ}.
 Recall that, for an operator $T$ on $\Hil$, a vector $x\in\Hil$ is
 \emph{analytic for $T$} if $x\in\bigcap_{n\in\Nat} \Dom T^n$
 and $\sum_{n\geq 0} (n!)^{-1} \norm{(tT)^n x} < \infty$, for some
 $t > 0$.

 \begin{lemma}
 \label{Lemma 2}
 Let $T = T_1 \otul T_2$ for Hilbert space operators $T_1$ and $T_2$.
 \begin{alist}
 \item
 If $T_1$ and $T_2$ are closable then $T$ is too.
 \item
  If $T_1$ and $T_2$ are closable and densely defined then
  \begin{rlist}
  \item
   $T^* = \ol{T_1^* \otul T_2^*}$,
  \item
  $\ol{T} = (T_1^* \otul T_2^*)^*$.
  \end{rlist}
\item
 If $T_1$ and $T_2$ are essentially selfadjoint then $T$ is too.
 \end{alist}
 \end{lemma}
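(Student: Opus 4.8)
The plan is to take the three parts in turn, using throughout the single computational fact that
\[
 \ip{T(u \ot v)}{a \ot b} = \ip{T_1 u}{a}\,\ip{T_2 v}{b} = \ip{u}{T_1^* a}\,\ip{v}{T_2^* b} = \ip{u \ot v}{T_1^* a \ot T_2^* b},
\]
valid for $u \ot v \in \Dom T$ and $a \ot b \in \Dom T_1^* \otul \Dom T_2^*$; by linear extension this shows $T_1^* \otul T_2^* \subset T^*$. For part (a), if $T_1,T_2$ are not densely defined I would first replace each by its restriction to the dense subspace $\Dom T_i$ of $\ol{\Dom T_i}$, which changes neither the graph nor hence the closability. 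Then $T_1^*\otul T_2^*$ is densely defined, so $(T_1^* \otul T_2^*)^*$ is a closed operator containing $T$; an operator possessing a closed extension is closable, giving (a) at once.

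For part (b) I would first record the reduction $\ol T = \ol{\,\ol{T_1} \otul \ol{T_2}\,}$, proved by a diagonal approximation to get $\ol{T_1}\otul\ol{T_2} \subset \ol T$ (the reverse inclusion being trivial), so that without loss of generality $T_1,T_2$ are closed and densely defined. Writing $R := T_1^* \otul T_2^*$, the inclusion $R \subset T^*$ with $T^*$ closed gives both $\ol R \subset T^*$ of (i) and, dually, $\ol T \subset R^*$ of (ii) for free. Moreover (i) will follow from (ii) on taking adjoints: since $R$ is densely defined and closable (being contained in the closed $T^*$), we have $T^* = (\ol T)^* = R^{**} = \ol R$. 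Thus everything reduces to the reverse inclusion $R^* \subset \ol T$ in (ii). This is the genuinely technical point and the step I expect to be the main obstacle: I would establish it by a one-variable-at-a-time (Fubini-type) approximation, writing a given $w \in \Dom R^*$ in an orthonormal basis of the second factor and using the closedness of $T_1$ to approximate in the first factor, thereby placing $w$ in the graph-closure of $T$. This is exactly the argument carried out in \cite{Weidmann} and \cite{ReS}, which I would follow.

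The interesting part is (c), where I would use the analytic-vector method. By the reduction of the previous paragraph, and since essential selfadjointness is a property of the closure, I may assume $T_1,T_2$ selfadjoint, so that $A := T_1 \otul T_2$ is symmetric (immediate on simple tensors) with the same closure as $T$. It then suffices to produce a dense set of analytic vectors for $A$ inside $\Dom A$ and apply Nelson's analytic vector theorem. The subtlety — and the reason a naive choice fails — is that a single $n!$ cannot absorb the product of two factorial-type growth rates coming from generic analytic vectors; the remedy is to use \emph{entire} vectors drawn from spectral projections. Letting $E_i$ denote the spectral measure of $T_i$, any $x_i \in \Ran E_i([-N,N])$ lies in $\bigcap_n \Dom T_i^n$ and satisfies $\norm{T_i^n x_i} \le N^n \norm{x_i}$, so for $x_1 \in \Ran E_1([-N,N])$ and $x_2 \in \Ran E_2([-M,M])$,
\[
 \norm{A^n(x_1 \ot x_2)} = \norm{T_1^n x_1}\,\norm{T_2^n x_2} \le (NM)^n \norm{x_1}\,\norm{x_2},
\]
whence $\sum_{n \ge 0} (n!)^{-1} t^n \norm{A^n(x_1 \ot x_2)} \le \norm{x_1}\,\norm{x_2}\, e^{tNM} < \infty$ for every $t>0$. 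Thus each such $x_1 \ot x_2$ is entire for $A$ and lies in $\bigcap_n \Dom A^n$; since $\bigcup_N \Ran E_i([-N,N])$ is dense in $\Hil_i$, the algebraic span of these product vectors is dense, and Nelson's theorem delivers the essential selfadjointness of $A$, hence of $T$.
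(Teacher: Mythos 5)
Your parts (a) and (c) are correct: (a) is the same restriction-to-closed-domains reduction the paper uses (the paper happens to deduce (a) from (b), but your direct route via $T \subset (T_1^*\otul T_2^*)^*$ is fine), and your (c) --- entire vectors from spectral projections plus Nelson's theorem --- is a valid alternative to the paper's device, which instead takes analytic vectors of the \emph{squares} $\ol{T_1}^2$, $\ol{T_2}^2$ and uses $\norm{\ol{T_i}^{\,n}x}^2 \le \norm{x}\,\norm{\ol{T_i}^{\,2n}x}$ to defeat exactly the $(n!)^2$ obstruction you identify. The genuine gap is in (b). Your reductions there are sound, but they reduce the lemma to precisely its substance, the inclusion $R^* \subset \ol{T}$ for $R := T_1^*\otul T_2^*$, and the argument you sketch for that step would fail: expanding $w \in \Dom R^*$ along an orthonormal basis $(f_k)$ of the second factor, the only information available about the slices $w_k$ is through the coupled pairings $\sum_k \ip{w_k}{T_1^* a}\,\ip{f_k}{T_2^* b}$, and unless $T_2^*$ acts diagonally with respect to $(f_k)$ these cannot be decoupled, so the closedness of $T_1$ tells you nothing about the individual $w_k$. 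The basis argument does prove the ampliation case $(T_1 \otul I)^* = \ol{T_1^* \otul I}$, but extending it to general $T_2$ requires polar decomposition and spectral projections of $|T_2|$ --- a substantially different argument from the one-variable Fubini step you describe --- and the generality needed here (closed, densely defined, possibly non-selfadjoint operators between different spaces) goes beyond the selfadjoint tensor-product results in \cite{ReS}; the paper credits \cite{Weidmann} only for the part (c) trick and supplies the (b) argument itself.

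The paper closes this gap by a duality argument whose key input is exactly your part (c), so the repair is available inside your own proposal: prove (c) first (your spectral-projection proof stands alone, using only $\ol{T_1}\otul\ol{T_2} \subset \ol{T}$ and symmetry, so no circularity arises), then argue for (b)(i) as follows. If $z \in \Dom T^*$ is orthogonal to $\Dom T_1^* \otul \Dom T_2^*$ in the graph inner product of $T^*$, set $A := \ol{T_1}T_1^* \otul \ol{T_2}T_2^*$; then $A \subset \ol{T}T^*$ and $0 = \ip{z}{u} + \ip{T^*z}{T^*u} = \ip{z}{(I+A)u}$ for all $u \in \Dom A$. By (c), applied to the selfadjoint factors $\ol{T_i}T_i^*$, the operator $A$ is essentially selfadjoint; since $\ol{T}T^*$ is a selfadjoint extension of $A$ (von Neumann's theorem), $\ol{A} = \ol{T}T^*$, and as $I + \ol{T}T^*$ is invertible, $\Ran(I+A)$ is dense, forcing $z = 0$. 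Hence $\Dom T_1^* \otul \Dom T_2^*$ is a core for $T^*$, which is (i), and (ii) follows by taking adjoints --- the reverse of your (equally valid) reduction of (i) to (ii).
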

\begin{proof}
 (c)
 First note  that, being densely defined and symmetric,
 $T$ is closable, $\ol{T}$ is symmetric and
 $\ol{T} \supset \ol{T_1} \otul \ol{T_2}$.
 Let $\analytic_1$, $\analytic_2$ and $\analytic$
 denote respectively the space of analytic vectors for
 the operators
 $\ol{T_1}^2$, $\ol{T_2}^2$ and $\ol{T}$.
 It is easily verified that
 $\analytic \supset \analytic_1 \otul \analytic_2$.
 Since a closed symmetric operator is selfadjoint
 if and only if
 its space of analytic vectors is dense (\cite{nelson}; see Theorem
 X.39 of~\cite{RStwo}), (c) follows.

 (b)
 (ii) follows from (i) by taking adjoints. We prove (i).
 It is easily seen that $T^* \supset T_1^* \otul T_2^*$, so $T$ is
 closable, and that $\ol{T_1} \otul \ol{T_2} \subset \ol{T}$.
 We must show that $\Dom T^*_1 \otul T_2^*$ is a core for $T^*$.
 Suppose therefore that $z\in\Dom T^*$ is orthogonal to
 $\Dom T_1^* \otul \Dom T_2^*$ with respect to the graph inner
 product of $T^*$; we must show that $z=0$.
 Setting
 $A := \ol{T_1} T_1^* \otul \ol{T_2} T_2^*$, we have
 $A \subset \ol{T}T^*$ and, for all $u\in \Dom A$,
 \[
 0 =
  \ip{z}{u} + \ip{T^*z}{T^*u} =
  \ip{z}{(I+A)u}.
 \]
 By (c) $A$ is essentially selfadjoint and so
 $\ol{T}T^* = \ol{A}$.
 Now $I+\ol{T}T^*$ is invertible,
 so $I+A$ has dense range and thus $z=0$, as required.

 (a)
 This follows by applying (b) to the operators obtained by viewing
 $T_1$, $T_2$ and $T$ as densely defined operators from the Hilbert
 spaces $\ol{\Dom} T_1$, $\ol{\Dom} T_2$ and $\ol{\Dom} T$
 respectively.
\end{proof}

\begin{notn}
 For closed operators $R_1$ and $R_2$
 (following common practice) we set
 \[
 R_1 \ot R_2 := \ol{R_1 \otul R_2}.
 \]
 Thus, for closable densely defined operators $T_1$ and $T_2$,
 we have
 \begin{equation}
 \label{operator tensor}
 (T_1 \otul T_2)^* = T_1^* \ot T_2^* = (\ol{T_1} \ot \ol{T_2})^*.
 \end{equation}
\end{notn}
 The useful facts collected together next may all be proved
 by systematic application of the above two lemmas.

\begin{propn}
 \label{Lemma 3}
 For $i=1,2$, let
 $R_i$, $\wt{R}_1$, $T_i$, $B_i$, $\wt{B_1}$, $E_i$ and $F_i$
 be Hilbert space operators,
 with $R_i$ and $\wt{R}_1$ closed, $T_i$ closed and densely defined,
 $B_i$ and $\wt{B}_1$ bounded and everywhere defined,
 $E_i$ bounded, everywhere defined and bijective, and
 $F_i$ bounded, and injective with bounded inverse,
 and set
 \[
 R = R_1 \ot R_2, \
 T = T_1 \ot T_2, \
 B = B_1 \ot B_2, \
 E = E_1 \ot E_2, \
 F = F_1 \ot F_2, \
 \]
 and $\wt{R} = \wt{R}_1 \ot R_2$.
 Then the following hold
 \tu{(}when the compositions are defined\tu{):}
  \begin{alist}
 \item
 $RB \supset R_1 B_1 \ot R_2 B_2$.
 \item
 $TB = T_1 B_1 \ot T_2 B_2$ if $T_1B_1$ and $T_2B_2$ are densely defined.
 \item
 $RE = R_1 E_1 \ot R_2 E_2$.
 \item
 If $BR$, $B_1R_1$ and $B_2R_2$ are closable then
 $\ol{BR} = \ol{B_1R_1} \ot \ol{B_2R_2}$,
 in particular, $F R = F_1 R_1 \ot F_2R_2$.
 \item
 $T = \ol{(T_1\ot I_2)(I_1 \ot T_2)}$, and if either
 $T_1$ is injective with bounded inverse, or
 $T_2$ is bounded, then
 $(T_1\ot I_2)(I_1 \ot T_2)$ is closed, so
 $
 T = (T_1\ot I_2)(I_1 \ot T_2).
 $
 \item
 If $R_1 B_1 \supset \wt{B}_1 \wt{R}_1$ then
 $R (B_1 \ot I_2) \supset (\wt{B}_1 \ot I_2') \wt{R}$.
 \end{alist}
 \end{propn}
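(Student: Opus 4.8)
The plan is to prove every assertion by the same three-move pattern. First, verify the desired inclusion on the \emph{algebraic} tensor product $\otul$ by evaluating on simple tensors, where everything is elementary. Second, invoke Lemma~\ref{Lemma 1}(a) to record which of $RB$, $RE$, $FR$, $R(B_1\ot I_2)$ is already closed (every product of a closed operator with a bounded one of the appropriate side is). Third, pass from $\otul$ to $\ot$ either by taking closures — using the elementary fact that closure commutes with $\otul$, and that a core for a factor stays a core after composition with an everywhere-defined bounded operator, since that keeps the graph norm comparable — or, for the equalities, by taking adjoints and feeding in the exact tensor-adjoint formula $(T_1\otul T_2)^*=\ol{T_1^*\otul T_2^*}=T_1^*\ot T_2^*$ of Lemma~\ref{Lemma 2}(b) together with the adjoint-product-inclusion relation~\eqref{adjoint product}. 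I would establish the parts in the order (a), (c), (d), (f) (no adjoints) and then (b), (e) (via adjoints), since (b) feeds on (d) and (c) feeds on (a).

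Parts (a), (c), (d) and (f) are handled without adjoints. In (a) the inclusion $R_1B_1\otul R_2B_2\subset RB$ is immediate on simple tensors, and since $RB$ is closed by Lemma~\ref{Lemma 1}(a)(i) it swallows the closure $R_1B_1\ot R_2B_2$. Part (c) follows by applying (a) twice: with $B=E$ for one inclusion, and with the closed operator $R_1E_1\ot R_2E_2$ in the role of $R$ and $B=E^{-1}$ for the other, exploiting $E_iE_i^{-1}=I_i$ and the invertibility of $E$ to reverse the inclusion. For (d) I would observe that on the algebraic domain $B(R_1\otul R_2)=B_1R_1\otul B_2R_2$, apply Lemma~\ref{Lemma 1}(a)(ii) with $S=R_1\otul R_2$ so that $\ol{BR}=\ol{B\ol S}=\ol{BS}$, and finish via Lemma~\ref{Lemma 2}(a) and the commutation of closure with $\otul$; the ``in particular'' is immediate, since $F_iR_i$ is closed by Lemma~\ref{Lemma 1}(a)(i). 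In (f) the two sides agree on the algebraic core $\Dom\wt R_1\otul\Dom R_2$ — here $R_1B_1\supset\wt B_1\wt R_1$ is exactly what places $B_1u_1$ in $\Dom R_1$ — and since $R(B_1\ot I_2)$ is closed while that subspace is a core for $\wt R$, hence for $(\wt B_1\ot I_2')\wt R$ as $\wt B_1\ot I_2'$ is bounded, the inclusion propagates to all of $\Dom\wt R$ by approximation in graph norm.

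The adjoint-based parts are (b) and (e). For (b) the inclusion $TB\supset T_1B_1\ot T_2B_2$ is just (a); for equality I would avoid computing $(TB)^*$ head-on and instead note that $TB$ is itself the adjoint of the \emph{left}-multiplication $B^*T^*$, for which~\eqref{adjoint product} gives the clean identity $(B^*T^*)^*=TB$ (the bounded factor $B^*$ now sitting on the left), whence $(TB)^*=\ol{B^*T^*}$. Applying (d) to $B^*T^*=(B_1^*\ot B_2^*)(T_1^*\ot T_2^*)$ gives $(TB)^*=\ol{B_1^*T_1^*}\ot\ol{B_2^*T_2^*}$, while the same left-multiplication identity $(T_iB_i)^*=\ol{B_i^*T_i^*}$ together with Lemma~\ref{Lemma 2}(b)(i) (valid since $T_iB_i$ is closed by Lemma~\ref{Lemma 1}(a)(i) and densely defined by hypothesis) gives $(T_1B_1\ot T_2B_2)^*=\ol{B_1^*T_1^*}\ot\ol{B_2^*T_2^*}$ too; equal adjoints of closed operators force $TB=T_1B_1\ot T_2B_2$. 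For (e), evaluation on simple tensors gives $T_1\otul T_2\subset(T_1\ot I_2)(I_1\ot T_2)=:C$, so $T\subset\ol C$; for the reverse I only need an inclusion of adjoints, namely $C^*\supset(I_1\ot T_2^*)(T_1^*\ot I_2)\supset T_1^*\otul T_2^*$ (the first by~\eqref{adjoint product}, the second on simple tensors), whence $C^*\supset\ol{T_1^*\otul T_2^*}=T^*$ and therefore $\ol C=C^{**}\subset T^{**}=T$. The closedness addendum is Lemma~\ref{Lemma 1}(a)(i): if $T_2$ is bounded then $I_1\ot T_2$ is bounded, and if $T_1$ is injective with bounded inverse then $T_1\ot I_2$ is closed with bounded inverse, so in either case $C$ is closed and $C=\ol C=T$.

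I expect the main obstacle to be the equality in (b). The naive attempt to compute $(TB)^*$ directly stalls, because~\eqref{adjoint product} delivers only $(TB)^*\supset B^*T^*$ when the bounded factor lies on the right; the equality has to be extracted by recognising $TB$ as the adjoint of the left-multiplication $B^*T^*$ and then matching the two adjoints through (d) and Lemma~\ref{Lemma 2}. A pervasive bookkeeping hazard accompanies this throughout: one must track at each step which operators are genuinely closed, so that the closure bar may legitimately be dropped, as opposed to merely closable, and must respect that adjoint arguments are available only for the densely defined $T_i$ and $T_i^*$, whereas the $R_i$ are assumed closed but not necessarily densely defined.
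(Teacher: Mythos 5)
Your proof is correct and follows exactly the route the paper itself prescribes: the paper gives no written proof of Proposition~\ref{Lemma 3} beyond the remark that these facts ``may all be proved by systematic application of the above two lemmas'', and your argument is precisely such an application of Lemma~\ref{Lemma 1}, Lemma~\ref{Lemma 2} and the adjoint-product-inclusion relation~\eqref{adjoint product}, with the duality trick $(TB)^*=\ol{B^*T^*}$ in (b) being the natural intended device. The one point worth tightening is the ``in particular'' in (d): besides the closedness of the $F_iR_i$ you must also note that $FR$ itself is closed, so that the bar on the left of $\ol{FR}$ may be dropped too --- this follows from Lemma~\ref{Lemma 1}(a)(iii) with $E=I$, since $F=F_1\ot F_2$ is again bounded and injective with bounded inverse.
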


 The following corollary is also useful.

\begin{cor}
\label{tensor invariance}
 Let $T = T_1 \ot T_2$ and $U = U_1 \ot U_2$
 where, for $i=1,2$,
 $T_i$ is a closed and densely defined operator
 from $\Hil_i$ to $\Hil'_i $,
 $U_i$ is a closed subspace of $\Hil_i$,
 and $T_i \big( U_i \cap \Dom T_i \big) \subset U_i$.
 Then $T \big( U \cap \Dom T \big) \subset U$.
\end{cor}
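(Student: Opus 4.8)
The plan is to recast the one-sided invariance hypotheses as projection identities and then to tensor them, using the Appendix results to control the passage to closures. Write $P_i$ for the orthogonal projection onto the closed subspace $U_i$, so that $P := P_1 \ot P_2$ is the orthogonal projection onto $U = U_1 \ot U_2$. The first step is to record the elementary equivalence, valid for any closed densely defined operator $X$ and closed subspace $U$ with projection $P$, between the invariance $X(U \cap \Dom X) \subset U$ and the operator identity $XP = PXP$ (both sides having domain $P^{-1}(\Dom X)$). Applying this factorwise turns the hypotheses into $T_i P_i = P_i T_i P_i$, and turns the desired conclusion into the single identity $TP = PTP$, which is what I would aim to prove.

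Next I would verify this at the algebraic level. On $\Dom T_1 \otul \Dom T_2$ set $S := (T_1 \otul T_2)(P_1 \otul P_2) = (T_1 P_1) \otul (T_2 P_2)$; concretely $S$ sends $v_1 \ot v_2$ to $T_1 P_1 v_1 \ot T_2 P_2 v_2$. The factorwise identities give
\[
 S = (P_1 T_1 P_1) \otul (P_2 T_2 P_2) = (P_1 \otul P_2)\,S = PS,
\]
so $\Ran S \subset \Ran P = U$; as $S = PS$ are literally the same operator, also $\ol{S} = \ol{PS}$, and since $U$ is closed the range of the closure of an operator with range in $U$ again lies in $U$, giving $\Ran \ol{S} \subset U$. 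It then remains to identify $\ol{S}$ with $TP$. Here Part~(b) of Proposition~\ref{Lemma 3}, applied with the bounded everywhere-defined factors $P_1, P_2$, yields $TP = (T_1 \ot T_2)(P_1 \ot P_2) = (T_1 P_1) \ot (T_2 P_2) = \ol{S}$. Combining the two facts gives $\Ran(TP) \subset U$, i.e.\ $TP = PTP$, and the corollary follows from the reformulation above.

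The main obstacle is precisely this last identification $TP = \ol{S}$: Proposition~\ref{Lemma 3}(b) provides the \emph{equality} (rather than the mere inclusion $TP \supset \ol{S}$ coming from Part~(a)) only when each $T_i P_i$ is densely defined, equivalently when $U_i \cap \Dom T_i$ is dense in $U_i$. This density is the crux, since without it the algebraic-plus-closure argument controls $T$ only on the possibly smaller domain $\Dom \ol{S}$ rather than on all of $U \cap \Dom T$; indeed one cannot simply project an approximating sequence for a general $x \in U \cap \Dom T$ back into $U$, because the invariance is one-sided and need not make $P_i$ commute with $T_i$. In the applications needed in the paper this density is automatic — for instance when one factor subspace is the whole space, or when the relevant cores (such as the $\Exps$-type domains for the second-quantised operators in Theorem~\ref{Ito second quantisation commutation}) meet $U_i$ densely — and I would simply verify it in each instance where the corollary is invoked.
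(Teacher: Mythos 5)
Your proof is correct and is, in substance, the paper's own argument: the paper restricts along the inclusion isometries $V_i: U_i \to \Hil_i$ and invokes Part~(b) of Proposition~\ref{Lemma 3} to get $T_{|U} = TV = T_1V_1 \ot T_2V_2 = T_{1|U_1} \ot T_{2|U_2}$, whose range lies in $\ol{U_1 \otul U_2} = U$; you compress along the projections $P_i = V_iV_i^*$ and invoke the same Part~(b) to get $TP = T_1P_1 \ot T_2P_2 = PTP$. Since $\Dom (T_iP_i) = (U_i \cap \Dom T_i) \oplus U_i^{\perp}$, your density condition (that $U_i \cap \Dom T_i$ be dense in $U_i$) is precisely the ``densely defined'' proviso of Proposition~\ref{Lemma 3}(b) under either packaging, so the obstacle you flag is not an artefact of your route: the paper's one-line proof carries exactly the same unstated proviso. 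Your caution is substantive, not pedantic --- without the density the operator identity asserted in both proofs can genuinely fail: take $T_1$ the number operator on $\ell^2$, $U_1 = \Lin\{\delta_1, e\}$ with $e = \sum_k k^{-1}\delta_k \notin \Dom T_1$, $U_2 = \Hil_2$ and $0 \neq b \in \Ker T_2$; then $e \ot b \in U \cap \Dom T$ with $T(e \ot b) = 0$ (approximate $e$ within $\Dom T_1$), yet $e \ot b$ lies in neither $\Dom\big(T_{1|U_1} \ot T_{2|U_2}\big)$ nor $\Dom \ol{S}$, both of which sit inside the closed subspace $(\Comp\delta_1 \oplus U_1^{\perp}) \ot \Hil_2$. (Note that in this example the \emph{conclusion} of the corollary still holds, only the proof strategy breaks.) Your closing remark is also exactly right about how the gap is harmless in context: in every application in the paper --- Lemma~\ref{lemma: tensor Omega}, and Part~(a) of the proof of Theorem~\ref{Ito second quantisation commutation} --- one tensor factor is the whole space and the other ($\initFock_{\noise,t}$ or $\Fock_{\noise,t}$) meets the domain densely via exponential vectors of $p_t \Dom T \subset \Dom T$, so the proviso is satisfied wherever the corollary is invoked.
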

\begin{proof}
 Letting
 $V_1$, $V_2$ and $V$ be the inclusion maps
 of $U_1$, $U_2$ and $U$
 in $\Hil_1$, $\Hil_2$ and $\Hil$ respectively,
 Part (b) of Proposition~\ref{Lemma 3} implies that
 \[
 T_{|U} = T V =
 T_1 V_1 \ot T_2 V_2 =
 T_{1\,|U_1} \ot T_{2\,|U_2},
 \]
 from which the result is evident.
\end{proof}

 For a sequence of operators
 \big($T_n$ from $\Hil_n$ to $\Hil'_n$\big)$_{n\geq 0}$
 an operator $T = \bigoplus T_n$ from
 $\Hil = \bigoplus  \Hil_n$ to
 $\Hil' = \bigoplus \Hil'_n$
 is defined in the obvious way:
 \[
 \Dom T =
 \Big\{
 \xi \in \Hil: \forall_{n\geq 0}\, \xi_n \in \Dom T_n \text{ and }
 \sum_{n\geq 0} \norm{T_n \xi_n}^2 < \infty
 \Big\},
 \quad
  T \xi = (T_n\xi_n).
 \]

 Elementary properties of this construction include the following:
 \begin{itemize}
  \item[*]
 If each $T_n$ is closed then so is $T$.
 \item[*]
 If each $T_n$ is densely defined then so is $T$, and
 $T^* = \bigoplus T_n^*$,
 \item[*]
 If each $T_n$ has core $\core_n$ then $T$ has core
 $\sum_{n\geq 0}^\oplus \core_n$ \tu{(}algebraic sum\tu{)}.
 \item[*]
 $T$ is bounded if and only if each $T_n$ is bounded and
 $\sup_n \norm{T_n} < \infty$.
 \end{itemize}
 Recall the notation
 $\Exps(S):= \Lin \{ \ve(v): v\in S\}$.
 For a closed operator $R$ from $\hil_1$ to $\hil_2$,
 operators  from $\Gamma(\hil_1)$ to $\Gamma(\hil_2)$
 are defined by
 \begin{align*}
 &\Gamma(R) := \bigoplus R^{(n)},
 \ \text{ where, for } n \geq 0, \
 R^{(n)} := {V'_n}^* R^{\ot n} V_n, \text{ and }
 \\
 & \Gamma(R)_| := \Gamma(R)_{|\Exps(\Dom R)},
 \end{align*}
 $V_n$ and $V'_n$ being the inclusions
 $\hil_1^{\vee n} \to \hil_1^{\ot n}$ and
 $\hil_2^{\vee n} \to \hil_2^{\ot n}$.

 \begin{propn}
 \label{second quantisation}
 Let $R$, $S$ and $T$ be operators from $\hil_1$ to $\hil_2$
 such that $S$ is densely defined,
 $R$ is closed and $T$ is closed and densely defined,
 and let
 $C$ be an everywhere defined contraction operator from $\hil_0$ to
 $\hil_1$.
 Then the following hold.
\begin{rlist}
 \item
 $\Gamma(R)$ is closed.
 \item
 If $\core$ is a core for $R$ then
 $\Exps(\core)$ is a core for $\Gamma(R)$.
 \item
 $\Gamma(S)^* = \Gamma(S^*)$.
 \item
 $\Gamma(C)$ is an everywhere defined contraction operator.
 \item
 $
 \Gamma(RC) \subset \Gamma(R) \Gamma(C)$.
 \item
 $\Gamma(TC) = \Gamma(T) \Gamma(C)
 $,
 when $TC$ is densely defined.
\end{rlist}
 \end{propn}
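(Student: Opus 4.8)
The plan is to prove each assertion componentwise through the orthogonal-sum decomposition $\Gamma(R)=\bigoplus_n R^{(n)}$ and then invoke the elementary properties of orthogonal sums recalled above, closedness, adjoint, core and boundedness all being inherited from the summands. The one observation that drives everything is that, for any closed $R$ from $\hil_1$ to $\hil_2$, the tensor power $R^{\ot n}$ intertwines the permutation unitaries of $\hil_1^{\ot n}$ and $\hil_2^{\ot n}$ (the $n$ slots all carry the same operator), hence commutes with the symmetrising projections in the sense $R^{\ot n} V_n V_n^* \supset V'_n {V'_n}^* R^{\ot n}$, and maps $\Ran V_n\cap\Dom R^{\ot n}$ into $\Ran V'_n$. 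Consequently $R^{(n)}={V'_n}^* R^{\ot n}V_n$ is nothing but the transport, under the isometries $V_n,V'_n$, of the part of $R^{\ot n}$ acting between the symmetric subspaces.

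With this in hand, (i) and (iv) are immediate. Since $R^{\ot n}=\ol{R^{\otul n}}$ is closed, its symmetric part $R^{(n)}$ is closed by a direct graph-limit argument using that $R^{\ot n}V_n\xi_k$ remains in $\Ran V'_n$, and the orthogonal sum of closed operators is closed; for (iv), $C^{\ot n}$ is an everywhere-defined contraction, so each $C^{(n)}$ is a product of contractions with $\sup_n\norm{C^{(n)}}\le 1$, whence $\Gamma(C)=\bigoplus_n C^{(n)}$ is an everywhere-defined contraction. For (iii) I would use Lemma~\ref{Lemma 2}(b) to obtain $(S^{\ot n})^*=(S^*)^{\ot n}$, and then invoke that the adjoint of a symmetric-part compression is the symmetric part of the adjoint (the two-isometry analogue of the final clause of Lemma~\ref{Lemma 1}(c), proved by the same argument since $\Ran V_n,\Ran V'_n$ are left invariant by $S^{\ot n}$ and $(S^*)^{\ot n}$); thus $(S^{(n)})^*=(S^*)^{(n)}$ and $\Gamma(S)^*=\bigoplus_n(S^{(n)})^*=\Gamma(S^*)$.

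The multiplicativity statements (v) and (vi) reduce, via the same invariance (which gives $R^{(n)}C^{(n)}={V'_n}^* R^{\ot n}C^{\ot n}V_n$ because $C^{\ot n}V_n$ already has symmetric range), to the corresponding facts for tensor powers. For (v) one checks $(RC)^{\ot n}\subset R^{\ot n}C^{\ot n}$: the two sides agree on elementary tensors of $\Dom RC$, and $R^{\ot n}C^{\ot n}$ is closed (a closed operator times a bounded everywhere-defined operator, Lemma~\ref{Lemma 1}(a)(i)), hence contains $\ol{(RC)^{\otul n}}=(RC)^{\ot n}$. For (vi) the inclusion is upgraded to equality by Proposition~\ref{Lemma 3}(b): iterating that identity with all factors equal yields $(TC)^{\ot n}=T^{\ot n}C^{\ot n}$ precisely when $TC$ is densely defined, and compressing gives $(TC)^{(n)}=T^{(n)}C^{(n)}$, so $\Gamma(TC)=\Gamma(T)\Gamma(C)$ after noting that the orthogonal sum of the products is the product of the orthogonal sums.

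The substantive part is (ii), and here I would proceed by a two-step reduction. First, using the continuity of $u\mapsto\ve(u)$ together with $\Gamma(R)\ve(u)=\ve(Ru)$ for $u\in\Dom R$, any graph-convergent sequence $c_k\to c$, $Rc_k\to Rc$ in $\core$ gives $\ve(c_k)\to\ve(c)$ and $\Gamma(R)\ve(c_k)\to\Gamma(R)\ve(c)$; hence $\Exps(\core)$ is graph-dense in $\Exps(\Dom R)$ whenever $\core$ is a core for $R$, and it suffices to treat $\core=\Dom R$. Second, I would show $\Exps(\Dom R)$ is a core for $\Gamma(R)$ by locating a known core inside its graph-closure: extracting finite-particle components from $t\mapsto\ve(tc)$ (whose $\Gamma(R)$-images $\ve(tRc)$ are smooth in $t$) places each $c^{\ot n}$, $c\in\Dom R$, in the graph-closure, so the algebraic orthogonal sum of the symmetric algebraic tensor powers $\Lin\{c^{\ot n}:c\in\Dom R\}$ lies there. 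These symmetric tensor powers are cores for the $R^{(n)}$, which follows from Lemma~\ref{Lemma 2} (the algebraic power $(\Dom R)^{\otul n}$ is a core for $R^{\ot n}=\ol{R^{\otul n}}$) followed by the symmetrising compression $\xi\mapsto V_n^*\xi$, and the orthogonal-sum core property then identifies their algebraic sum as a core for $\Gamma(R)$. I expect this last assertion, that $\Exps(\Dom R)$ captures a full core of the orthogonal sum, to be the main obstacle, since it must reconcile the particle-number grading (where one argues by differentiating exponentials) with the graph norm of the unbounded $\Gamma(R)$; the remaining parts are routine applications of the appendix lemmas.
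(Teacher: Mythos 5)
Your proposal is correct, but it routes the two substantive items, (ii) and (v), differently from the paper. The paper also disposes of (i), (iii) and (iv) componentwise via the listed orthogonal-sum properties (you are in fact more careful than the paper on (iii), where the interchange of adjoint with the symmetrising compression is needed and glossed over there). The divergence: the paper proves (ii) in one stroke from the density of $\Exps(\core)$ in $\Gamma(\hil^+)$, where $\hil^+$ is $\Dom R$ in the graph norm --- totality of exponentials over a dense subspace of the Hilbert space $\hil^+$, combined with an implicit contractive comparison of the $\Gamma(\hil^+)$-norm with the graph norm of $\Gamma(R)$ (in essence $(I+R^*R)^{\ot n}\geq I+(R^*R)^{\ot n}$) --- and then \emph{deduces} (v) from (ii), since $\Gamma(RC)_| \subset \Gamma(R)\Gamma(C)$ is obvious on exponential vectors, the right-hand side is closed by Lemma~\ref{Lemma 1}(a), and (ii) applied to the closed operator $RC$ makes $\Exps(\Dom RC)$ a core for the left-hand side; (vi) then follows from (v) together with $(TC)^{\ot n}=T^{\ot n}C^{\ot n}$ via Proposition~\ref{Lemma 3}(b) --- exactly the identity you iterate. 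You invert this dependence: (v) and (vi) are proved directly at the $n$-particle level (using closedness of $R^{\ot n}C^{\ot n}$ and the symmetric-range observation $V_nV_n^*C^{\ot n}V_n^{(0)}=C^{\ot n}V_n^{(0)}$), and (ii) is proved bottom-up by extracting finite-particle components from the curves $t\mapsto\ve(tc)$, showing the symmetrised algebraic powers are cores for the $R^{(n)}$ via the two-isometry analogue of Lemma~\ref{Lemma 1}(c), and invoking the orthogonal-sum core property. Your route is longer but more self-contained: it makes explicit the permutation-intertwining and compression arguments the paper's one-line justifications conceal, and it decouples (v)/(vi) from (ii). Two details you leave as sketches are genuinely routine and do close: the circle-average (or Vandermonde) extraction of $c^{\ot n}$ works because the graph-closure of $\Lin\Exps(\Dom R)$ is a closed subspace of $\Gamma(\hil_1)\oplus\Gamma(\hil_2)$ and both components of the integrand are norm-continuous in $t$; and identifying $\Lin\{c^{\ot n}:c\in\Dom R\}$ with $V_n^*\big((\Dom R)^{\otul n}\big)$ requires the standard polarisation identity for symmetric tensors, which you should cite explicitly since the compression lemma delivers the latter set as the core.
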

\begin{proof}
 (i), (iii) and (iv) follow easily from
 the elementary properties of orthogonal sums of operators listed above.
 (ii)
 follows from the fact that
 $\Exps(\core)$ is dense in
 $\Gamma(\hil^+)$, where
 $\hil^+$ denotes $\Dom R$ in the graph norm of $R$,
 and this in turn implies (v),
 in view of the obvious inclusion
 \[
 \Gamma(RC)_| \subset \Gamma(R) \Gamma(C),
 \]
 and the closedness of the RHS
 (by Part (a) of Lemma~\ref{Lemma 1}).

 (vi) follows from (v) and the fact that
 $T^{\ot n} C^{\ot n} = (TC)^{\ot n}$ ($n\in\Nat$),
 cf. Part (b) of Proposition~\ref{Lemma 3}.
\end{proof}

\begin{rem}
 For an everywhere-defined contraction operator $C$,
 $\Gamma(C)$ is
 known as the \emph{second quantisation} of $C$
 (\cite{cook} see, for example, \cite{ReS}).
\end{rem}

 We also need to consider conjugate-linear operators,
 including the Tomita-Takesaki operators associated with
 a von Nemann algebra with cyclic and separating vector.
 Thus, for a conjugate linear operator $T$ from $\Hil_1$ to $\Hil_2$
 with domain $\domain$, its \emph{adjoint} is
 the conjugate-linear operator from $\Hil_2$ to $\Hil_1$
 defined as follows:
 \begin{align*}
 &\Dom T^* :=
 \big\{
 x \in \hil': \text{ the linear functional } u \in \domain \to
 \ip{Tu}{x} \text{ is bounded}
 \big\}
 \\
 &\ip{T^*x}{u} = \ip{Tu}{x}
 \quad
 (u\in\domain, x \in \Dom T^*).
 \end{align*}
 In terms of any antiunitary operator $J: \Hil_2 \to \Hil_1$,
 \[
 T^* = J (TJ)^*,
 \]
 $TJ$ being a \emph{linear} operator with domain $J^{-1} \domain$.
 Compositions, orthogonal sums and tensor products of conjugate-linear
 operators enjoy
 corresponding properties to those of their linear sisters listed
 above.
 Thus, for closable conjugate-linear operators $T$, $T_1$ and $T_2$,
 $T_1 \otul T_2$ is closable and its closure is denoted $T_1 \ot
 T_2$, and $\Gamma(T)$ enjoys the properties listed in
 Proposition~\ref{second quantisation}.

 \begin{caution}
 If $T_1$ is a linear operator and $T_2$ a conjugate-linear operator
 then
 (except in the trivial case where one is a zero operator)
 $T_1 \otul T_2$  makes no sense, let alone $T_1 \ot T_2$.
 \end{caution}

\noindent \emph{ACKNOWLEDGEMENTS.}
 This work was supported by the UKIERI Research Collaboration Network grant
 \emph{Quantum Probability, Noncommutative Geometry \& Quantum Information}.

\end{document}